\numberwithin{equation}{section}
\newcommand{\N}{\mathbb{N}}
\newcommand{\D}{\mathbb{D}}
\newcommand{\Z}{\mathbb{Z}}
\newcommand{\R}{\mathbb{R}}
\newcommand{\cM}{\mathcal{M}}
\newcommand{\cD}{\mathcal{D}}
\newcommand{\cF}{\mathcal{F}}
\newcommand{\cS}{\mathcal{S}}
\newcommand{\cH}{\mathcal{H}}
\newcommand{\cB}{\mathcal{B}}
\newcommand{\cU}{\mathcal{U}}
\def\restrict#1{\raise-.5ex\hbox{\ensuremath|}_{#1}}
\newcommand\restr[2]{{
  \left.\kern-\nulldelimiterspace 
  #1 
  \vphantom{\big|} 
  \right|_{#2} 
  }}
\newtheorem{theorem}{Theorem}[section]
\newtheorem{lemma}[theorem]{Lemma}
\newtheorem{proposition}[theorem]{Proposition}
\newtheorem{remark}[theorem]{Remark}
\newtheorem{definition}[theorem]{Definition}
\begin{document}

\begin{frontmatter}

\title{The density of the $(\alpha,\beta)$-superprocess and singular solutions to a fractional non-linear PDE}

\begin{aug}
\author{\fnms{\hspace{4 mm} Thomas}
  \snm{Hughes}\thanksref{t1}\corref{}\ead[label=e1]{}} 
\ead[label=e2]{hughes@math.ubc.ca}
\thankstext{t1}{Supported by an NSERC CGS-D Scholarship and Li Tze Fong Memorial Fellowship.} 
\affiliation{
The University of British Columbia\thanksmark{t1} \thanksmark{t2}}

\address{Department of Mathematics, University of British Columbia; 
\printead{e2}}

\end{aug}
\begin{abstract}
We consider the density $X_t(x)$ of the critical $(\alpha,\beta)$-superprocess in $\R^d$ with $\alpha \in (0,2)$ and $\beta < \frac \alpha d$. Our starting point is a recent result from PDE \cite{CVW2016} which implies the following dichotomy: if $x \in \R^d$ is fixed and $\beta \leq \beta^*(\alpha) := \frac{\alpha}{d+\alpha}$, then $X_t(x) > 0$ a.s. on $\{X_t \neq 0 \}$; otherwise, the probability that $X_t(x)$ is positive when conditioned on $\{X_t \neq 0 \}$ has power law decay. We strengthen this and prove probabilistically that if $\beta< \beta^*(\alpha)$ and the density is continuous, which holds if and only if $d=1$ and $\alpha > 1+\beta$, then $X_t(x) > 0$ for all $x \in \R$ a.s. on $\{X_t \neq 0\}$.

The above complements a classical superprocess result that if $X_t$ is non-zero, then it charges every open set almost surely. We unify and extend these results by giving close to sharp conditions on a measure $\mu$ such that $\mu(X_t) := \int X_t(x) \mu(dx) > 0$ a.s. on $\{X_t \neq 0 \}$. Our characterization is based on the size of $\text{supp}(\mu)$, in the sense of Hausdorff measure and dimension. For $s \in [0,d]$, if $\beta \leq \beta^*(\alpha,s) = \frac{\alpha}{d-s+\alpha}$ and $\text{supp}(\mu)$ has positive $x^s$-Hausdorff measure, then $\mu(X_t) >0$ a.s. on $\{X_t \neq 0\}$; and when $\beta > \beta^*(\alpha,s)$, if $\mu$ satisfies a uniform lower density condition which implies $\text{dim}(\text{supp}(\mu)) < s$, then $P(\mu(X_t) = 0 \, | \, X_t \neq 0 ) > 0$.

Our methods also give new results for the fractional PDE which is dual to the $(\alpha,\beta)$-superprocess, i.e.
\[ \partial_t u(t,x) = \Delta_\alpha u(t,x) -  u(t,x)^{1+\beta}\]
with domain $(t,x) \in (0,\infty) \times \R^d$, where $\Delta_\alpha = -(-\Delta)^{\frac \alpha 2}$ is the fractional Laplacian. The initial trace of a solution $u_t(x)$ (see \cite{CV2019}) is a pair $(\cS,\nu)$, where the singular set $\cS$ is a closed set around which local integrals of $u_t(x)$ diverge as $t \downarrow 0$, and $\nu$ is a Radon measure which gives the limiting behaviour of $u_t(x)$ on $\cS^c$ as $t \downarrow 0$. For $\beta < \frac \alpha d$ we characterize the problem of existence of solutions with initial trace $(\cS,0)$ in terms of a parameter called the saturation dimension, $d_{\text{sat}} = d + \alpha(1 - \beta^{-1})$. For $\cS \neq \R^d$ with $\text{dim}(\cS) > d_{\text{sat}}$ (and in some cases with $\text{dim}(\cS) = d_{\text{sat}}$) we prove that no such solution exists. When $\text{dim}(\cS) < d_{\text{sat}}$ and $\cS$ is the compact support of a measure satisfying a uniform lower density condition, we prove that a solution exists.
\end{abstract}

\begin{keyword}[class=MSC]
\kwd{60J68, 35K55, 35R11}
\end{keyword}
 
\begin{keyword}
\kwd{Superprocess densities}
\kwd{stable branching}
\kwd{fractional semilinear pde}
\kwd{initial trace}
\end{keyword}


\end{frontmatter}

\pagestyle{plain}
\footskip=25pt

\section{Introduction and statement of results}
In this work we study some path properties of the $(\alpha,\beta,d)$-superprocess. For parameters $\alpha \in (0,2)$, $\beta \in(0,1]$, $d \in \N$, the $(\alpha,\beta,d)$-superprocess, or simply the $(\alpha,\beta)$-superprocess when the dimension is fixed, is a strong Markov process taking values in $\cM_F(\R^d)$, the space of finite measures on $\R^d$ equipped with the topology of weak convergence. We will denote it by $X = (X_t : t \geq 0)$, so that $X_t \in \cM_F(\R^d)$. The spatial Markov process associated to $X$ is a symmetric $\alpha$-stable process in $\R^d$ and, for $\beta \in (0,1)$, the branching mechanism is that of a continuous state branching process with $(1+\beta)$-stable branching. When $\beta = 1$ the superprocess is binary branching and the associated continuous state branching process is Feller's branching diffusion.

The paths of $X$ live in $\D([0,\infty),\cM_F(\R^d))$, the space of c\`adl\`ag paths in $\cM_F(\R^d)$. The Markov transition kernel of $X$ is defined by its Laplace functional, which is characterized via a dual relationship with a fractional non-linear evolution equation. For a general probability space realizing $X$ with initial value $X_0 \in \cM_F(\R^d)$, we will write $(\Omega, \cF, P^X_{X_0})$, and denote the associated expectation by $E^X_{X_0}$. Let $\cB^+_b(\R^d) = \cB^+_b$ denote the space of bounded, measurable functions on $\R^d$. Then for every $\phi \in \cB^+_b$,
\begin{equation} \label{e_Lap}
E^X_{X_0}(\exp(-X_t(\phi))) = \exp (-X_0(u^\phi_t)),
\end{equation}
where $X_t(\phi) = \int \phi(x) X_t(dx)$, and $u^\phi_t(x)$ is the unique solution of the evolution equation
\begin{equation} \label{e_evol}
u_t(x) = S_t \phi (x) - \int_0^t S_{t-s}(u^{1+\beta}_s)(x) \,ds
\end{equation}
for $(t,x) \in Q := (0,\infty) \times \R^d$. $(S_t)_{t\geq0}$ denotes the transition semigroup of the isotropic (symmetric) $\alpha$-stable process in $\R^d$. (See Theorem 4.4.1 of \cite{Dawson} for the existence and uniqueness of solutions to \eqref{e_evol} and the derivation of \eqref{e_Lap}.) The generator of the $\alpha$-stable process is the fractional Laplacian $\Delta_\alpha = -(-\Delta)^{\frac \alpha 2}$. We will use the probabilistic convention, so that $\Delta_\alpha$ corresponds to the $\alpha$-stable process; the actual ``fraction" associated to this operator, and the parameter commonly used in the partial differential equation (PDE) literature, is then $\frac \alpha 2 \in (0,1)$. In the singular integral formulation, $\Delta_\alpha$ is defined as
\begin{equation}
\Delta_\alpha f(x) := \lim_{\epsilon \to 0}  -a_{\alpha,d}\int_{\R^d} \frac{f(x) - f(y)}{|x-y|^{d+ \alpha}} \chi_\epsilon(|x-y|)\,dy \nonumber
\end{equation}
for a constant $a_{\alpha,d} > 0$, where
\begin{equation}
\chi_\epsilon(r) = \begin{cases}1 &\text{ if } r > \epsilon, \\ 0 &\text{ if } r \in [0,\epsilon]. \end{cases} \nonumber
\end{equation}
The integral equation \eqref{e_evol} corresponds to the fractional PDE
\begin{equation} \label{e_pde}
\partial_t u = \Delta_\alpha u - u^{1+\beta}.
\end{equation}
When they exist, solutions of \eqref{e_pde} and \eqref{e_evol} generally coincide. In Section~\ref{s_duality} we define weak solutions to \eqref{e_pde} (see Definition~\ref{def_weaksol_IVP}) and make this correspondence rigorous in certain cases. In addition to studying solutions of \eqref{e_pde} (or \eqref{e_evol}) as a means of proving properties of $X$, we also prove novel results concerning the existence and non-existence of solutions to \eqref{e_pde} with very singular initial conditions. 

We are interested in the density of $X_t$. It is a classical result of Fleischmann \cite{F1988} that $X_t$ is absolutely continuous if and only if $\beta < \frac \alpha d$. This work is concerned only with absolutely continuous case, and we restrict to it now.\\

\noindent \textbf{Assumption.} For the remainder of this work, we assume $\beta < \frac \alpha  d$.\\

Under this assumption, $X_t$ has a density $X_t(x)$, so that $X_t(dx) = X_t(x) dx$. A priori, a generic density is only defined up to Lebesgue-null sets. Much of this work concerns the behaviour of the density on such sets, so this is not sufficient. In particular, we need the object
\begin{equation} \label{e_muXt}
\mu(X_t) := \int_{\R^d} X_t(x) \mu(dx)
\end{equation}
to be well-defined for $\mu \in \cM_F(\R^d)$. Let $B(x,r)$ denote the closed ball of radius $r>0$ around $x \in \R^d$. We define
\begin{equation} \label{e_Xepsilon}
X^\epsilon_t(x) = \frac{X_t(B(x,\epsilon))}{|B(x,\epsilon)|},
\end{equation}
where $|A|$ denotes the Lebesgue measure of $A \subset \R^d$. By the Lebesgue Differentiation Theorem, $X^\epsilon_t(x)$ converges as $\epsilon \downarrow 0$ for Lebesgue-a.e. $x \in \R^d$, and consequently
\begin{equation} \label{e_density_liminf}
X_t(x) := \liminf_{\epsilon \downarrow 0} X^\epsilon_t(x)
\end{equation}
is a density for $X_t$. In fact, more holds.
\begin{lemma} For fixed $x \in \R^d$, $X^\epsilon_t(x) \to X_t(x)$ $P^X_{X_0}$-a.s as $\epsilon \downarrow 0$. Moreover, for $\mu \in \cM_F(\R^d)$, $X^\epsilon_t(x) \to X_t(x)$ for $\mu$-a.e. $x$ almost surely and $\mu(X^\epsilon_t) \to \mu(X_t)$ in $L^1(P^X_{X_0})$ as $\epsilon \downarrow 0$.
\end{lemma}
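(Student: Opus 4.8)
The plan is to separate the statement into three parts: (i) the fixed-$x$ claim, (ii) the $\mu$-a.e.\ claim, and (iii) the $L^1(P^X_{X_0})$ convergence of $\mu(X^\epsilon_t)$; I expect (ii) and (iii) to follow from (i) by soft measure-theoretic arguments, while (i) is the real content. Throughout I use the first moment identity $E^X_{X_0}[X_t(\phi)] = X_0(S_t\phi)$ for $\phi \in \cB^+_b$, which follows by differentiating \eqref{e_Lap} along $\phi \mapsto \theta\phi$ at $\theta = 0$ and reading off $u^{\theta\phi}_t = \theta S_t\phi + o(\theta)$ from \eqref{e_evol}; together with the Markov property of $X$ it gives $E^X_{X_0}[X_t(\phi) \mid \cF_s] = X_s(S_{t-s}\phi)$, where $(\cF_s)$ denotes the filtration generated by $X$. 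Write $p_t(x,y)$ for the (symmetric) $\alpha$-stable transition density and $h_t(x) := \int p_t(x,y)\,X_0(dy)$, a bounded continuous function; I will establish (i) in the strong form that $X^\epsilon_t(x) \to X_t(x)$ both $P^X_{X_0}$-a.s.\ and in $L^1(P^X_{X_0})$, with $E^X_{X_0}[X_t(x)] = h_t(x)$.

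For (i), set $g_\epsilon := 1_{B(x,\epsilon)}/|B(x,\epsilon)|$, so $X^\epsilon_t(x) = X_t(g_\epsilon)$. The mean is easy: $E^X_{X_0}[X^\epsilon_t(x)] = X_0(S_t g_\epsilon) = |B(x,\epsilon)|^{-1}\int_{B(x,\epsilon)} h_t(w)\,dw \to h_t(x)$ as $\epsilon \downarrow 0$ by continuity of $h_t$, and this quantity is bounded by $\|p_t(0,\cdot)\|_\infty\|X_0\|$. For the a.s.\ convergence I would introduce the density martingale $M_s := X_s\big(p_{t-s}(x,\cdot)\big)$, $s \in [0,t)$; it is a martingale, since $E^X_{X_0}[M_{s'} \mid \cF_s] = X_s\big(S_{s'-s}p_{t-s'}(x,\cdot)\big) = X_s\big(p_{t-s}(x,\cdot)\big) = M_s$ by Chapman--Kolmogorov, and it is bounded in $L^p(P^X_{X_0})$ for any $p \in (1,1+\beta)$: the $p$-th moment bound for the $(1+\beta)$-stable branching superprocess controls $E^X_{X_0}[X_s(\psi)^p]$ by a constant times $(X_0(S_s\psi))^p + \big(\int_0^s \int (S_{s-r}\psi(y))^{1+\beta} h_r(y)\,dy\,dr\big)^{p/(1+\beta)}$, and for $\psi = p_{t-s}(x,\cdot)$ one has $S_{s-r}\psi = p_{t-r}(x,\cdot)$ and $\int (p_{t-r}(x,y))^{1+\beta} h_r(y)\,dy \le \|p_{t-r}(0,\cdot)\|_\infty^\beta\,(S_{t-r}h_r)(x) = \|p_{t-r}(0,\cdot)\|_\infty^\beta\, h_t(x) \lesssim (t-r)^{-d\beta/\alpha}h_t(x)$, whose integral over $r \in (0,t)$ is finite precisely because $\beta < \alpha/d$ — this is exactly where the standing assumption enters. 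Hence $(M_s)_{s<t}$ converges $P^X_{X_0}$-a.s.\ and in $L^1$ to a limit $M_{t-}$. The remaining, and most delicate, step is to show $X^\epsilon_t(x) \to M_{t-}$ $P^X_{X_0}$-a.s.\ as $\epsilon \downarrow 0$; the approach I would take is to bound $E^X_{X_0}\big|X^\epsilon_t(x) - X_s(S_{t-s}g_\epsilon)\big|^p$ using the $(1+\beta)$-moment formula \emph{conditionally given $\cF_s$} (the conditional law of $X_t$ given $\cF_s$ being that of the superprocess run for time $t-s$ from $X_s$), then let $s \uparrow t$ and $\epsilon \downarrow 0$ in a coupled fashion, using $X_s(S_{t-s}g_\epsilon) \to M_s$ and $M_s \to M_{t-}$; this a.s.\ identity at a fixed point can alternatively be quoted from the literature on densities of stable-branching superprocesses. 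Once this is in hand, the uniform bound $\sup_{\epsilon\le 1} E^X_{X_0}[X^\epsilon_t(x)^p] < \infty$ (the same moment estimate with $\psi = g_\epsilon$, again finite uniformly in $\epsilon$ because $\beta < \alpha/d$) upgrades a.s.\ convergence to $L^1$ convergence, and then $E^X_{X_0}[X_t(x)] = \lim_\epsilon E^X_{X_0}[X^\epsilon_t(x)] = h_t(x)$; note $X_t(x) = \liminf_{\epsilon\downarrow 0} X^\epsilon_t(x)$ by \eqref{e_density_liminf} identifies the limit automatically.

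Part (ii) then follows by Fubini. The map $(\omega,x) \mapsto X^\epsilon_t(x)(\omega)$ is jointly measurable (since $X_t$ is $\cM_F(\R^d)$-valued measurably and $\epsilon \mapsto X_t(B(x,\epsilon))$ is right-continuous and nondecreasing), so $A := \{(\omega,x) : X^\epsilon_t(x)(\omega) \not\to X_t(x)(\omega)\}$ is measurable; by (i) the $x$-section $A^x$ has $P^X_{X_0}(A^x) = 0$ for \emph{every} $x$, so Tonelli gives $E^X_{X_0}\big[\mu\{x : (\omega,x) \in A\}\big] = \int P^X_{X_0}(A^x)\,\mu(dx) = 0$, i.e.\ a.s.\ $X^\epsilon_t(x) \to X_t(x)$ for $\mu$-a.e.\ $x$. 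For part (iii), put $Z_\epsilon := \mu(X^\epsilon_t) \ge 0$ and $Z := \mu(X_t)$. By \eqref{e_density_liminf} and Fatou's lemma applied to the integral against $\mu$, $\liminf_{\epsilon\downarrow 0} Z_\epsilon \ge \int \liminf_{\epsilon\downarrow 0} X^\epsilon_t(z)\,\mu(dz) = Z$ a.s. Next, $E^X_{X_0}[Z_\epsilon] = \int |B(z,\epsilon)|^{-1}\int_{B(z,\epsilon)} h_t(w)\,dw\,\mu(dz) \to \int h_t(z)\,\mu(dz)$ by dominated convergence ($\mu$ finite, integrand bounded by $\|p_t(0,\cdot)\|_\infty\|X_0\|$), and by (i) the limit equals $\int E^X_{X_0}[X_t(z)]\,\mu(dz) = E^X_{X_0}[Z] < \infty$. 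Finally $(Z - Z_\epsilon)^+ \le Z \in L^1$ and $\limsup_{\epsilon\downarrow 0}(Z - Z_\epsilon)^+ = \big(Z - \liminf_{\epsilon\downarrow 0} Z_\epsilon\big)^+ = 0$ a.s., so dominated convergence gives $E^X_{X_0}[(Z - Z_\epsilon)^+] \to 0$; combining, $E^X_{X_0}|Z_\epsilon - Z| = \big(E^X_{X_0}[Z_\epsilon] - E^X_{X_0}[Z]\big) + 2\,E^X_{X_0}[(Z - Z_\epsilon)^+] \to 0$.

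The main obstacle is the a.s.\ (as opposed to in-probability or distributional) convergence $X^\epsilon_t(x) \to X_t(x)$ at a fixed $x$ — concretely, the identification of $\lim_\epsilon X^\epsilon_t(x)$ with the density-martingale limit $M_{t-}$. Because $\beta < 1$ rules out second moments, one must work with $p$-th moments for $p < 1+\beta$, and — crucially — the increments $X^{\epsilon'}_t(x) - X^\epsilon_t(x)$ need not go to zero in $L^p$ (the density is genuinely irregular), so this cannot be reduced to a naive Cauchy-in-$L^p$ estimate and genuinely needs the martingale structure (or an imported density-regularity result). Everything else — the two moment/integrability computations (both governed by $\beta < \alpha/d$), the Fubini step, and the soft $L^1$ argument — is routine.
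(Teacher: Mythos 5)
Your parts (ii) and (iii) are sound: the Fubini/Tonelli step is exactly what is needed, and your Scheff\'e-type argument for the $L^1$ convergence of $\mu(X^\epsilon_t)$ (convergence of means plus $\liminf_\epsilon Z_\epsilon \geq Z$ a.s.\ plus dominated convergence on $(Z-Z_\epsilon)^+$) is a legitimate alternative to the paper's route, which instead establishes uniform integrability of $(\omega,x)\mapsto X_t(\psi^x_\epsilon)$ with respect to $P^X_{X_0}\otimes\mu$ using the same $L^{1+\theta}$ moment bound you invoke. The moment computation itself, with the $s^{-d\beta/\alpha}$ integrability hinging on $\beta<\alpha/d$, matches the paper's use of Lemma~\ref{lemma_Lp_bound}.

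The genuine gap is in part (i), which you correctly identify as the real content and then do not close. You construct the density martingale $M_s=X_s(p_{t-s}(x,\cdot))$ and show it converges a.s., but the identification $\lim_{\epsilon\downarrow 0}X^\epsilon_t(x)=M_{t-}$ a.s.\ is exactly the hard point, and your proposed route --- bounding $E|X^\epsilon_t(x)-X_s(S_{t-s}g_\epsilon)|^p$ and sending $s\uparrow t$, $\epsilon\downarrow 0$ ``in a coupled fashion'' --- only yields convergence in probability (or along subsequences), not a.s.\ convergence of the full family as $\epsilon\downarrow 0$; you note yourself that a Cauchy-in-$L^p$ argument fails because the increments $X^{\epsilon'}_t(x)-X^\epsilon_t(x)$ need not be small. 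Falling back on ``quote it from the literature'' concedes the step. The paper avoids all of this with a much softer argument: by the Evans--Perkins / Li--Zhou result, the laws of $X_t$ under $P^X_{X_0}$ and $P^X_{X_0+x}$ are mutually absolutely continuous, and by translation invariance this means the laws of $X_t$ and $X_t+x$ are mutually absolutely continuous under $P^X_{X_0}$ (see \eqref{mutual_absolute_continuity_translation}). Hence the probability that $X^\epsilon_t(x_0)$ fails to converge is either zero for every $x_0$ or positive for every $x_0$; in the latter case Fubini would force the non-convergence set to have positive Lebesgue measure with positive probability, contradicting the Lebesgue differentiation theorem applied to the a.s.\ absolutely continuous measure $X_t$. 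This absolute-continuity-plus-Fubini transfer is the missing idea; with it, no martingale construction or limit identification is needed for the a.s.\ statement, and the moment bound is used only for the $L^1$ upgrades.
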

The above is in fact an abridged version of Lemma~\ref{lemma_density_convergence}, which is proved in Section~\ref{s_density}. Alongside this, as we discuss in Section~\ref{s_duality}, the evolution equation has a unique solution with initial condition given by a finite measure. Consequently, both sides of \eqref{e_Lap} are well-defined when $\phi$ is replaced with a finite measure. The measure case can then be summarized as follows: (see Lemma~\ref{lemma_measure_duality} for a precise statement) for $\mu \in \cM_F(\R^d)$, there is a unique solution $u^\mu_t(x)$ to \eqref{e_pde} on $Q$ such that $u^\mu_t \to \mu$ weakly in the sense of measures as $t \downarrow 0$. The solution $u^\mu_t$ satisfies
\begin{equation} \label{e_Lapmeasure}
E^X_{X_0}(\exp(-\mu(X_t))) = \exp (-X_0(u^\mu_t)).
\end{equation}
In the above, $\mu(X_t)$ is defined by \eqref{e_muXt} with the density $X_t(x)$ from \eqref{e_density_liminf}.

Another approach to specifying a canonical version of the density is via a Green's function representation for $X_t$, which is the method used by Fleischmann, Mytnik and Wachtel in \cite{FMW2010} and other works. This version of the density is given by
\begin{equation} \label{e_density_GFR}
X_t(x) = X_0 * p_t (x) + \int_{(0,t] \times \R^d} p_{t-s}(y-x) \, M(d(s,y)).
\end{equation}
In the above, $p_t$ is the transition density of the symmetric $\alpha$-stable process (see Section~\ref{s_transitiondensities}) and the measure $M$ is a compensated stable martingale measure associated to $X$. Although we do not show it here, the density above will agree with the version we use.

The canonical measure associated with the $(\alpha,\beta)$-superprocess, which we denote $\N_0$, is a \linebreak$\sigma$-finite measure supported on $\D([0,\infty),\cM_F(\R^d))$, the space of c\`adl\`ag $\cM_F(\R^d)$-valued paths. In the construction of superprocesses as scaling limits of discrete spatial branching models, one takes the number of individuals in the population to infinity while their masses are simultaneously scaled to $0$. The canonical measure $\N_0$ is then the ``law" of the superprocess when started with a single (infinitessimal) ancestor at the origin. Likewise, $\N_x$ describes the superprocess descending from an ancestor located at $x \in \R^d$. In Section~\ref{s_cluster} we describe the relationship between canonical measure and the superprocess with law $P^X_{X_0}$. The following formula will be used quite frequently: for any $x \in  \R^d$, for  $t>0$,
\begin{equation} \label{e_canon_survive}
\N_x( X_t \neq 0 ) = U_t := \left( \frac{1}{\beta t} \right)^{\frac 1 \beta}.
\end{equation}
(This is shown, for example, in (5.4.2) of \cite{DawsonInfDiv1991}.) We remark that $U_t$ is the maximal solution on $(0,\infty)$ to the ODE $u' = - u^{1+\beta}$. By \eqref{e_canon_survive}, when considering $X_t$ under $\N_x$ for some fixed $t>0$, one is essentially working with a finite measure. This formula is a consequence of the fact that the canonical measure also shares a close relationship with the dual evolution equation. If $u^\phi_t(x)$ is as in \eqref{e_evol} for $\phi \in \cB^+_b$, then
\begin{equation} \label{e_Lapcanon}
\N_x( 1 - \exp(-(X_t,\phi))) = u^\phi_t(x).
\end{equation}
As with $P_{X_0}^X$, the above relationship can be generalized to include measures when $X_t$ has a density. If $u^\mu_t$ is the unique solution to \eqref{e_pde} with initial condition $\mu \in \cM_F(\R^d)$, (see Lemma~\ref{lemma_measure_duality} for details) then
\begin{equation} \label{e_Lapcanonmeasure}
\N_x(1 - \exp(-\mu(X_t))) = u^\mu_t(x).
\end{equation}

We now motivate our results with the statement of two theorems. The first is a fundamental result about superprocesses associated to $\alpha$-stable spatial motions. The result is due to Perkins and Evans, whose proof of the $\beta = 1$ case appeared in \cite{EP1991}. The proof for $\beta < 1$ appears in a more recent work of Li and Zhou \cite{LZ2008}. We define $\text{supp}(\mu)$ to be the closed support of $\mu \in \cM_F(\R^d)$ and will often refer to it as the support of $\mu$. \\

\noindent \textbf{Theorem A.} [Evans, Perkins (1991); Li, Zhou (2008)] \emph{For $X_0 \in \cM_F(\R^d)$ and $t>0$, 
\[P_{X_0}^X(\text{supp}(X_t) = \R^d \text{ or } \emptyset) = 1.\]
Similarly, $\N_0( \text{supp}(X_t) = \R^d \, | \, X_t \neq 0 ) = 1$.}\\

The superprocesses we study are critical and therefore go extinct almost surely. That is, \linebreak$P_{X_0}^X(X_t \neq 0 \, \text{ for all } t>0) = 0$. (This is analogous to the almost sure extinction of critical branching processes, whose spatial analogues have scaling limits given by superprocesses.) Thus, the above statement can be understood to say that, conditioned on non-extinction at time $t>0$, (i.e. conditioned on $\{X_t \neq 0\}$) $\text{supp}(X_t) = \R^d$ a.s.

Theorem A is sometimes called \textit{instantaneous propagation.} This is because, regardless of the choice of initial measure, $X_t$ has mass ``everywhere" in $\R^d$. So for $X_0 = \delta_x$, varying $x$ over $\R^d$ has no influence on the support of $X_t$, which is $\R^d$ almost surely on $\{X_t \neq 0 \}$ for any choice of $x$. The condition $\text{supp}(X_t) = \R^d$ is equivalent to: $X_t(U) > 0$ for every open set $U \subset \R^d$, where $X_t(U) = (X_t,1_U)$ and $1_U$ is the indicator function of $U$. So for open $U$, $X_t > 0$ implies $X_t(U)>0$ almost surely. For $\lambda>0$, consider now $u^{\lambda 1_U}$, which is the solution to
\[\partial_t u = \Delta_\alpha u -  u^{1+\beta}, \,\, u_0 = \lambda 1_U. \]
By \eqref{e_Lapcanon},
\[\N_x(1-\exp(-\lambda X_t(U))) = -u^{\lambda 1_U}_t(x).\]
Taking $\lambda \to \infty$ with standard limiting arguments, we observe that
\[\lim_{\lambda \to \infty} \N_x(1-\exp(-\lambda X_t(U))) = \N_x(X_t(U) > 0).\]
By instantaneous propagation, the right hand side is equal to $\N_x(X_t \neq 0)$, which, by translation invariance, is equal for any choice of $x$. The implication is that
\[\lim_{\lambda \to \infty} u^{\lambda 1_U}_t(x) = \N_0(X_t \neq 0)\]
for all $x \in \R^d$. Hence $u^{\infty 1_U}_t = \lim_{\lambda \to \infty} u^{\lambda 1_U}_t$ is constant in space for each $t>0$. In particular, by \eqref{e_canon_survive}, for all $x \in \R^d$ we have $u^{\infty 1_U}_t(x) = U_t$.

\begin{remark} Using the same argument, it follows that $\N_0(X_t(\phi) > 0 ) = U_t$ for any measurable function $\phi \geq 0$ which is positive on a set of positive Lebesgue measure. In particular, $\lim_{\lambda \to \infty} u^{\lambda \phi}_t = U_t$ for any such function. These probabilistic results, which we believe are not widely known in the PDE community, imply the non-existence of positive solutions to \eqref{e_pde} whose singular sets (see \eqref{e_inittrace_singular}) have positive Lebesgue measure.
\end{remark}

The other result motivating ours is a recent result in the PDE literature which, to our knowledge, is not yet widely known in the probability community. It is due to Chen, V\'eron, and Wang \cite{CVW2016}. We first give the result originally stated for PDE, then interpret probabilistically. Let $u^{\lambda}_t$ denote the solution to \eqref{e_pde} with initial data $\lambda \delta_0$ and let $u^\infty_t = \lim_{\lambda \to \infty} u^{\lambda}_t$. 

Let $\beta^*(\alpha) = \frac{\alpha}{d+\alpha}$. We will generally view $d$ as fixed and therefore omit the dependence of $\beta^*(\alpha)$ on $d$. \\

\noindent \textbf{Theorem B.} [Chen, V\'eron, Wang (2017); Chen, V\'eron (2019)] Let $t>0$ and $x \in \R^d$.

\noindent\emph{(a) Let $\beta \leq \beta^*(\alpha)$. Then $u^\infty_t = U_t$.}

\noindent \emph{(b) Let $\beta^*(\alpha) < \beta < \frac \alpha d$. Then $u^\infty_t$ satisfies
\begin{equation}
 C_1\frac{t^{-\frac 1 \beta}}{1+ |t^{-\frac 1 \alpha}x|^{d+\alpha}} \leq u^\infty_t(x) \leq C_2\frac{t^{-\frac  1 \beta} \log(e+|t^{-\frac 1 \alpha} x|)}{1+ |t^{-\frac 1 \alpha}x|^{d+\alpha}}
\end{equation}
for constants $0<C_1<C_2$.} \\

We introduce some terminology: if a solution to \eqref{e_pde}, or a limit of solutions is equal to $U_t$, then we will call the solution (or limit) \textit{flat}, because these solutions are constant for fixed $t>0$. Otherwise it is \textit{non-flat}. Thus $u^\infty_t$ is flat when $\beta \leq \beta^*(\alpha)$ and non-flat when $\beta^*(\alpha) < \beta < \frac \alpha d$.

With the exception of the case that $\beta = \beta^*(\alpha)$, this result was proved in \cite{CVW2016}, while the $\beta = \beta^*(\alpha)$ case was proved by the first two authors of that paper in \cite{CV2019}. Rather surprisingly, depending on the parameters $(\alpha,\beta,d)$, $u^\infty_t(x)$ is either flat or has almost the same asymptotic decay as the heat kernel associated to $\Delta_\alpha$. We interpret the above probabilistically. From \eqref{e_Lapcanonmeasure} with $\mu = \lambda \delta_x$, we have
\begin{equation}
\N_0(1 - \exp (-\lambda X_t(x))) = u^{\lambda}_t(x), \nonumber
\end{equation}
where $X_t(x)$ is the density of $X_t$ at $x$. Taking $\lambda \to \infty$ yields 
\begin{equation} \label{e_pointpdeinf}
\N_0(X_t(x) > 0) = u^{\infty}_t(x),
\end{equation}
from which the following is an immediate consequence of Theorem B.

\begin{theorem} \label{thm_pointdichotomy} Fix $t >0$. The following hold under $\N_0$ and $P_{X_0}^X$ for $X_0 \in \cM_F(\R^d)$. 

\noindent(a) Let $\beta \leq \beta^*(\alpha)$. Then for fixed $x \in \R^d$, $X_t(x) >0$ almost surely on $\{X_t \neq 0 \}$. In particular, $X_t(x) >0$ for Lebesgue-a.e. $x \in \R^d$ almost surely on $\{X_t \neq 0 \}$.

\noindent(b) Let $\beta^*(\alpha) < \beta < \alpha / d$. Then $\{x : X_t(x) > 0\}$ has finite Lebesgue measure almost surely. Moreover, we have
\begin{equation} \label{e_pointchargebounds}
 C_1 \frac{t^{-\frac  1 \beta}}{1+ |t^{-\frac 1 \alpha}x|^{d+\alpha}} \leq \N_0( X_t(x) > 0 ) \leq C_2\frac{t^{-\frac  1 \beta} \log(e+|t^{-\frac 1 \alpha} x|)}{1+ |t^{-\frac 1 \alpha}x|^{d+\alpha}}
\end{equation}
for constants $0<C_1<C_2$.
\end{theorem}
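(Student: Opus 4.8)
\emph{Proof sketch.} Theorem~\ref{thm_pointdichotomy} is a translation of Theorem~B through the identity \eqref{e_pointpdeinf}, together with the passage between the canonical measure $\N_0$ and the laws $P^X_{X_0}$; the only work is in the limiting and measure-theoretic bookkeeping. The plan is to first record the statements under $\N_0$. For (a), Theorem~B(a), \eqref{e_pointpdeinf} and \eqref{e_canon_survive} give $\N_0(X_t(x)>0)=u^\infty_t(x)=U_t=\N_0(X_t\neq 0)$. Since the version of the density in \eqref{e_density_liminf} of the zero measure is identically $0$, we have the event inclusion $\{X_t(x)>0\}\subseteq\{X_t\neq 0\}$; restricted to $\{X_t\neq 0\}$ the measure $\N_0$ is finite, so two nested events of equal $\N_0$-mass agree up to an $\N_0$-null set, and hence $X_t(x)>0$ a.s.\ on $\{X_t\neq 0\}$ under $\N_0$. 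For (b) under $\N_0$, the bounds \eqref{e_pointchargebounds} are just \eqref{e_pointpdeinf} combined with Theorem~B(b).

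The transfer to $P^X_{X_0}$ goes through the measure-indexed duality \eqref{e_Lapmeasure} (Lemma~\ref{lemma_measure_duality}). Taking $\mu=\lambda\delta_x$ and letting $\lambda\uparrow\infty$ --- using dominated convergence on the left, and on the right the monotone convergence $X_0(u^{\lambda\delta_x}_t)\uparrow X_0(u^\infty_t(\cdot-x))$, which holds by spatial homogeneity $u^{\lambda\delta_x}_t(\cdot)=u^{\lambda\delta_0}_t(\cdot-x)$ and the definition of $u^\infty$ --- yields $P^X_{X_0}(X_t(x)=0)=\exp(-X_0(u^\infty_t(\cdot-x)))$. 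This is finite since $u^\infty_t\le U_t$ pointwise (because $u^{\lambda\delta_0}_t(y)\le\N_y(X_t\neq 0)=U_t$ by \eqref{e_Lapcanonmeasure}). Similarly \eqref{e_Lap} with $\phi\equiv\lambda$, together with the fact that $U_t$ is the maximal solution of $u'=-u^{1+\beta}$, gives $P^X_{X_0}(X_t=0)=\exp(-X_0(1)U_t)$. In case (a) Theorem~B(a) makes $u^\infty_t\equiv U_t$, so these two probabilities coincide, and as the events $\{X_t=0\}\subseteq\{X_t(x)=0\}$ are nested we conclude $X_t(x)>0$ a.s.\ on $\{X_t\neq 0\}$. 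The ``Lebesgue-a.e.\ $x$'' consequence then follows from Tonelli: $\int_{\R^d}P^X_{X_0}(X_t(x)=0,\,X_t\neq 0)\,dx=0$ forces $|\{x:X_t(x)=0\}|=0$ a.s.\ on $\{X_t\neq 0\}$, and the same computation with the $\sigma$-finite measure $\N_0$ handles the canonical case.

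For (b) under $P^X_{X_0}$, the identity for $P^X_{X_0}(X_t(x)=0)$ above gives $P^X_{X_0}(X_t(x)>0)\le X_0(u^\infty_t(\cdot-x))$; integrating in $x$ and using Tonelli with translation invariance, $E^X_{X_0}\!\left[\,|\{x:X_t(x)>0\}|\,\right]\le X_0(1)\int_{\R^d}u^\infty_t(z)\,dz$. The upper bound in Theorem~B(b) dominates $u^\infty_t$ by a constant times $z\mapsto \log(e+|t^{-1/\alpha}z|)\,/\,(1+|t^{-1/\alpha}z|^{d+\alpha})$, which is integrable on $\R^d$ since $d+\alpha>d$ (the logarithmic factor being harmless), so the right side is finite and $|\{x:X_t(x)>0\}|<\infty$ $P^X_{X_0}$-a.s.; the identical estimate under $\N_0$, using $\N_0(X_t(x)>0)=u^\infty_t(x)$ and $\int u^\infty_t<\infty$, gives the $\N_0$ version. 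The only delicate points are the $\lambda\uparrow\infty$ interchanges, which rely on the monotone convergence $u^{\lambda\mu}_t\uparrow u^\infty_t$ recorded in Lemma~\ref{lemma_measure_duality} and the surrounding discussion; I do not expect a substantive obstacle, since all the analytic content is already contained in Theorem~B.
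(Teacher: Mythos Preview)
Your proof is correct and follows essentially the same route as the paper: Theorem~\ref{thm_pointdichotomy} is derived as an immediate consequence of Theorem~B via the identity $\N_0(X_t(x)>0)=u^\infty_t(x)$, the event inclusion $\{X_t(x)>0\}\subseteq\{X_t\neq 0\}$, and Fubini/Tonelli for the ``a.e.~$x$'' and ``finite Lebesgue measure'' claims. The one substantive difference is in the passage from $\N_0$ to $P^X_{X_0}$: the paper invokes the cluster representation (Section~\ref{s_cluster}), writing $X_t=\sum_{i=1}^N X_t^i$ so that $\{x:X_t(x)>0\}=\bigcup_i\{x:X_t^i(x)>0\}$ is a finite union of sets each of finite Lebesgue measure, and similarly for part~(a). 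You instead compute $P^X_{X_0}(X_t(x)=0)=\exp(-X_0(u^\infty_t(\cdot-x)))$ directly from the Laplace duality, then compare with $P^X_{X_0}(X_t=0)=\exp(-X_0(1)U_t)$ in~(a) and use $1-e^{-z}\le z$ to bound the expected measure in~(b). Both arguments are valid and of comparable length; your route is perhaps slightly more self-contained since it does not appeal to the Poisson cluster structure, while the paper's route makes the heuristic (finitely many clusters, each with finite positivity set) more transparent. One small correction: the monotonicity $u^{\lambda\mu}_t\uparrow u^{\infty\mu}_t$ you cite is recorded in Lemma~\ref{lemma_stability}(b), not Lemma~\ref{lemma_measure_duality}.
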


Part (a) follows from Theorem B(a) as follows: by \eqref{e_canon_survive} and \eqref{e_pointpdeinf}, when $\beta \leq \beta^*(\alpha)$ we have
\[ \N_0(X_t(x) > 0 ) = \N_0(X_t \neq 0 ).\]
Since $\{X_t(x) > 0 \} \subseteq \{X_t \neq 0 \}$, it follows that $\N_x(X_t(x) = 0 \, | \, X_t \neq 0 ) = 0$ and Fubini's theorem implies that $X_t(x) > 0$ for Lebesgue-a.e. $x$ almost surely under $\N_0(\cdot \, | \, X_t \neq 0 )$. In part (b), the fact that $\{x: X_t(x) > 0 \}$ has finite Lebesgue measure under $\N_0$ follows from integrability of the upper bound in \eqref{e_pointchargebounds} and Fubini's theorem. For $P^X_{X_0}$, the result then follows from the cluster representation for the superprocess (see Section~\ref{s_cluster}).

In keeping with the terminology of instantaneous propagation for the behaviour from Theorem A, we propose to call \textit{strong instantaneous propagation} the property described for $\beta \leq \beta^*(\alpha)$. Both results say, in some sense, that $X_t$ has mass ``everywhere." Instantaneous propagation describes this on the level of mass on open sets, whereas strong instantaneous propagation concerns the density at a fixed point.

A priori, taken as an immediate consequence of Theorem B, we lack a probabilistic intuition for this result, as Theorem B is proved using analytical methods. In Section~\ref{s_points} we give a probabilistic proof of part (a). The arguments there are prototypical of other arguments used later on to prove some of our other results.\\

\noindent \textbf{Open Problem.} Give a probabilistic proof of Theorem~\ref{thm_pointdichotomy}(b).\\

We now strengthen one of the conclusions of Theorem~\ref{thm_pointdichotomy} when a continuous version of the density $X_t(\cdot)$ exists. The starting point for this is another dichotomy for the $(\alpha,\beta)$-superprocess, which was proved by Fleischmann, Mytnik and Wachtel \cite{FMW2010}.\\ 

\noindent \textbf{Theorem C.} [Fleischmann, Mytnik, Wachtel (2010)] \emph{Fix $t>0$ and consider $X_t$ under $P^X_{X_0}$ with $X_0 \in \cM_F(\R^d)$. \\
(a) If $d = 1$ and $\alpha > 1 + \beta$, then there is a version of the density $X_t(\cdot)$ which is locally $\eta$-H\"older continuous for all $\eta < \frac{\alpha}{1+\beta} - 1$. }

\noindent \emph{(b) If $d >1$, or $d=1$ and $\alpha \leq 1+\beta$, then $\| 1_U X_t(\cdot) \|_\infty = \infty$ for all open sets $U$ almost surely on $\{X_t \neq 0 \}$.} \\

We will refer to the parameter regime $d=1$, $\alpha > 1+ \beta$ as the \textit{continuous case}. In this case it is understood that we will always work with the continuous version of the density $x \to X_t(x)$, and it is easy to see that this version agrees with the version defined in \eqref{e_density_liminf}. The case from Theorem C(b) is then the \textit{discontinuous case}. For $d=1$, in the parameter regime with $\alpha > 1+\beta$ and $\beta < \beta^*(\alpha)$, the density enjoys both continuity and strong instantaneous propagation, and we are able to show the following strict positivity result.

\begin{theorem} \label{thm_positiveEverwhere} Let $d=1$, $\alpha > 1 + \beta$ and $\beta < \beta^*(\alpha)$. Then for $t>0$,
\[ X_t(x) > 0 \,\, \text{ for all $x \in \R$ almost surely on $\{X_t \neq 0 \}$} \]
under both $P_{X_0}^X$ and $\N_0$.
\end{theorem}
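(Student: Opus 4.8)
The plan is to pass to the canonical measure, use strong instantaneous propagation at fixed points (Theorem~\ref{thm_pointdichotomy}(a)) together with the local H\"older continuity of the density (Theorem~C), and run a covering argument whose only non-soft ingredient is a small-ball lower bound for $\N_0$. First I would reduce to $\N_0$: by the cluster representation of Section~\ref{s_cluster}, under $P^X_{X_0}$ the measure $X_t$ is a sum $\sum_j Y^j_t$ of cluster evaluations, the clusters being the atoms of a Poisson point process on $\D([0,\infty),\cM_F(\R))$ with intensity $\int X_0(dx)\,\N_x(\cdot)$; by \eqref{e_canon_survive} only finitely many clusters satisfy $Y^j_t\neq 0$, and $\{X_t\neq0\}$ is the event that at least one does. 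Each surviving cluster is absolutely continuous with a continuous density (Theorem~C under $\N_x$), and the density of $X_t$ is the corresponding finite sum of nonnegative continuous densities; so if the conclusion holds under $\N_0$, hence by translation invariance under every $\N_x$, then each surviving cluster and therefore $X_t(\cdot)$ is strictly positive on all of $\R$. It thus suffices to prove: $\N_0$-a.e.\ on $\{X_t\neq0\}$, $X_t(x)>0$ for every $x\in\R$.

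Working under $\N_0$, Theorem~\ref{thm_pointdichotomy}(a) gives $\N_0(X_t(q)=0,\,X_t\neq0)=0$ for each fixed $q$, so a.e.\ on $\{X_t\neq0\}$ the density is positive on a countable dense set, while Theorem~C provides, for each fixed $\eta\in(0,\tfrac{\alpha}{1+\beta}-1)$, a locally $\eta$-H\"older version; hence the zero set $Z:=\{x:X_t(x)=0\}$ is closed with $|Z|=0$ a.e. Fix a compact interval $K$; it suffices to show $\N_0(X_t\neq0,\,Z\cap K\neq\emptyset)=0$. If $x_0\in Z\cap K$ then H\"older continuity gives $X_t(B(x_0,r))=\int_{B(x_0,r)}|X_t(x)-X_t(x_0)|\,dx\le\Lambda_K(2r)^{1+\eta}$ for all $r>0$, where $\Lambda_K<\infty$ $\N_0$-a.e.\ is the $\eta$-H\"older seminorm of $X_t$ near $K$. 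Covering $K$ by $\asymp|K|/r$ balls $B(q_i,r)$, the one containing $x_0$ satisfies $0<X_t(B(q_i,2r))\le\Lambda_K(4r)^{1+\eta}$, the strict positivity coming from instantaneous propagation (the Remark after Theorem~A). Therefore, for every $m>0$,
\[
\N_0\big(X_t\neq0,\,Z\cap K\neq\emptyset\big)\ \le\ \N_0(\Lambda_K>m)\ +\ \frac{C|K|}{r}\,\sup_{q\in K}\N_0\big(0<X_t(B(q,2r))\le m(4r)^{1+\eta}\big),
\]
so everything reduces to proving a small-ball estimate of the shape $\N_0\big(0<X_t(B(q,r))\le\lambda r^{1+\eta}\big)\le C(t,\lambda,K)\,r^{1+\kappa}$, $q\in K$, $r$ small, for some $\kappa=\kappa(\alpha,\beta,\eta)>0$; granting this I would let $r\downarrow0$, then $m\uparrow\infty$ (using $\N_0(X_t\neq0)=U_t<\infty$ and $\Lambda_K<\infty$ a.e.), then $K\uparrow\R$.

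To prove that small-ball estimate I would combine the Laplace transform of $\N_0$ with the scaling of the canonical measure. Writing $B=B(q,r)$, \eqref{e_Lapcanonmeasure} and $\N_0(X_t(B)>0)=U_t$ yield, for $\lambda'>0$,
\[
\N_0\big(0<X_t(B)\le a\big)\ \le\ e^{\lambda'a}\,\N_0\big(e^{-\lambda'X_t(B)};\,X_t(B)>0\big)\ =\ e^{\lambda'a}\big(U_t-u^{\lambda'1_B}_t(0)\big),
\]
so one needs a quantitative rate for $u^{\lambda 1_B}_t(0)\uparrow U_t$ as $\lambda\to\infty$. Rescaling space by $r^{-1}$ and time by $r^{-\alpha}$ — under which $\N_0$ picks up a factor $r^{-\alpha/\beta}$ and $B(q,r)$ becomes the unit ball $B(q/r,1)$ — turns this into controlling, at the large time $T=t/r^\alpha$, the probability that the mass of a fixed ball under $\N_0(\,\cdot\mid X_T\neq0)$ is much smaller than its conditional mean, which is of order $T^{1/\beta-1/\alpha}\to\infty$. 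The hard part will be exactly this: one needs the lower-tail exponent of $X_t(B(q,r))/r$ to strictly exceed $1/\eta$ for some admissible $\eta<\tfrac{\alpha}{1+\beta}-1$, and it is here that the hypotheses $\alpha>1+\beta$ and $\beta<\beta^*(\alpha)$ must be used quantitatively. No soft argument can replace this step, since a generic nonnegative continuous field that is a.e.\ positive and positive at each fixed point may still vanish somewhere (e.g.\ $|W|$ for a Brownian-type $W$); so one is likely forced to invoke genuine small-value estimates for the density in the spirit of \cite{FMW2010}.
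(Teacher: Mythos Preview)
Your covering-plus-small-ball scheme is coherent, but the crucial step you flag as ``the hard part'' is a genuine gap: you never prove the small-ball estimate $\N_0(0<X_t(B(q,r))\le m r^{1+\eta})\le C r^{1+\kappa}$, and your Laplace-transform reduction to a quantitative rate for $U_t-u^{\lambda 1_B}_t(0)$ is not completed. Establishing such a rate is not obviously easier than the theorem itself, and your final sentence (that one is ``likely forced'' to use \cite{FMW2010}-type small-value estimates) concedes this.

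The paper's proof avoids that difficulty entirely by replacing the fixed-time small-ball estimate with a Markov property argument along times $t-\delta_n\uparrow t$. The key elementary input is Lemma~\ref{lemma_expbd}: for any $X_0$, one has
\[
P^X_{X_0}\big(X_\delta(x)\le \delta^{\frac{\alpha-\beta}{\alpha\beta}}\big)\ \le\ e\cdot\exp\Big(-c\,\frac{X_0(B_R)}{R^{1+\alpha}}\,\delta^{-q}\Big),\qquad q=\tfrac1\beta-\tfrac{d+\alpha}{\alpha}>0,
\]
which is just exponential Chebyshev together with the heat-kernel lower bound $u^\lambda_1\ge c\,p_1$ from Lemma~\ref{lemma_heatkernellowerbd}; the positivity of $q$ is exactly where $\beta<\beta^*(\alpha)$ enters. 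Conditioning on $\cF_{t-\delta_n}$ and using instantaneous propagation to guarantee $X_{t-\delta_n}([-R,R])\ge\kappa$ eventually, a union bound over the $\asymp 2^n$ dyadic lattice points in $[-R,R]$ against the stretched-exponential bound above is summable (Borel--Cantelli), giving a quantitative lattice lower bound $X_t(x)\ge 2^{-\eta n/2}$ on $\Lambda_n^R$. H\"older continuity then bridges between lattice points. This route is both simpler than the one you outline and uses nothing from \cite{FMW2010} beyond the H\"older continuity statement. If you want to salvage your approach, note that the paper's Markov-property estimate can in fact be repackaged to yield the pointwise lower-tail bound you need; but going via the lattice directly is cleaner.
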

\begin{remark} Strong instantaneous propagation still holds when $\beta = \beta^*(\alpha)$, but the proof of the above does not, and this case is left open. Of course, $\{x : X_t(x) > 0\}$ has finite Lebesgue measure when $\beta > \beta^*(\alpha)$ by Theorem~\ref{thm_pointdichotomy}(b). \end{remark}

This is an interesting property and is perhaps best understood in the context of similar results for non-negative solutions to stochastic PDE (SPDE). Consider first the stochastic heat equation
\begin{equation} \label{e_SHE}
 \partial_t Y_t(x) = \Delta Y_t(x) + Y_t(x)^\gamma \dot{W}(t,x)
\end{equation}
on $(0,\infty) \times \R$, where $\dot{W}(t,x)$ is space-time white Gaussian noise and $\gamma >0$. It was shown by Mueller in \cite{M1991} that if $\gamma \geq 1$, then a non-negative solution $Y_t(x)$ satisfies $Y_t(x) > 0$ for all $(t,x) \in (0,\infty) \times \R$. On the other hand, Perkins and Mueller \cite{MP1992} have shown that when $\gamma < 1$, if $Y_0$ has compact support then $Y_t$ has compact support for all $t>0$ a.s. The case $\gamma = \frac 1 2$ corresponds to the density of super-Brownian motion, in which case the result was originally due to Iscoe \cite{I1998}.

In the SPDE associated to the $(\alpha,\beta)$-superprocess with $\beta<1$ in dimension one, the diffusion term is replaced with fractional diffusion and the noise is stable rather than Gaussian. In particular, the density $X_t(x)$ of the $(\alpha,\beta)$-superprocess solves the SPDE
\begin{equation}\label{e_stable_spde} \partial_t X_t(x) = \Delta_\alpha X_t(x) + X_{t^-}(x)^{\frac{1}{1+\beta}} \dot{L}(t,x),\end{equation}
where $\dot{L}(t,x)$ is a spectrally positive space-time stable noise of index $1+\beta$. The Green's function representation \eqref{e_density_GFR} established in \cite{FMW2010} can be viewed as a mild form of solution but only applies at fixed times. Alternatively, weak solutions to the SPDE obtained by replacing $\Delta_\alpha$ with $\Delta$ in \eqref{e_stable_spde} were studied in \cite{M2002}. The key results of that work (Propositions 4.1 and 5.1) could be generalized to construct weak solutions to \eqref{e_stable_spde}.

In this context, Theorem~\ref{thm_positiveEverwhere} is a (fixed time) strict positivity result for a fractional SPDE with stable noise, and it is the first such result that we are aware of. Furthermore, because \linebreak$\{x : X_t(x) >0\}$ has finite Lebesgue measure when $\beta > \beta^*(\alpha)$, it is apparent that the interplay of fractional diffusion and stable noise with non-Lipschitz coefficients lead to non-trivial behaviour which is not seen in Gaussian SPDE like \eqref{e_SHE} (c.f. the results from \cite{M1991} and \cite{MP1992} discussed above).

In order to best understand the results that follow, it will be useful to view Theorem A and Theorem~\ref{thm_pointdichotomy} from a unified perspective. Let $m_U$ denote the Lebesgue measure restricted to $U \subset \R^d$. Theorem A states that if $U$ is open (so $m_U$ is non-zero), $\int X_t(x) m_U(dx) > 0$ a.s. on $\{X_t \neq 0 \}$. Thus $X_t(\cdot)$ has mass everywhere in $\R^d$ at a \textit{macroscopic} level--the level of open sets. In terms of the PDE \eqref{e_pde}, the equivalent statement is that $\lim_{\lambda \to \infty} u^{\lambda m_U}_t(x)$ is equal to the flat solution $U_t$. The Lebesgue measure on an open set $U$ is, locally speaking, the most spread out measure on $\R^d$. The least spread out measure, that is the most concentrated measure, is $\delta_{x}$ for some $x \in \R^d$. Theorem~\ref{thm_pointdichotomy} states that $X_t(\cdot)$ integrated against the measure $\delta_{x}$, i.e. $X_t(x)$, is positive a.s. on $\{X_t \neq 0 \}$ \textit{if and only if} $\beta \leq \frac{\alpha}{d+\alpha}$, again with an equivalent interpretation that $\lim_{\lambda \to \infty} u^{\lambda \delta_x} = U_t$. In this case, $X_t(\cdot)$ has mass everywhere at a \textit{microscopic} level--at a single point. Taken together, these two results describe the almost sure behaviour of $X_t(\cdot)$ on both the most concentrated and least concentrated measures on $\R^d$: $\delta_x$ and $m_U$, respectively. Our results that follow interpolate between these two extremes to describe the almost sure behaviour of $\mu(X_t)$ for general measures $\mu$. We are therefore also able to answer some questions about flatness and non-flatness of solutions to \eqref{e_pde} that are the limits as $\lambda \to \infty$ of solutions with initial condition $\lambda \mu$. Indeed, a general principle underlying this work can be summarized as follows:

\begin{center}\emph{$\mu(X_t) > 0$ almost surely on $\{X_t \neq 0\}$ if and only if $\lim_{\lambda \to \infty} u^{\lambda \mu}_t = U_t$.}\end{center}

A condition which quantifies the size of a measure, in the sense of the size of its support, is the following \textit{mass distribution property}. A measure $\mu \in \cM_F(\R^d)$ satisfies condition (F1) with parameter $s \in [0,d]$, or simply (F1)-$s$, if:
\begin{itemize}
\item[] (F1)-$s$ \hspace{2 mm} For some constant $\overline{C} > 0$, for all $x \in \R^d$ and $r>0$,
\begin{equation} 
\mu(B(x,r)) \leq \overline{C} r^s. \nonumber 
\end{equation}
\end{itemize}
This condition means that $\mu$ is spread out in the sense that its support is large, i.e. at least $s$-dimensional. In particular, if $\mu$ satisfies (F1)-$s$, then $\text{dim}(\text{supp}(\mu)) \geq s$, where $\text{dim(A)}$ denotes the Hausdorff dimension of $A \subset \R^d$. (See Frostman's Lemma below.)

We view $\{X_t(x) > 0 \}$ as the event that $X_t(\cdot)$ charges the measure $\delta_x$. Our results give a partial answer the question: \textit{which measures does $X_t(\cdot)$ charge almost surely?} 

\begin{definition} For $s \in [0,d]$, let $\beta^*(\alpha,s) = \frac{\alpha}{(d-s) + \alpha}$. \end{definition}
We remark that $\beta^*(\alpha,0) = \beta^*(\alpha)$, the critical parameter from Theorem~\ref{thm_pointdichotomy}. Furthermore, $\beta^*(\alpha,d) = 1$,  which is the critical parameter implicit in Theorem~A, since the $(\alpha,\beta)$-superprocess has instantaneous propagation for all $\beta \leq 1$. Recall from \eqref{e_Lapcanonmeasure} that
\[ \N_x (1-\exp(-\lambda \mu(X_t))) = u^{\lambda \mu}_t(x). \]
As $\lambda \to \infty$ the left hand side increases to $\N_x(\mu(X_t) > 0) < \infty$. 
\begin{definition}
Let $u^{\infty \mu}_t(x) = \lim_{\lambda \to \infty} u^{\lambda \mu}_t(x)$.
\end{definition}
We therefore have
\begin{equation} \label{e_uinf_canonrep}
\N_x(\mu(X_t) > 0 ) = u^{\infty \mu}_t(x).
\end{equation}
Since $\{\mu(X_t) > 0 \} \subseteq \{X_t \neq 0 \}$,  \eqref{e_canon_survive} and \eqref{e_uinf_canonrep} imply that
\begin{equation} \label{e_uinf_bd}
\sup_x u^{\infty \mu}_t(x) \leq U_t < \infty.
\end{equation}

The results that follow are fixed time results. For Theorems~\ref{thm_flatFrost}, \ref{thm_flatDim}, \ref{thm_nonflat}(a) and \ref{thm_cont}, and the discussion of these results, we fix $t>0$.

\begin{theorem} \label{thm_flatFrost} Suppose that $\mu \in \cM_F(\R^d)$ satisfies (F1)-$s$ for some $s \in [0,d]$ and let $\beta \leq \beta^*(\alpha,s)$.

\noindent (a) $\mu(X_t) > 0$ a.s. on $\{X_t \neq 0 \}$ under $\N_0$ and $P_{X_0}^X$. Equivalently, we have $u^{\infty \mu}_t = U_t$. 

\noindent (b) For any $\nu \in \cM_F(\R^d)$ with $\text{supp}(\mu) \subseteq \text{supp}(\nu)$, $\nu(X_t) > 0$ a.s. on $\{X_t \neq 0\}$ under $\N_0$ and $P^{X}_{X_0}$, and $u^{\infty \nu}_t = U_t$.
\end{theorem}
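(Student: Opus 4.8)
The plan is to fix $t>0$ and prove the single assertion that $u^{\infty\mu}_t(x)=U_t$ for every $x\in\R^d$; by \eqref{e_uinf_canonrep} and \eqref{e_uinf_bd} this is precisely ``$\mu(X_t)>0$ a.s. on $\{X_t\neq0\}$'' under each $\N_x$, and the $P^X_{X_0}$ statement follows from the cluster decomposition of Section~\ref{s_cluster}. Part (b) I would then deduce from (a) essentially for free. The argument in (a) in fact yields $u^{\infty\mu}_r\equiv U_r$ for \emph{every} $r>0$ (the two-time consistency inherent in the Markov property plus the Poisson cluster representation makes this an ``all or nothing'' dichotomy), and from the Green's function representation \eqref{e_density_GFR} the density $x\mapsto X_r(x)$ is lower semicontinuous, so $\{X_r>0\}$ is open. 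Hence $\mu(X_r)>0$ forces $\{X_r>0\}$ to contain an open set of positive $\mu$-mass, which is a neighbourhood of a point of $\mathrm{supp}(\mu)\subseteq\mathrm{supp}(\nu)$ and therefore of positive $\nu$-mass, so $\nu(X_r)>0$; thus $\{\mu(X_r)>0\}\subseteq\{\nu(X_r)>0\}$ up to null sets, giving $u^{\infty\nu}_r\ge u^{\infty\mu}_r=U_r$, hence equality.

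For (a) the main input is a fractional-moment estimate, generalising the $s=0$ case (Theorem~\ref{thm_pointdichotomy}(a)), whose probabilistic proof is promised in Section~\ref{s_points}. Using $z^{1+\gamma}=c_\gamma\int_0^\infty\lambda^{-2-\gamma}(e^{-\lambda z}-1+\lambda z)\,d\lambda$ for $\gamma\in(0,\beta)$ together with \eqref{e_evol} (which gives $\lambda S_t\mu(x)-u^{\lambda\mu}_t(x)=\int_0^t S_{t-r}((u^{\lambda\mu}_r)^{1+\beta})(x)\,dr$) and the two bounds $u^{\lambda\mu}_r\le\lambda S_r\mu$ and $u^{\lambda\mu}_r\le U_r$, one obtains
\[
\N_x\big(\mu(X_t)^{1+\gamma}\big)\ \lesssim_\gamma\ \int_0^t U_r^{\,\beta-\gamma}\,S_{t-r}\big((S_r\mu)^{1+\gamma}\big)(x)\,dr.
\]
Expanding $(S_r\mu)^{1+\gamma}$ and inserting the stable heat-kernel bounds $p_r(z)\asymp\min(r^{-d/\alpha},\,r|z|^{-d-\alpha})$ converts the right-hand side into an energy-type double integral $\iint k_t(y_1,y_2)\,\mu(dy_1)\mu(dy_2)$; the statement that (F1)-$s$ keeps this finite is exactly the hypothesis $\beta\le\beta^*(\alpha,s)$, which reflects the ``dimension-reduction'' heuristic that an (F1)-$s$ measure in $\R^d$ plays the role of a point mass in $\R^{d-s}$, with $\beta^*(\alpha,s)$ being $\beta^*(\alpha)$ computed in dimension $d-s$. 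A H\"older/Paley--Zygmund inequality then gives $u^{\infty\mu}_r(y)=\N_y(\mu(X_r)>0)\gtrsim (S_r\mu(y))^{(1+\gamma)/\gamma}\,\N_y(\mu(X_r)^{1+\gamma})^{-1/\gamma}>0$ for all $r>0,\ y\in\R^d$, and—after rescaling and using the scaling invariance of the $(\alpha,\beta)$-superprocess—the sharper lower bound $u^{\infty\mu}_\delta(y)\gtrsim U_\delta$ for $y$ within distance $\asymp\delta^{1/\alpha}$ of $\mathrm{supp}(\mu)$.

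The final step upgrades positivity to equality. Applying the Markov property at time $t-\delta$ and the Poisson cluster representation on $\{X_{t-\delta}\neq0\}$, where $\mathrm{supp}(X_{t-\delta})=\R^d$ by Theorem~A, one gets
\[
\N_x\big(\mu(X_t)=0,\,X_t\neq0\big)=\N_x\Big[e^{-\langle X_{t-\delta},\,u^{\infty\mu}_\delta\rangle}-e^{-U_\delta X_{t-\delta}(\R^d)}\Big]\ \le\ \N_x\Big[e^{-\langle X_{t-\delta},\,u^{\infty\mu}_\delta\rangle}\mathbf 1_{X_{t-\delta}\neq0}\Big].
\]
As $\delta\downarrow0$ one has $\N_x(X_{t-\delta}\neq0)=U_{t-\delta}\downarrow U_t$, and since $X$ is a.s.\ continuous at the fixed time $t$ and $X_t$ is absolutely continuous, $X_{t-\delta}\to X_t$ weakly with $X_t$ charging every ball (Theorem~A). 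Combining this with $u^{\infty\mu}_\delta\gtrsim U_\delta$ on the $\delta^{1/\alpha}$-tube around $\mathrm{supp}(\mu)$ and the standing assumption $\beta<\alpha/d$, the exponent $\langle X_{t-\delta},u^{\infty\mu}_\delta\rangle$ tends to $+\infty$ in $\N_x(\cdot\mid X_t\neq0)$-probability, so the right-hand side vanishes; as the left-hand side does not depend on $\delta$, it is $0$, i.e.\ $u^{\infty\mu}_t(x)=U_t$.

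I expect the hard part to be that last lower bound on $\langle X_{t-\delta},u^{\infty\mu}_\delta\rangle$. It requires, on one side, making $u^{\infty\mu}_\delta\gtrsim U_\delta$ near $\mathrm{supp}(\mu)$ rigorous under the one-sided condition (F1)-$s$ alone: at scales or points where $\mu$ is locally thin the clean scaling picture degrades, so one must pass to a favourable subsequence of scales and a ``thick'' sub-measure, or move between Hausdorff content and (F1)-$s$ via Frostman's Lemma. On the other side it requires a matching lower bound, holding with probability close to one, on the mass that $X_{t-\delta}$ assigns to a tube of width $\asymp\delta^{1/\alpha}$ about $\mathrm{supp}(\mu)$—a small-ball/anti-concentration estimate for the local mass of the superprocess. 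The interplay of these two facts—the geometric ``unavoidability'' of an $s$-dimensional set by the zero set of $X_\delta$ exactly when $s\ge d+\alpha(1-\beta^{-1})$, and the regularity of $X_{t-\delta}$ in a shrinking neighbourhood of that set—is the crux of the theorem.
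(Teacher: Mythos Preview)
Your moment/Paley--Zygmund route is a legitimate alternative to the paper's Feynman--Kac argument (Lemma~\ref{lemma_heatlwrbd_Frost}), and in fact both produce the same effective lower bound. Carrying your own estimate through, one gets $\N_y\big(\mu(X_\delta)^{1+\gamma}\big)\le C\,S_\delta\mu(y)\,\delta^{\gamma(1/\beta-(d-s)/\alpha)}$ (using $(S_r\mu)^{1+\gamma}\le\|S_r\mu\|_\infty^\gamma S_r\mu$ and Lemma~\ref{lemma_uniformheatbd_Frost}), and Paley--Zygmund then yields
\[
u^{\infty\mu}_\delta(y)\ \gtrsim\ S_\delta\mu(y)\,\delta^{-1/\beta+(d-s)/\alpha}.
\]
The paper gets the same bound via Feynman--Kac. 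The point you are missing is that you do not need this to be $\gtrsim U_\delta$ on a shrinking $\delta^{1/\alpha}$-tube (which would indeed require an (F2)-type lower density for $\mu$); on a \emph{fixed} ball $B_2\supset\text{supp}(\mu)$ one has $S_\delta\mu(y)\gtrsim\delta$ for all $y\in B_2$, giving $u^{\infty\mu}_\delta(y)\gtrsim\delta^{-q}$ with $q=1/\beta-(d-s+\alpha)/\alpha$. The Markov-property step then only needs $X_{t-\delta}(B_2)$ bounded below, which follows from Theorem~A and Lemma~\ref{lemma_contfix}. So the ``hard part'' you anticipate (anti-concentration on a shrinking tube, thick sub-measures) does not arise. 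You also do not treat the critical case $\beta=\beta^*(\alpha,s)$, where $q=0$ and $\langle X_{t-\delta},u^{\infty\mu}_\delta\rangle$ stays bounded; the paper handles this by a martingale convergence argument showing $P(\mu(X_t)>0\,|\,\cF_{t-\delta_n})\to 1(\mu(X_t)>0)$, which is bounded away from $0$ on $A_{2\kappa}$.

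\textbf{Part (b).} There is a genuine gap. You assert that $x\mapsto X_r(x)$ is lower semicontinuous from the Green's function representation \eqref{e_density_GFR}, but this is not established anywhere and is in fact dubious: the compensated stable martingale integral is signed, and in the discontinuous regime (Theorem~C(b)) the density is unbounded on every open set, so no useful pathwise regularity of $\{X_r>0\}$ is available. Without lower semicontinuity, $\mu(X_r)>0$ does not force $\{X_r>0\}$ to contain an open neighbourhood of a point of $\text{supp}(\mu)$, and your inclusion $\{\mu(X_r)>0\}\subseteq\{\nu(X_r)>0\}$ fails. The paper instead proves (b) analytically (Section~\ref{s_trace}): by Lemma~\ref{lemma_limsol}, $u^{\infty\nu}$ is a weak solution whose singular set contains $\text{supp}(\nu)\supseteq\text{supp}(\mu)$; Proposition~\ref{prop_pointlowerbound} gives $u^{\infty\nu}_t(x)\ge c\,t^{-1/\beta}p_1(t^{-1/\alpha}d(x,\text{supp}(\mu)))$; comparing with $u^{\lambda\mu}_t\le\lambda S_t\mu$ shows $u^{\lambda\mu}_t\le u^{\infty\nu}_t$ for small $t$ and hence for all $t$ (Theorem~\ref{thm_nonexistence}), so $u^{\infty\nu}_t\ge u^{\infty\mu}_t=U_t$.
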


Thus if $\mu$ is spread out in the sense of (F1)-$s$ and $\beta \leq \beta^*(\alpha,s)$, the density charges $\mu$ almost surely when $X_t$ is conditioned on survival. What is more surprising is part (b), which states that it is the closed support of a measure $\nu$ rather than its particular properties which ensure that $\nu(X_t) >0$ almost surely on $\{X_t \neq 0\}$. We next show how the above leads to a more general result using Frostman's Lemma. For $\cS \subset \R^d$, let $\cH^s(\cS)$ denote the $x^s$-Hausdorff measure of $\cS$ and recall that $\text{dim}(\cS)$ denotes the Hausdorff dimension of $\cS$. Let $\cM_F(\cS)$ denote the space of finite measures $\mu$ with $\text{supp}(\mu) \subseteq \cS$.\\

\noindent \textbf{Frostman's Lemma.}\emph{ Suppose that $\cS \subset \R^d$ is Borel. Then $\cH^s(\cS) > 0$ if and only if $\,\exists \,\mu \in \cM_F(\cS)$ satisfying (F1)-$s$. }\\

See Theorem 8.8 in \cite{Mattila} for a proof. The following is an immediate consequence of Theorem~\ref{thm_flatFrost} and Frostman's Lemma.

\begin{theorem} \label{thm_flatDim} Let $s \in [0,d]$ and suppose $\beta \leq \beta^*(\alpha,s)$. If $\mu \in \cM_F(\R^d)$ satisfies $\cH^s(\text{supp}(\mu)) > 0$, then $\mu(X_t) > 0$ a.s. on $\{X_t \neq 0 \}$ under $P^X_{X_0}$ and $\N_0$, and $u^{\infty \mu}_t = U_t$.
\end{theorem}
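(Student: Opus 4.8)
The plan is to obtain this as a direct corollary of Theorem~\ref{thm_flatFrost}(b) together with Frostman's Lemma; essentially all of the probabilistic content has already been established in Theorem~\ref{thm_flatFrost}, so the remaining work is bookkeeping with supports.

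First I would set $\cS := \text{supp}(\mu)$, which is closed and hence Borel, and use the hypothesis $\cH^s(\cS) > 0$. By Frostman's Lemma there is a nonzero finite measure $\tilde\mu \in \cM_F(\cS)$ satisfying (F1)-$s$. By construction $\text{supp}(\tilde\mu) \subseteq \cS = \text{supp}(\mu)$, and since $\beta \leq \beta^*(\alpha,s)$ the hypotheses of Theorem~\ref{thm_flatFrost} are satisfied for $\tilde\mu$. Then I would apply Theorem~\ref{thm_flatFrost}(b) with $\tilde\mu$ playing the role of the (F1)-$s$ measure and the given $\mu$ playing the role of the dominating measure $\nu$: the required containment $\text{supp}(\tilde\mu) \subseteq \text{supp}(\mu)$ is exactly what was just checked, so part (b) yields $\mu(X_t) > 0$ a.s. on $\{X_t \neq 0\}$ under both $\N_0$ and $P^X_{X_0}$, and $u^{\infty \mu}_t = U_t$. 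That is the full claim.

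There is no substantive obstacle, since the real content sits inside Theorem~\ref{thm_flatFrost}(b), whose point is precisely that it is the closed support of a measure, and not its finer structure, that determines whether the density charges it. The only things requiring care are entirely routine: that $\text{supp}(\mu)$ is Borel (it is closed), that Frostman's Lemma may be taken to produce a nonzero finite measure (standard; see \cite{Mattila}), and that one invokes part (b) — the support-comparison statement — rather than part (a), which applies directly only to measures that themselves satisfy (F1)-$s$.
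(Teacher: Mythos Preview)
Your proposal is correct and matches the paper's approach exactly: the paper states that Theorem~\ref{thm_flatDim} is an immediate consequence of Theorem~\ref{thm_flatFrost} and Frostman's Lemma, and you have supplied precisely those details, correctly invoking part (b) with the Frostman measure $\tilde\mu$ and the given $\mu$ as $\nu$.
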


Recall that if $\text{dim}(\text{supp}(\mu)) = s$, then $\cH^{s'}(\text{supp}(\mu)) = +\infty$ for all $s' < s$. In particular, when $\beta < \beta^*(\alpha,s)$ the conclusions above hold when $\text{dim}(\text{supp}(\mu)) \geq s$. The assertions in Theorems~\ref{thm_flatFrost} and \ref{thm_flatDim} that $u^{\infty \mu}_t = U_t$ complement Theorem G of \cite{CV2019}, which proves a similar result when $s$ is an integer and the set (in this setting $\text{supp}(\mu)$) is a line or hyperplane. Their result is stated in the language of initial traces, which we discuss later. Our critical parameter $\beta^*(\alpha, s)$ agrees with the critical parameter of their result. 

Observe that $\beta^*(\alpha,\alpha) = \frac \alpha d$. Since $\beta^*(\alpha,s)$ is increasing in $s$, it follows that, if $\beta < \frac \alpha d$, then $\beta < \beta^*(\alpha, s)$ for all $s \geq \alpha$. The first condition is required for the existence of the density, and hence if $s \geq \alpha$, $\beta < \beta^*(\alpha,s)$ holds over the entire parameter set in which we are interested. In other words, whenever the density exists, almost surely it charges any $\mu \in \cM_F(\R^d)$ such that $\text{dim}(\text{supp}(\mu)) \geq \alpha$. (This also complements an observation made in \cite{CV2019}.)

The results of Theorems~\ref{thm_flatFrost} and \ref{thm_flatDim} are sharp. When $\beta > \beta^*(\alpha,s)$, under complimentary assumptions on the measure $\mu$, $u^{\infty \mu}_t$ is non-flat and $\mu(X_t) = 0$ with positive probability on \linebreak$\{X_t \neq 0\}$. In order to state this result precisely we introduce the condition (F2). We say that $\mu \in \cM_F(\R^d)$ satisfies property (F2) with parameter $s \in [0,d]$, or (F2)-$s$, if:
\begin{itemize}
\item[] (F2)-$s$ \hspace{2 mm} For some constant $\underline{C}>0$, for all $x \in \text{supp}(\mu)$ and $r\in(0,1]$,
\begin{equation}
\mu(B(x,r)) \geq \underline{C} r^s. \nonumber
\end{equation}
\end{itemize}
In contrast with (F1), which implies that the mass of $\mu$ is spread out in a certain sense, (F2) tells us that the mass of $\mu$ is not \textit{too} spread out in that same sense. In particular, (F2)-$s$ implies that $\text{dim}(\text{supp}(\mu)) \leq s$ (for example, see Section 8.7 of \cite{Heinonen}).

Let $d(x,\cS) = \inf_{y \in \cS}|x-y|$ denote the distance between $x \in \R^d$ and a set $\cS \subset \R^d$. The restriction $s < \alpha$ in the following is because this is required for $(\beta^*(\alpha,s), \frac \alpha d)$ to be non-empty.

\begin{theorem} \label{thm_nonflat} Suppose that $\mu \in \cM_F(\R^d)$ satisfies (F2)-$s$ for some $s \in [0,\alpha)$ and has compact support $\text{supp}(\mu) = \cS$. Let $ \beta^*(\alpha,s) < \beta < \frac \alpha d$. \\
\noindent(a) For $x \in \R^d$ and $X_0 \in \cM_F(\R^d)$, $\N_x(\mu(X_t) = 0 \, | \, X_t \neq 0 ) >0$ and $P^X_{X_0}(\mu(X_t) = 0 \, | \, X_t \neq 0 ) > 0$. \\
\noindent(b) $\N_x(\mu(X_t) > 0 ) = u^{\infty \mu}_t(x)$ satisfies the following: there are constants $C_{\ref{e_thmnonflat_upperbound}} > c_{\ref{e_thmnonflat_upperbound}} >0$ such that for all $(t,x) \in Q$,
\begin{equation} \label{e_thmnonflat_upperbound}
\frac{c_{\ref{e_thmnonflat_upperbound}} t^{-\frac 1 \beta}}{1+|t^{-\frac 1 \alpha}d(x,\cS)|^{d+\alpha}}\leq \N_x(\mu(X_t) > 0) \leq C_{\ref{e_thmnonflat_upperbound}} \left[t^{-\frac 1 \beta - \frac s \alpha} \vee t^{-\frac 1 \beta}\right] \frac{ \log(e+t^{-\frac 1 \alpha}d(x,\cS))}{1+ |t^{-\frac 1 \alpha}d(x,\cS)|^{d+\alpha}}.
\end{equation}
In particular, $\N_x(\mu(X_t) > 0)$ vanishes uniformly on $\{x : d(x,\cS) \geq \rho \}$ as $t\downarrow 0$, for all $\rho > 0$.\\
\noindent(c) If, in addition, $\mu$ satisfies (F1)-$s$, then there is a constant $c_{\ref{e_thmnonflat_lowerbound}} >0$ such that for all $x \in \R^d$ and $t \in (0,1]$,
\begin{equation} \label{e_thmnonflat_lowerbound}
\N_x(\mu(X_t) > 0 ) \geq c_{\ref{e_thmnonflat_lowerbound}} \left[\frac{  t^{-\frac 1 \beta - \frac s \alpha}  }{1+|t^{-\frac 1 \alpha} [d(x,\cS) + \text{diam}(\cS)]|^{d+\alpha}} \right].
\end{equation}
\noindent(d) For any $\nu \in \cM_F(\cS)$, the conclusions of part (a) hold when $\mu$ is replaced with $\nu$. Furthermore, for constants $C_{\ref{e_thmnonflat_upperbound_nu}} > c_{\ref{e_thmnonflat_upperbound_nu}} >0$ we have
\begin{equation} \label{e_thmnonflat_upperbound_nu}
\frac{c_{\ref{e_thmnonflat_upperbound_nu}} t^{-\frac 1 \beta}}{1+|t^{-\frac 1 \alpha}d(x,\text{supp}(\nu))|^{d+\alpha}}\leq \N_x(\nu(X_t) > 0) \leq C_{\ref{e_thmnonflat_upperbound_nu}} \left[t^{-\frac 1 \beta - \frac s \alpha} \vee t^{-\frac 1 \beta}\right] \frac{ \log(e+t^{-\frac 1 \alpha}d(x,\cS))}{1+ |t^{-\frac 1 \alpha}d(x,\cS)|^{d+\alpha}}.
\end{equation}
\end{theorem}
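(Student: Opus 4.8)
Everything is phrased through the canonical-measure identity $\N_x(\mu(X_t)>0)=u^{\infty\mu}_t(x)$ (and its analogue for $\nu$), together with $\N_x(X_t\neq 0)=U_t$, so that part (a) is exactly the assertion $u^{\infty\mu}_t(x)<U_t$ for every $x$, and parts (b)--(d) are two-sided bounds on $u^{\infty\mu}_t$ (resp. $u^{\infty\nu}_t$). I would organize the proof around three ingredients: a \emph{localized} estimate for measures whose support has diameter $\lesssim t^{1/\alpha}$; a covering/summation step governed by (F2)-$s$ (and by (F1)-$s$ for the matching lower bounds); and a propagation-of-non-flatness argument combined with the Poisson cluster representation to pass from $\N_x$ to $P^X_{X_0}$.

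For the upper bounds in (b) and (d), start from the elementary subadditivity $u^{\infty\mu}_t\le\sum_i u^{\infty\nu_i}_t$ whenever $\mu=\sum_i\nu_i$, which holds because $\{\mu(X_t)>0\}=\bigcup_i\{\nu_i(X_t)>0\}$. Since a finite measure satisfying (F2)-$s$ has its support covered by $\lesssim r^{-s}$ balls of radius $r$ (disjoint-packing argument), I cover $\cS=\text{supp}(\mu)$ by $N$ balls $B(y_i,t^{1/\alpha})$ with $N\lesssim t^{-s/\alpha}$ for $t\le1$ and $N=O(1)$ for $t\ge1$, and set $\nu_i=\mu|_{B(y_i,t^{1/\alpha})\setminus\bigcup_{j<i}B(y_j,t^{1/\alpha})}$. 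The technical heart is the localized bound: for any finite $\nu$ with $\text{diam}(\text{supp}\,\nu)\le t^{1/\alpha}$,
\[
u^{\infty\nu}_t(x)\ \le\ C\,\frac{t^{-1/\beta}\log\!\big(e+t^{-1/\alpha}d(x,\text{supp}\,\nu)\big)}{1+|t^{-1/\alpha}d(x,\text{supp}\,\nu)|^{d+\alpha}} ,
\]
i.e.\ at its own natural scale such a $\nu$ behaves like a point mass, for which this is precisely Theorem B. I expect this to be proven by rescaling to unit diameter and constructing a supersolution of $\partial_t w=\Delta_\alpha w-w^{1+\beta}$ that is singular on $\text{supp}\,\nu$, adapting the single-point supersolution of \cite{CVW2016} (using that sums of such supersolutions are again supersolutions since $x\mapsto x^{1+\beta}$ is superadditive), and applying the comparison principle against the blowing-up initial data $\lambda\nu$, $\lambda\to\infty$; thinness of the support is essential here, as Theorem A rules out any non-flat bound once the singular set has nonempty interior. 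Summing over $i$ and using the arithmetic inequality $\sum_i(1+|t^{-1/\alpha}d(x,y_i)|^{d+\alpha})^{-1}\lesssim t^{-s/\alpha}(1+|t^{-1/\alpha}d(x,\cS)|^{d+\alpha})^{-1}$ (there are $\lesssim t^{-s/\alpha}$ centres, each at distance $\ge d(x,\cS)$ from $x$, while the full-lattice sum is $O(1)$) yields the upper bounds in (b) and (d); (d) uses only the covering number of $\cS$, a property encoded in $\mu$, so it applies verbatim to every $\nu\in\cM_F(\cS)$.

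Part (a) then follows. The (b) upper bound forces $u^{\infty\mu}_t(x)\to0$ as $d(x,\cS)\to\infty$ (fixed $t$), so $u^{\infty\mu}$ is non-flat. To upgrade this to $u^{\infty\mu}_t(x)<U_t$ for \emph{all} $x$, set $w=U_t-u^{\infty\mu}_t\ge0$, which solves the linear nonlocal equation $\partial_t w=\Delta_\alpha w-c\,w$ with $c=(U_t^{1+\beta}-(u^{\infty\mu}_t)^{1+\beta})/w\ge0$. If $w(t_0,x_0)=0$ then at that point $\partial_t w=0$ and $c\,w=0$, so $\Delta_\alpha w(t_0,x_0)=0$; but $\Delta_\alpha w(t_0,x_0)=a_{\alpha,d}\int w(t_0,y)|x_0-y|^{-d-\alpha}\,dy$ has a nonnegative integrand, forcing $w(t_0,\cdot)\equiv0$ and contradicting the decay just established (one justifies the pointwise step from smoothness of $u^{\infty\mu}$ for $t>0$, or works on the integral formulation). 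This gives (a) under $\N_x$; under $P^X_{X_0}$ one writes $X$ as a Poisson superposition of clusters with intensity $\int\N_x(\cdot)\,X_0(dx)$, so that $\{X_t\neq0\}$ and $\{\mu(X_t)=0\}$ intersect with positive probability because each surviving cluster independently carries no $\mu$-mass with positive $\N$-probability. The same reasoning applied to the (d) upper bound gives the (a)-type conclusions of (d) for all $\nu\in\cM_F(\cS)$.

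The lower bounds in (b), (c), (d) are genuinely probabilistic. For the crude bound in (b) and (d) I would condition on the ancestral $\alpha$-stable motion under $\N_x$: with probability $\asymp(1+|t^{-1/\alpha}d(x,\text{supp}\,\nu)|^{d+\alpha})^{-1}$ it brings a cluster within $t^{1/\alpha}$ of $\text{supp}\,\nu$ by time $t/2$ (the one-big-jump tail of the stable density), and then the subsequent superprocess started within $t^{1/\alpha}$ of $\text{supp}\,\nu$ has $\nu(X_{t/2})>0$ with probability $\asymp U_t$; this localized lower bound comes from a Paley--Zygmund (Hölder) estimate using fractional $(1+\beta')$-moments ($\beta'<\beta$) of $\nu(X_{t/2})$, in which the mass $\nu(B(\cdot,t^{1/\alpha}))$ cancels between the first and the $(1+\beta')$-th moment, so it holds for arbitrary $\nu$ (higher moments are infinite for $(1+\beta)$-stable branching). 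For the sharp bound (c) one restricts $\mu$ to $\asymp t^{-s/\alpha}$ disjoint pieces $\mu_i=\mu|_{B(z_i,t^{1/\alpha}/2)}$, $z_i\in\cS$ a maximal $t^{1/\alpha}$-separated set whose cardinality is $\asymp t^{-s/\alpha}$ by (F2)-$s$ together with (F1)-$s$ (the latter providing the lower bound on the count), and applies a second-moment bound to $\sum_i\mathbf 1\{\mu_i(X_t)>0\}$: the diagonal contributes $\sum_i u^{\infty\mu_i}_t(x)\gtrsim t^{-s/\alpha}\,t^{-1/\beta}\,(1+|t^{-1/\alpha}(d(x,\cS)+\text{diam}\,\cS)|^{d+\alpha})^{-1}$, since every $z_i$ lies within $d(x,\cS)+\text{diam}\,\cS$ of $x$, while the off-diagonal terms $\N_x(\mu_i(X_t)>0,\mu_j(X_t)>0)$ are controlled by decomposing the canonical cluster at its last common branch point and using that distinct pieces sit in disjoint $t^{1/\alpha}$-balls, giving $\sum_{i,j}\N_x(\cdots)\lesssim\sum_i\N_x(\mu_i(X_t)>0)$ and hence the claimed bound.

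I expect the two substantial obstacles to be (i) the localized upper estimate, because one must build a supersolution genuinely singular on a thin continuum rather than on a finite set and then run the comparison principle against $\lambda\nu$ as $\lambda\to\infty$; and (ii) for part (c), the off-diagonal control of the canonical cluster, where the absence of finite second moments for $(1+\beta)$-stable branching forces fractional-moment surrogates and a careful use of the branching decomposition. The covering lemmas, the summation inequality, and the non-flatness propagation are comparatively routine.
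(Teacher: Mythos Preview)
Your approach is genuinely different from the paper's, and largely plausible, though the route to the upper bound converges on the same key construction once fleshed out. The paper does not decompose $\mu$; it builds a single barrier $h_k(t,x)=k(w_t*\mu)(x)$ with $w_t(x)=t^{-1/\beta}(1+t^{-s/\alpha})V(t^{-1/\alpha}x)$, $V(x)=\log(e+|x|^2)/(1+|x|^{d+\alpha})$, and verifies directly (Proposition~\ref{prop_linearenvelope}) that this is a supersolution for large $k$---condition (F2)-$s$ is used exactly to show the nonlinear term dominates on the near set $\{t^{-1/\alpha}d(x,\cS)\lesssim1\}$. Part (d) of that proposition gives $h_k\ge u^\nu$ for \emph{every} $\nu\in\cM_F(\cS)$ by a mollify-and-compare argument, yielding the upper bounds in (b) and (d) at once. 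Your localized bound (measures of diameter $\le t^{1/\alpha}$ behave like point masses) is not a corollary of Theorem~B: the single-point barrier $k\tilde w_t(\cdot-z_0)$ is singular only at $z_0$ and fails to dominate $u^{\lambda\nu_i}$ at small times when $\text{supp}\,\nu_i$ is extended. Your superadditivity idea (summing point barriers over a net in $\text{supp}\,\nu_i$) does work, but carried to its continuum limit this \emph{is} the convolution barrier---so your route and the paper's meet at the same object, and the paper's direct construction is shorter.

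For (a) the paper takes one $x_0$ where the upper bound forces $u^{\infty\mu}_t(x_0)<U_t$ and then invokes mutual absolute continuity of the laws of $X_t$ under different initial measures (recalled in \eqref{mutual_absolute_continuity}, from \cite{EP1991,LZ2008}) together with the cluster link \eqref{clustercanon} to propagate to all $x$ and all $X_0$. Your nonlocal strong maximum principle is a clean analytic alternative, but it needs $u^{\infty\mu}$ to be classical at the touching point; the paper only works with weak solutions and the probabilistic route sidesteps this regularity issue. For the lower bounds in (b),(d) the paper proves $u^{\infty\mu}_t(x)\ge u^\infty_t(x-z)$ for any $z\in\cS$ (Lemma~\ref{lemma_umuinf_lwrdbd}) by extracting a point singularity from the blow-up and comparing; your Paley--Zygmund route with fractional moments is a reasonable probabilistic substitute. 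For (c), however, the paper avoids any correlation computation: it uses scaling \eqref{e_scaletinf} together with the Feynman--Kac lower bound of Lemma~\ref{lemma_heatlwrbd_Frost}, where (F1)-$s$ enters only to control the potential $\|S_t(r^s\mu(\cdot/r))\|_\infty$. Your second-moment scheme on $\sum_i\mathbf 1\{\mu_i(X_t)>0\}$ faces the obstacle you already flag---the off-diagonal terms $\N_x(\mu_i(X_t)>0,\mu_j(X_t)>0)$ are not obviously controllable by fractional-moment surrogates, since the natural tree decomposition produces products of cluster masses whose joint moments of order $>1$ are delicate for $(1+\beta)$-stable branching. The paper's analytic route is substantially cleaner here.
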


Theorems~\ref{thm_flatDim} and \ref{thm_nonflat} give a sharp picture of the behaviour of $X_t(\cdot)$ when integrated against an $s$-dimensional measure. Restrict to the event $\{X_t \neq 0 \}$. Then we have that when $\mu$ is \textit{at least} $s$-dimensional (i.e. $\cH^s(\text{supp}(\mu))>0$) and $\beta \leq \beta^*(\alpha,s)$, $X_t(\cdot)$ charges $\mu$ a.s., and when $\mu$ is \textit{at most} $s$-dimensional (by this we mean $\mu$ satisfies (F2)-$s$) and $\beta > \beta^*(\alpha,s)$, the probability that $X_t(\cdot)$ charges $\mu$ has spatial decay similar to an $\alpha$-stable heat kernel. Theorem A and Theorem~\ref{thm_pointdichotomy} are the special cases of $s=d$ and $s=0$, respectively; our results cover $s \in [0,d]$. Furthermore, for $s \in [0,\alpha)$ there is a non-trivial transition as we vary $\beta$ over the critical value $\beta^*(\alpha,s)$. For $s \in [\alpha,d]$ the density charges any $s$-dimensional set or measure almost surely for all $\beta < \frac \alpha d$. 

In the flat case, by Theorem~\ref{thm_flatFrost}(b) and Frostman's Lemma we were able to generalize the results from (F1)-$s$ measures to general measures whose supports have positive $\cH^s$-measure (Theorem~\ref{thm_flatDim}). In the non-flat case, our most general result is Theorem~\ref{thm_nonflat}(d), which holds for all $\nu$ whose support is contained in that of an (F2)-$s$ measure and is therefore at most $s$-dimensional. It is an attractive open problem to show that this can be extended to measures supported on general compact sets of dimension at most $s$. With the exception of the critical case $\beta = \beta^*(\alpha,s)$, this would provide a complete characterization of $\N_x(\mu(X_t) > 0 )$ for compactly supported $\mu$.

\begin{remark} For the special case of $\beta = 1$, which requires $d=1$ and $\alpha \in(1,2)$ in order for the density to exist, we observe that $\beta^*(\alpha,s) < 1$ for all $s \in[0,1)$. By Theorem~\ref{thm_nonflat}, this implies that for any $\mu \in \cM_F(\R^d)$ with $\text{dim}(\text{supp}(\mu)) < 1$, the $(\alpha,1)$-superprocess in $\R^1$ fails to charge $\mu$ with positive probability
 on $\{X_t \neq 0\}$. \end{remark}

To obtain matching upper and lower bounds on $\N_x(\mu(X_t) > 0 )$ as $t \downarrow 0$ in the non-flat case requires $\mu$ to satisfy both (F1)-$s$ and (F2)-$s$ (see Theorem~\ref{thm_nonflat}(c)). This is a strong condition which is sometimes called Ahlfors-David regularity. Still, the properties (F1)-$s$ and (F2)-$s$ are satisfied by many measures which are, in an appropriate sense, uniform over some $s$-dimensional set $\cS$. First consider the case where $s$ is an integer. If $\cS$ is a rectifiable curve in $\R^d$ and $\mu$ is its length measure, then $\mu$ satisfies (F1)-$1$ and (F2)-$1$. If $\cS$ is a surface, or two-dimensional manifold, then its surface measure $\mu$ satisfies (F1)-$2$ and (F2)-$2$. (In view of the discussion following Theorem~\ref{thm_flatFrost}, this implies that for every $\beta < \frac \alpha d$, if $\mu$ is the surface or volume measure for a manifold of dimension at least two, then $\mu(X_t)>0$ a.s. when conditioned on survival). For non-integer $s$, it is known that if $\cS$ is a self-similar Cantor set of dimension $s$, then its uniform measure $\mu$ satisfies (F1)-$s$ and (F2)-$s$. (For example, see the discussion in Section 3 in \cite{MS2009} or Section 1.2 of \cite{DavidSemmes}.) Our results can be applied directly to all of the examples above.

Many random sets support measures satisfying (F1) and/or (F2). To illustrate our dichotomy for the $(\alpha,\beta)$-superprocess we consider the range of an independent fractional Brownian motion with Hurst parameter $H \in (0,1)$ in $\R^d$. We denote this process by $(B_t)_{t \geq 0}$ and let $R(B) = \linebreak \{B_t : t \in [0,1]\}$. In order to attain both sides of the dichotomy we assume that $H > d^{-1} \vee \alpha^{-1}$, which requires $d \geq 2$ and $\alpha \in (1,2)$. (As we discuss below, $\text{dim}(R(B)) = H^{-1} \wedge d$. Our assumption guarantees that $\text{dim}(R(B)) = H^{-1} < \alpha$, and we have $\beta^*(\alpha,H^{-1}) < \frac \alpha d$.) The paths of $B_t$ are $\eta$-H\"older continuous for $\eta \in (0,H)$, (for example see Proposition 1.6 of \cite{Nourdin}) from which it can be easily shown that the measure $\mu_B$ defined by $\mu_B(A) = \int_0^1 1_A(B_t) \,dt$ satisfies (F2)-$s$ almost surely (with a random constant $\underline{C}(\omega)$) for all $s < H^{-1}$. Hence if $\beta > \beta^*(\alpha,H^{-1})$, by Theorem~\ref{thm_nonflat}, $\mu_B(X_t) = 0$ with positive probability on $\{X_t \neq 0 \}$. On the other hand, $\text{dim}(R(B)) = H^{-1} \wedge d$. This is a special case of Theorem 2.1 of \cite{X1995}. Given our assumption on $H$, $\text{dim}(R(B)) = H^{-1} < \alpha$. Hence by Theorem~\ref{thm_flatDim}, (and because $\text{supp}(\mu_B) = R(B)$) if $\beta < \beta^*(\alpha,H^{-1})$ then $\mu_B(X_t) > 0$ a.s. on $\{X_t \neq 0 \}$.

Finally, we specialize the flatness and non-flatness results to the continuous case, which we recall from Theorem C is when $d=1$ and $\alpha > 1+\beta$. It is an elementary consequence of continuity of $X_t(\cdot)$ that $\mu(X_t) > 0$ if and only if $X_t(x)>0$ for some $x \in \text{supp}(\mu)$. Theorem~\ref{thm_flatDim} and \ref{thm_nonflat} then imply the following. (Part (b) is  mainly included for contrast.)

\begin{theorem} \label{thm_cont} Let $d=1$, $\alpha > 1+\beta$ and $s \in [0,d]$.

\noindent (a) Suppose $\beta \in ( \beta^*(\alpha,s), \frac \alpha d)$ (which implies $s < \alpha$) and let $\cS = \text{supp}(\mu)$ for a compactly supported measure $\mu \in \cM_F(\R^d)$ satisfying (F2)-$s$. Then $X_t(x) = 0$ for all $x \in \cS$ with positive probability on $\{X_t \neq 0 \}$ under $P^X_{X_0}$ and $\N_0$. Moreover, \eqref{e_thmnonflat_upperbound} holds when $\N_x(\mu(X_t) > 0 )$ is replaced with \linebreak$\N_x(\{X_t(x) > 0 \text{ for some } x \in \cS\})$, and if $\mu$ satisfies (F1)-$s$, then so does \eqref{e_thmnonflat_lowerbound}.

\noindent (b) Suppose $\beta \leq \beta^*(\alpha,s)$ and let $\cS \subset \R^d$ be Borel with $\cH^s(\cS) > 0$. Then almost surely on $\{X_t \neq 0 \}$ there is a point $x\in \cS$ such that $X_t(x) >0$.
\end{theorem}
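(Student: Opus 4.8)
The plan is to deduce the theorem as a corollary of Theorems~\ref{thm_flatDim}/\ref{thm_flatFrost} and \ref{thm_nonflat}, the only additional input being continuity of the density. The single ingredient I would isolate first is the following elementary equivalence: in the continuous case ($d=1$, $\alpha>1+\beta$), for every $\nu\in\cM_F(\R^d)$ one has, on the almost sure event that $X_t(\cdot)$ is continuous,
\[
\{\nu(X_t)=0\}=\{X_t(x)=0\ \text{for all}\ x\in\text{supp}(\nu)\},\qquad \{\nu(X_t)>0\}=\{\exists\,x\in\text{supp}(\nu):X_t(x)>0\}.
\]
I would record this as a short lemma. Its proof is pathwise: since $\nu$ is carried by $\text{supp}(\nu)$, one inclusion is immediate; conversely, if $\nu(X_t)=\int X_t\,d\nu=0$ then the nonnegative continuous function $X_t(\cdot)$ vanishes $\nu$-a.e., so the open set $\{X_t>0\}$ is $\nu$-null and hence disjoint from $\text{supp}(\nu)$. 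I would also note here that the continuous version agrees with the $\liminf$ version of \eqref{e_density_liminf} used to define $\nu(X_t)$, and that when $\cS$ is compact $\sup_{x\in\cS}X_t(x)$ is a genuine random variable, so every event appearing in the statement is measurable.

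For part (a): the given $\mu$ is compactly supported, satisfies (F2)-$s$, and has $\beta\in(\beta^*(\alpha,s),\alpha/d)$ with $s<\alpha$, so Theorem~\ref{thm_nonflat}(a) applies and yields $P^X_{X_0}(\mu(X_t)=0\mid X_t\neq 0)>0$ and $\N_0(\mu(X_t)=0\mid X_t\neq 0)>0$. Since $\text{supp}(\mu)=\cS$, the equivalence above (applied $\omega$-wise) identifies $\{\mu(X_t)=0\}$ with $\{X_t(x)=0\ \text{for all}\ x\in\cS\}$ and $\{\mu(X_t)>0\}$ with $\{X_t(x)>0\ \text{for some}\ x\in\cS\}$, so the positivity statements transfer directly, and the bounds \eqref{e_thmnonflat_upperbound} (from Theorem~\ref{thm_nonflat}(b)) and, under the additional assumption (F1)-$s$, \eqref{e_thmnonflat_lowerbound} (from Theorem~\ref{thm_nonflat}(c)) hold verbatim with $\N_x(\mu(X_t)>0)$ replaced by $\N_x(\{X_t(x)>0\ \text{for some}\ x\in\cS\})$.

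For part (b): since $\cS$ is Borel with $\cH^s(\cS)>0$, Frostman's Lemma provides $\mu\in\cM_F(\cS)$ satisfying (F1)-$s$; as $\beta\le\beta^*(\alpha,s)$, Theorem~\ref{thm_flatFrost}(a) (equivalently Theorem~\ref{thm_flatDim}) gives $\mu(X_t)>0$ a.s. on $\{X_t\neq 0\}$. But $\mu(X_t)=\int X_t(x)\,\mu(dx)>0$ forces $X_t(x)>0$ on a set of positive $\mu$-measure, in particular at some $x\in\text{supp}(\mu)\subseteq\cS$; hence a.s. on $\{X_t\neq 0\}$ there is $x\in\cS$ with $X_t(x)>0$. (Continuity is not even needed here, though it makes $\{x\in\cS:X_t(x)>0\}$ relatively open.)

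I do not expect a genuine obstacle: all the analytic and probabilistic work is already contained in Theorems~\ref{thm_flatDim}/\ref{thm_flatFrost} and \ref{thm_nonflat}, and what remains is the bookkeeping of translating statements about $\mu(X_t)$ into statements about the pointwise density. The only place deserving care is the justification of the displayed equivalence and the attendant measurability claims, which rest on working with the continuous version of $X_t(\cdot)$ and on its coincidence with the version from \eqref{e_density_liminf} used throughout the paper.
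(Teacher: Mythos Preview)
Your proposal is correct and matches the paper's approach exactly: the paper states just before the theorem that ``It is an elementary consequence of continuity of $X_t(\cdot)$ that $\mu(X_t) > 0$ if and only if $X_t(x)>0$ for some $x \in \text{supp}(\mu)$,'' and then says Theorems~\ref{thm_flatDim} and \ref{thm_nonflat} imply the result. You have simply spelled out this elementary equivalence and the bookkeeping more carefully than the paper does, but the argument is the same.
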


Thus far we have generally viewed $\alpha$ as fixed and $\beta$ as a variable parameter, with the critical $\beta$ (i.e. $\beta^*(\alpha,s)$) depending on $s$, the dimension of $\cS$ or $\text{supp}(\mu)$. We would also like to draw attention to another perspective. We define the \textit{saturation dimension} associated to the parameters $(\alpha,\beta,d)$.
\begin{definition}\label{def_dsat}
For fixed parameters $(\alpha,\beta,d)$, the \textit{saturation dimension} $d_{\text{sat}}$ is
\begin{equation}
d_{\text{sat}} = d_{\text{sat}}(\alpha,\beta,d) = d + \alpha - \frac{\alpha}{\beta}.
\end{equation}
\end{definition}
Thus $d_{\text{sat}}$ is simply the value of $s$ for which $\beta = \beta^*(\alpha,s)$, and therefore it is the critical dimension of the set or measure as pertains to this problem. It is the maximum dimension of a set which the density can fail to charge with positive probability: if  $\cH^{d_{\text{sat}}}(\cS) >0$, implying $\text{dim}(\cS) \geq d_{\text{sat}}$, then the behaviour of $X_t$ is trivial on $\cS$ the sense that $\mu(X_t) >0$ a.s. on $\{X_t \neq 0 \}$ for every $\mu \in \cM_F(\R^d)$ with $\text{supp}(\mu) = \cS$. The fact that $d_{\text{sat}} \leq d$ corresponds exactly to instantaneous propagation, i.e. Theorem A. On the other hand, strong instantaneous propagation (i.e. the conclusions of Theorem~\ref{thm_pointdichotomy}(a)) corresponds to parameters for which $d_{\text{sat}} = 0$ (where if $d_{\text{sat}} < 0$ we simply define it to be $0$). A summary of some of the results above in these terms is as follows:

\begin{itemize}
\item \textit{If $\mu \in \cM_F(\R^d)$ with $\cH^{d_{\text{sat}}}(\text{supp}(\mu)) >0$, then $X_t(\cdot)$ charges $\mu$ almost surely on $\{X_t \neq 0 \}$.
\item If $\cS = \text{supp}(\mu)$ for some $\mu \in \cM_F(\R^d)$ which satisfies (F2)-$s$ with $s < d_{\text{sat}}$ (implying \linebreak$\text{dim}(\cS) < d_{\text{sat}}$), then for every $\nu \in \cM_F(\cS)$, $\nu(X_t) = 0$ with positive probability on $\{X_t \neq 0 \}$.}
\end{itemize}

We now turn our attention to the PDE \eqref{e_pde} and state some new results concerning the initial trace theory for positive solutions to this equation. We emphasize that all of our results apply only when $\beta < \frac \alpha d$, which in the PDE literature is sometimes called the \textit{subcritical absorption} regime. Following \cite{CV2019}, we define the initial trace. For $A \subseteq \R^d$, let $C_c(A)$ denote the space of continuous, compactly supported functions on $A$. A positive solution $u$ to \eqref{e_pde} has \textit{initial trace} $(\cS,\nu)$,  where $\cS \subset \R^d$ is closed and $\nu$ is a Radon measure satisfying $\nu(\cS) = 0$, if
\begin{itemize} 
\item For all $\xi \in  C_c(\cS^c)$,	
\begin{equation} \label{e_inittrace_measure}
\lim_{t \to 0} \int \xi(x) u_t(x) dx = \int \xi d\nu.
\end{equation}
\item For every $z \in \cS$ and $\rho > 0$,
\begin{equation} \label{e_inittrace_singular}
\lim_{t \to 0} \int_{B(z,\rho)} u_t(x) dx = + \infty.
\end{equation}
\end{itemize}
The set $\cS$ is called the \textit{singular set} of $u$, whereas $\cS^c$ is called the \textit{regular set}. Theorems A and B of \cite{CV2019} give general conditions under which a positive solution to \eqref{e_pde} can be associated to an initial trace $(\cS,\nu)$. Our results concern the converse problem of determining if there exists a solution with a given initial trace. 

The non-fractional analogue of \eqref{e_pde} is 
\begin{equation} \label{e_nonfracpde}
\partial_t u = \Delta u -  u^p.
\end{equation} 
The initial trace theory for \eqref{e_nonfracpde} is well understood. For an analytic approach which covers a wide range of parameters $p$ (including $p = 1+\beta$ for $\beta \in (0,1]$), see Marcus and V\'eron \cite{MV1999}. Because of the dual relationship with super-Brownian motion, the problem is also amenable to probabilistic analysis. For the case $p=2$, Le Gall \cite{LG1996} characterized the positive solutions to \eqref{e_nonfracpde} using the Brownian snake.\footnote{Probabilistic approaches using superprocesses have been very effective in studying the elliptic equation related to \eqref{e_nonfracpde}, $\Delta u = u^p$ with $p \in (1,2]$. The $p=2$ case has been studied by Le Gall in \cite{LG1995, LG1997}, again using the Brownian snake. The work of Dynkin and Kuznetsov (e.g. \cite{DK1996, DK1998}) gives results covering $p \in (1,2]$, which is the entire range for which the superprocess approach applies.}

The theory for the fractional equation \eqref{e_pde} is more recent and is far from complete. Theorem~B hints at this, as it shows that existence of a solution with initial trace $(\{0\},0)$ (sometimes called a \textit{very singular solution}) depends on the values of the parameters. A one-point singular set is the smallest non-trivial singular set, and so a consequence of the theorem is that when $\beta \leq \beta^*(\alpha)$, the singular initial trace problem is trivial in the sense that if $\cS$ is non-empty, the solution equals or exceeds $U_t$ (see also Theorem F of \cite{CV2019}). This issue does not arise for solutions to \eqref{e_nonfracpde} with $p \in (1,2]$ because the associated superprocess has compact support which is localized near its initial conditions, and hence the event that the range of the superprocess does not intersect a given closed set (or that the superprocess does not charge a measure supported on that set in an appropriate sense) always has positive probability.



Our results about flatness and non-flatness of $u^{\infty \mu}_t$ allow us to advance the theory of existence for solutions to \eqref{e_pde} with a given initial trace. At this stage we are largely concerned with existence and we do not consider the regularity of solutions. For this reason we study weak solutions. A precise definition of a weak solution to \eqref{e_pde} with initial trace $(\cS,\nu)$ is given in Section~\ref{s_trace}. In that section we also restrict our attention to solutions $u(t,x)$ which are bounded above by $U_t$, i.e. $u(t,x) \leq U_t$. We define $\cU = \cU(Q)$ by
\begin{equation} \label{def_calU}
 \cU = \{u: Q \to [0,\infty) : u_t \leq U_t \,\,\, \forall \, t>0\}.
\end{equation}
By duality, in particular \eqref{e_canon_survive}, \eqref{e_Lapcanon} and \eqref{e_Lapcanonmeasure}, $\cU$ includes all solutions which admit a probabilistic representation in terms of the $(\alpha,\beta)$-superprocess, which includes all solutions obtained as the limit of solutions with initial data in $\cM_F(\R^d)$ or $\cB^+_b$. See also Theorem D of \cite{CV2019}, which proves (for classical solutions) that solutions satisfying a mild integrability property are bounded by $U_t$ and hence belong to $\cU$. It remains unresolved if there exist positive solutions to \eqref{e_pde} which do not belong to $\cU$; because they cannot be obtained as the limits of solutions with truncated function-valued initial conditions, it is unclear how one might construct one.

\begin{theorem} \label{thm_inittrace} (a) \emph{(Non-existence)} Let $\cS \subset \R^d$ be closed with $\cH^{d_{\text{sat}}}(\cS) > 0$. If the singular set of a solution $u$ to \eqref{e_pde} in \hspace{1 mm}$\cU$ contains $\cS$, then $u_t = U_t$. 
In particular, if $\cS \neq \R^d$ there is no solution to \eqref{e_pde} in \hspace{1 mm}$\cU$ with initial trace $(\cS,\nu)$ for any Radon measure $\nu$.

\noindent (b) \emph{(Existence)} Suppose that $\cS \subset \R^d$ is compact and there exists $\mu  \in \cM_F(\R^d)$ satisfying (F2)-$s$ with $s < d_{\text{sat}}$ such that $\cS = \text{supp}(\mu)$. Then there exists a weak solution to \eqref{e_pde} with initial trace $(\cS,0)$. The solution is given by $\lim_{\lambda \to \infty} u^{\lambda \mu}_t(x) = \N_x(\mu(X_t) > 0)$ and satisfies \eqref{e_thmnonflat_upperbound}. 
\end{theorem}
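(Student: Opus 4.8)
The plan is to deduce (a) from the flat case (Theorem~\ref{thm_flatFrost}/\ref{thm_flatDim}) by a comparison argument, and to obtain (b) by checking that the already-understood object $u^{\infty\mu}_t(x)=\lim_{\lambda\to\infty}u^{\lambda\mu}_t(x)=\N_x(\mu(X_t)>0)$ has initial trace $(\cS,0)$, its quantitative behaviour being supplied by Theorem~\ref{thm_nonflat}.

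\textbf{Part (a).} Since $d_{\text{sat}}$ is by definition the value of $s$ with $\beta=\beta^*(\alpha,s)$, Frostman's Lemma applied to the Borel set $\cS$ with $\cH^{d_{\text{sat}}}(\cS)>0$ produces a nonzero $\mu\in\cM_F(\R^d)$ with $\mathrm{supp}(\mu)\subseteq\cS$ satisfying (F1)-$d_{\text{sat}}$, and then Theorem~\ref{thm_flatFrost}(a) gives $u^{\infty\mu}_t=U_t$. So it suffices to prove $u_t\ge u^{\infty\mu}_t$, equivalently $u_t\ge u^{\lambda\mu}_t$ for every $\lambda>0$: combined with $u\in\cU$ this forces $U_t=u^{\infty\mu}_t\le u_t\le U_t$, hence $u_t=U_t$; the ``in particular'' then follows since $u_t=U_t\uparrow\infty$ as $t\downarrow0$ is incompatible with \eqref{e_inittrace_measure} for any $\xi\in C_c(\cS^c)$ with $\int\xi\,dx>0$, so if $\cS\neq\R^d$ no solution in $\cU$ with initial trace $(\cS,\nu)$ exists. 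To get $u_t\ge u^{\lambda\mu}_t$ I would fix $h>0$, note that $u$ restricted to $[h,\infty)\times\R^d$ solves \eqref{e_pde} started at time $h$ from the bounded datum $u_h\le U_h$, and compare with the solution started at time $h$ from $\lambda\mu$, then let $h\downarrow0$. The hypothesis enters through the singular set of $u$ containing $\cS\supseteq\mathrm{supp}(\mu)$: for any $\phi\in C_c^+(\R^d)$ with $\int\phi\,d\mu>0$ there is $z\in\cS$ with $\phi(z)>0$, hence a ball $B(z,\rho)$ on which $\phi$ exceeds a positive constant, so by \eqref{e_inittrace_singular} $\liminf_{h\downarrow0}\int\phi\,u_h\,dx=+\infty$; thus the initial data $u_h\,dx$ eventually dominate $\lambda\mu$ in the vague sense, and feeding this into the comparison/stability properties of the weak solution map developed in Sections~\ref{s_duality} and~\ref{s_trace} yields $u\ge u^{\lambda\mu}$ on $Q$.

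\textbf{Part (b).} Here $s<d_{\text{sat}}$, i.e.\ $\beta^*(\alpha,s)<\beta<\frac\alpha d$, so Theorem~\ref{thm_nonflat}(b) applies to $u^{\infty\mu}_t(x)=\N_x(\mu(X_t)>0)$: this limit is finite (bounded by $U_t$, cf.\ \eqref{e_uinf_bd}--\eqref{e_uinf_canonrep}), it is an increasing limit of solutions of \eqref{e_pde}, hence a weak solution by the monotone convergence argument of Section~\ref{s_duality}, and it satisfies \eqref{e_thmnonflat_upperbound}. It remains to verify that its initial trace is $(\cS,0)$. For the regular part, any $\xi\in C_c(\cS^c)$ is supported in $\{x:d(x,\cS)\ge\rho\}$ for some $\rho>0$, and by the last assertion of Theorem~\ref{thm_nonflat}(b) $u^{\infty\mu}_t\to0$ uniformly there as $t\downarrow0$, so $\int\xi\,u^{\infty\mu}_t\,dx\to0$, which is \eqref{e_inittrace_measure} with $\nu=0$ (and in particular shows $u^{\infty\mu}\not\equiv U$). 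For the singular part, fix $z\in\cS=\mathrm{supp}(\mu)$ and $\rho>0$, pick $\rho_0\le\rho$ with $\mu(\partial B(z,\rho_0))=0$, and use $u^{\infty\mu}_t\ge u^{\lambda\mu}_t$ together with the weak convergence $u^{\lambda\mu}_t\to\lambda\mu$ as $t\downarrow0$ (Lemma~\ref{lemma_measure_duality}) and the Portmanteau theorem to get $\liminf_{t\downarrow0}\int_{B(z,\rho)}u^{\infty\mu}_t\,dx\ge\lambda\mu(B(z,\rho_0))$; letting $\lambda\to\infty$ gives $+\infty$, which is \eqref{e_inittrace_singular}.

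\textbf{Main obstacle.} Part (b) is essentially bookkeeping built on Theorem~\ref{thm_nonflat}. The real work is the comparison step in (a): extracting the pointwise inequality $u\ge u^{\lambda\mu}$ from the local-$L^1$ blow-up \eqref{e_inittrace_singular} when $\mu$ is a (necessarily typically singular) Frostman measure. Because $u\,dx$ is absolutely continuous while $\mu$ need not be, this cannot be done by a naive comparison of measures at a fixed small time; it requires stability of the weak solution operator as the initial time is sent to $0$, together with the vague ordering of the data — and this is where I expect the technical effort, and the reliance on the weak-solution framework of Section~\ref{s_duality}, to concentrate.
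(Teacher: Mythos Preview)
Your treatment of part (b) is essentially the paper's: $u^{\infty\mu}$ is a weak solution as a monotone limit (this is Lemma~\ref{lemma_limsol}), the singular set contains $\cS$ by the $u^{\infty\mu}\ge u^{\lambda\mu}$ argument you give, and the regular part is null by the uniform decay in Theorem~\ref{thm_nonflat}(b).

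For part (a) your overall strategy---Frostman to produce $\mu\in\cM_F(\cS)$ satisfying (F1)-$d_{\text{sat}}$, flatness of $u^{\infty\mu}$, then $u\ge u^{\lambda\mu}$ for every $\lambda$---matches the paper. But the obstacle you flag is genuine and your proposed resolution does not close it. Knowing $\int\phi\,u_h\,dx\to\infty$ for test functions supported near $\cS$ gives only vague domination of $\lambda\mu$ by $u_h\,dx$, not the measure ordering $u_h\,dx\ge\lambda\mu$ that the comparison principle (Lemma~\ref{lemma_stability}(b)) requires; since $\mu$ is typically singular this ordering can simply fail for every $h>0$, and no amount of stability in the solution map repairs that.

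The paper circumvents this by never comparing $u$ with $\lambda\mu$ at the level of initial data. Instead it first proves a \emph{pointwise} lower bound (Proposition~\ref{prop_pointlowerbound}): for each $z\in\cS$ one truncates $u_{t_n}$ to a shrinking ball $B(z,\rho_n)$ so that the truncation has fixed mass $\lambda$ and converges in $\cM_F(\R^d)$ to $\lambda\delta_z$; comparison and stability (Lemma~\ref{lemma_stability}(a)) then give $u(t,x)\ge u^\lambda_t(x-z)$ for every $\lambda$, hence $u(t,x)\ge u^\infty_t(x-z)\ge c\,t^{-1/\beta}p_1(t^{-1/\alpha}(x-z))$. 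Taking the supremum over $z\in\cS$ yields
\[
u_t(x)\ \ge\ c\,t^{-1/\beta}\,p_1\big(t^{-1/\alpha}d(x,\cS)\big).
\]
On the other side one has the pointwise upper bound $u^{\lambda\mu}_t(x)\le\lambda S_t\mu(x)\le\lambda\mu(1)\,t^{-d/\alpha}p_1(t^{-1/\alpha}d(x,\cS))$, since $p_t$ is radially decreasing and $\mathrm{supp}(\mu)\subseteq\cS$. Because $\beta<\alpha/d$ gives $t^{-1/\beta}\gg t^{-d/\alpha}$ as $t\downarrow0$, there is $t_0(\lambda)$ with $u_t\ge u^{\lambda\mu}_t$ pointwise for $t\le t_0(\lambda)$, and then the (function-level) comparison principle at time $t_0(\lambda)$ propagates this to all $t$. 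The localization-to-Diracs step is the missing idea in your argument; once you have it, the rest is exactly as you outline.
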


The problem of existence for solutions with initial trace $(\cS,0)$ is therefore characterized by the saturation dimension. This generalizes Theorem~B(a) (and Theorem F of \cite{CV2019}), which can be viewed as the special case of the above when $d_{\text{sat}} = 0$. When $d_{\text{sat}} = 0$ there is no transition: all non-empty singular sets fall into the non-existence regime.\\



\noindent \textbf{Organization of the paper.} The rest of the paper is organized as follows. Section~\ref{s_prelim} provides background information and preliminary results; the key subsections are Section~\ref{s_density}, which covers the density of $X_t$ and $\mu(X_t)$, and Section~\ref{s_duality}, which discusses solutions to \eqref{e_pde} with measure-valued initial data and extends the dual relationship to include finite measures. In Section~\ref{s_points} we give a probabilistic proof of Theorem~\ref{thm_pointdichotomy}(a), and in Section~\ref{s_pos} we prove Theorem~\ref{thm_positiveEverwhere}. Sections~\ref{s_flat} and \ref{s_nonflat} cover, respectively, flatness and non-flatness of $u^{\infty \mu}_t$ for general measures $\mu$, in particular Theorems~\ref{thm_flatFrost} and \ref{thm_nonflat}. Finally, we discuss solutions to \eqref{e_pde} with non-empty singular sets and prove Theorem~\ref{thm_inittrace} in Section~\ref{s_trace}.

\section{Preliminaries} \label{s_prelim}

\subsection{Transition densities}  \label{s_transitiondensities}
We denote by $p_t(x)$ the fundamental solution (or heat kernel) to the fractional heat equation on $\R^d$. That is, $p_t(x)$ is the solution to 
\begin{equation} 
\partial_t u =  \Delta_\alpha u, \,\,\,\, u_0 = \delta_0 \nonumber
\end{equation}
on $Q= (0,\infty) \times \R^d$. The semigroup $(S_t)_{t \geq 0}$ which we have already introduced is a convolution semigroup with kernel $p_t$, i.e.
\[S_t\phi(x) = \phi * p_t(x) = \int \phi(y) p_t(x-y) dy.\]
We note that for $\mu \in \cM_F(\R^d)$, $S_t \mu (x)$ can be defined in the same way with no difficulty. The kernel $p_t$ is radial and radially decreasing. In a slight abuse of notation, for $\rho>0$ we will sometimes write $p_t(\rho)$ to mean $p_t(x)$, where $|x| = \rho$.

$(S_t)_{t \geq 0}$ is the transition semigroup of the symmetric $\alpha$-stable process, so we have the following: if $W$ is a symmetric $\alpha$-stable process with law and expectation $P^W_x$ and $E^W_x$, respectively, when started at $x$, then for an appropriate class of functions (e.g. bounded and measurable),
\[ E^W_x( \phi(W_t)) = S_t \phi(x).\]
In particular, $p_t$ is also the transition density of $W$, that is
\[P^W_x(W_t \in dy) = p_t(y-x) dy.\]
The symmetric $\alpha$-stable process is self-similar, which is reflected by the scaling property for $p_t$:
\begin{equation} 
p_t(x) = t^{-\frac d \alpha} p_1(t^{-\frac 1 \alpha}x).\nonumber
\end{equation}
Finally we recall the asymptotic decay of the transition density. There are universal constants $0<c_{\ref{heatkernelbds}}<C_{\ref{heatkernelbds}}<\infty$ such that
\begin{equation} \label{heatkernelbds}
c_{\ref{heatkernelbds}}\, \frac{t^{-\frac d \alpha} }{1+|t^{-1/\alpha}x|^{d+\alpha}}\leq p_t(x) \leq C_{\ref{heatkernelbds}} \,\frac{ t^{-\frac d \alpha}}{1+|t^{-1/\alpha} x|^{d+\alpha}}.
\end{equation}
It is occasionally useful to write the above as
\begin{equation} \label{heatkernelbds2}
c_{\ref{heatkernelbds}} \left( t^{-\frac d \alpha} \wedge\frac{t}{|x|^{d+\alpha}} \right) \leq p_t(x) \leq C_{\ref{heatkernelbds}}\left( t^{-\frac d \alpha} \wedge\frac{t}{|x|^{d+\alpha}} \right),
\end{equation}
where in order to do so one may have to adjust the constants. Without loss of generality we will fix $c_{\ref{heatkernelbds}}$ and $C_{\ref{heatkernelbds}}$ so that both bounds hold.

\subsection{The density of the $(\alpha,\beta)$-superprocess} \label{s_density}
In this section we give an overview of the density of $X_t$. The main purpose is to show that we can take a version of the density which is defined almost surely at any fixed point $x \in \R^d$ and that with this version the quantity $\mu(X_t)$ from \eqref{e_muXt} is well-defined almost surely. We also discuss the regularity properties of the density in the continuous regime.

Recall from \eqref{e_Xepsilon} and \eqref{e_density_liminf} that for $x \in \R^d$ and $\epsilon > 0$,
\[X^\epsilon_t(x) = \frac{X_t(B(x,\epsilon))}{|B(x,\epsilon)|}\]
and 
\[ X_t(x) := \liminf_{\epsilon \downarrow 0} X^\epsilon_t(x).\]
Since $X_t$ is absolutely continuous, $x \to X_t(x)$ is a density for it by the Lebesgue differentiation theorem (for example see Theorem~3.21 of \cite{Folland}). Because we are interested in the behaviour of the density at fixed points and more generally on Lebesgue null sets, we require more than the standard a.e.-convergence of $X^\epsilon_t(\cdot)$ to $X_t(\cdot)$. The desired conditions are shown to hold in the next lemma. 

For $\psi : \R^d \to \R$ and $\epsilon > 0$, let $\psi_\epsilon(x) := \epsilon^{-d} \psi(\epsilon^{-1}x)$.

\begin{lemma} \label{lemma_density_convergence} Let $X_0 \in \cM_F(\R^d)$ and $t>0$ and consider $X_t$ under $P^X_{X_0}$.

\noindent(a) For every $x \in \R^d$, $X^\epsilon_t(x) \to X_t(x)$ a.s. and in $L^1$ as $\epsilon \downarrow 0$. 

\noindent(b) For every $\mu \in \cM_F(\R^d)$, $X^\epsilon_t(x) \to X_t(x)$ for $\mu$-a.e. $x$ almost surely and $\mu(X^\epsilon_t) \to \mu(X_t)$ in $L^1$, where $\mu(X_t) = \int X_t(x) \mu(dx)$. Moreover, we have
\begin{equation} \label{e_meanmeasure}
 E_{X_0}^X(\mu(X_t)) = \mu(S_t X_0).
\end{equation}

\noindent(c) If $\psi:\R^d \to [0,\infty)$ satisfies $\int \psi = 1$ and $\sup_x |\psi(x)|(1+|x|)^{d+\delta} < \infty$ for some $\delta > 0$, then the conclusions of part (a) and (b) hold when $X^\epsilon_t$ is replaced with $X_t * \psi_\epsilon$.
\end{lemma}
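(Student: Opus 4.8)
The plan is to work with the \emph{Green's function representation} \eqref{e_density_GFR}, which defines a version $\tilde X_t(x)$ of the density pointwise in $x$, to prove that $X^\epsilon_t(x)\to\tilde X_t(x)$ at every fixed $x$, and then to observe that, being a convergent net, $X^\epsilon_t(x)$ has $\liminf$ equal to its limit, so $\tilde X_t(x)=X_t(x)$ a.s.; the other assertions then follow by Fubini-type arguments and duality. The only moment tool available (heavy tails preclude an $L^2$ isometry) is the $(1+\beta)$-stable martingale-measure inequality: for $1\le p<1+\beta$,
\[ E\Big(\Big|\int_{(0,t]\times\R^d} g(s,y)\,M(d(s,y))\Big|^p\Big)\le C_p\Big(\int_0^t\!\!\int_{\R^d}|g(s,y)|^{1+\beta}\,S_sX_0(y)\,dy\,ds\Big)^{\frac{p}{1+\beta}},\]
using $E(X_s(dy))=S_sX_0(y)\,dy$, which is standard for these superprocesses. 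First I would record $E(X_t(\phi))=X_0(S_t\phi)$ (differentiate \eqref{e_Lap} in $\lambda$ at $0$, using $u^0\equiv0$); writing $X^\epsilon_t(x)=X_t(\phi^\epsilon_x)$, $\mu(X^\epsilon_t)=X_t(\mu*\phi^\epsilon_0)$, $X_t*\psi_\epsilon(x)=X_t(\psi^x_\epsilon)$ (all bounded functions), this gives $E(X^\epsilon_t(x))\to S_tX_0(x)$, $E(\mu(X^\epsilon_t))\to\mu(S_tX_0)$, etc., since $S_tX_0=p_t*X_0$ is bounded and continuous; \eqref{e_meanmeasure} then drops out once $L^1$ convergence is established.

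Next is the core estimate. Via \eqref{e_density_GFR} and a stochastic Fubini, $X^\epsilon_t(x)-\tilde X_t(x)=(S_tX_0*\phi^\epsilon_0-S_tX_0)(x)+\int(\Phi^\epsilon(s,y)-p_{t-s}(y-x))\,M(d(s,y))$ with $\Phi^\epsilon(s,y)=S_{t-s}\phi^\epsilon_x(y)=(p_{t-s}*\phi^\epsilon_0)(y-x)\to p_{t-s}(y-x)$. The deterministic term tends to $0$; by the moment inequality the stochastic term tends to $0$ in $L^p$ once $\int_0^t\!\int|\Phi^\epsilon-p_{t-s}(\cdot-x)|^{1+\beta}S_sX_0\,dy\,ds\to0$, which is dominated convergence provided $\int_0^t\!\int p_{t-s}(y-x)^{1+\beta}S_sX_0(y)\,dy\,ds<\infty$. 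I would verify the latter by splitting times: on $[0,t/2]$, $p_{t-s}$ is bounded and $\int p_u(y-x)^{1+\beta}S_sX_0(y)\,dy\le\|p_u\|_\infty^\beta\,(p_u*S_sX_0)(x)=\|p_u\|_\infty^\beta S_tX_0(x)$ is bounded; on $[t/2,t]$, $\int p_{t-s}(y-x)^{1+\beta}\,dy=c(t-s)^{-\beta d/\alpha}$ is time-integrable \emph{precisely because $\beta<\alpha/d$}, and $S_sX_0$ is bounded there. The identical computation with $\mu*\phi^\epsilon_0$ (using Young's inequality $\|p_u*\mu\|_{1+\beta}\le\|p_u\|_{1+\beta}\|\mu\|$) and with $\psi^x_\epsilon$ shows $X^\epsilon_t(x)$, $\mu(X^\epsilon_t)$, $X_t*\psi_\epsilon(x)$ are $L^p$-Cauchy as $\epsilon\downarrow0$, hence converge in $L^1$ (here $p=1$ already suffices).

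The main obstacle is upgrading to a.s.\ convergence \emph{at the fixed point} $x$, since Lebesgue differentiation only yields a.e.\ $x$. Here I would extract a rate: $X^\epsilon_t(x)-\tilde X_t(x)=|B(x,\epsilon)|^{-1}\int_{B(x,\epsilon)}(\tilde X_t(y)-\tilde X_t(x))\,dy$, and the moment inequality applied to $y\mapsto p_{t-s}(\cdot-y)-p_{t-s}(\cdot-x)$ (again splitting $[0,t/2]$, where a Lipschitz bound on $p_u$ gives a factor $|y-x|^{1+\beta}$, and $[t/2,t]$, split further into $t-s\ge\rho^\kappa$ and $t-s<\rho^\kappa$ with Young's inequality and $\beta<\alpha/d$) gives $E|\tilde X_t(y)-\tilde X_t(x)|\le\delta(|y-x|)$ with $\delta(2^{-n})$ summable. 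Borel--Cantelli yields $X^{2^{-n}}_t(x)\to\tilde X_t(x)$ a.s.; and since $|X^\epsilon_t(x)-X^{2^{-n}}_t(x)|\le 2^{d+1}|B(x,2^{-n})|^{-1}\int_{B(x,2^{-n})}|\tilde X_t(y)-\tilde X_t(x)|\,dy$ for $\epsilon\in[2^{-n-1},2^{-n}]$, whose expectation is $\le 2^{d+1}\delta(2^{-n})$ and hence summable, the oscillation over each dyadic block tends to $0$ a.s. Thus $X^\epsilon_t(x)\to\tilde X_t(x)=X_t(x)$ a.s., proving (a). Part (c) is identical with $\phi^\epsilon_x$ replaced by $\psi^x_\epsilon$: $\int\psi=1$ makes it an approximate identity, $\psi\ge0$ gives $\|\psi^x_\epsilon\|_{L^1}=1$ for Young's inequality, and $\psi(z)(1+|z|)^{d+\delta}\le C$ controls the $\psi^x_\epsilon$-mass escaping to distances $\gg\epsilon$ and the far-field heat-kernel comparisons.

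Finally, for (b): given (a), Fubini gives $\int P(X^\epsilon_t(x)\not\to X_t(x))\,\mu(dx)=0$, so a.s.\ $X^\epsilon_t(x)\to X_t(x)$ for $\mu$-a.e.\ $x$; by Fatou and the first-moment bound $\mu(X_t)=\int X_t(x)\,\mu(dx)<\infty$ a.s., and along an a.s.-convergent subsequence of the $L^1$-Cauchy family $\mu(X^\epsilon_t)$ one gets $\mu(X_t)\le L$ a.s., where $L$ is the $L^1$-limit. To identify $L=\mu(X_t)$, apply \eqref{e_Lap}: $E(e^{-\mu(X^\epsilon_t)})=e^{-X_0(u^{\mu*\phi^\epsilon_0}_t)}\to e^{-X_0(u^\mu_t)}=E(e^{-\mu(X_t)})$ by the measure-initial-data duality (Lemma~\ref{lemma_measure_duality}) and continuity of the solution map under $\mu*\phi^\epsilon_0\to\mu$ weakly; combined with $e^{-\mu(X_t)}\ge e^{-L}$ a.s.\ this forces $L=\mu(X_t)$ a.s., and then $E(\mu(X_t))=\lim E(\mu(X^\epsilon_t))=\mu(S_tX_0)$ gives \eqref{e_meanmeasure}. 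To summarise: the one genuinely delicate point is the passage from a.e.\ to every fixed $x$, which forces the argument through the Green's-function representation and the heavy-tailed moment inequality, and the standing assumption $\beta<\alpha/d$ enters exactly to make $\int_0^t(t-s)^{-\beta d/\alpha}\,ds$ converge.
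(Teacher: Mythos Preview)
Your approach via the Green's function representation and the $(1+\beta)$-stable moment inequality is genuinely different from the paper's, and the core $L^p$-Cauchy estimate is sound: the splitting of $[0,t]$ and the use of $\beta<\alpha/d$ to integrate $(t-s)^{-\beta d/\alpha}$ are exactly right. The paper instead exploits a soft but powerful fact you did not use: the laws of $X_t$ under $P^X_{X_0}$ and under any translate $P^X_{X_0+x}$ are mutually absolutely continuous (Evans--Perkins, Li--Zhou). From Lebesgue differentiation one has convergence of $X^\epsilon_t(x)$ for Lebesgue-a.e.\ $x$ almost surely; absolute continuity under translation upgrades this instantly to convergence at \emph{every} fixed $x$ almost surely (if it failed at one $x_0$ with positive probability it would fail at all $x$, contradicting the a.e.\ statement). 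No rate, no Borel--Cantelli. For $L^1$ convergence the paper then invokes a single $L^{1+\theta}$ moment bound (their Lemma~2.2, borrowed from Mytnik--Perkins) uniform in $x$ and $\epsilon$, which yields uniform integrability directly on the product space $P^X_{X_0}\otimes\mu$; this gives $\mu(X^\epsilon_t)\to\mu(X_t)$ in $L^1$ with no need to identify a limit after the fact. Your route is more hands-on and, if completed, would yield quantitative increments $E|\tilde X_t(y)-\tilde X_t(x)|$ that the paper's argument does not; the paper's route is shorter and conceptually transparent.

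There is, however, a genuine gap in your part (b). To identify the $L^1$-limit $L$ with $\mu(X_t)$ you invoke Lemma~\ref{lemma_measure_duality} to assert $e^{-X_0(u^\mu_t)}=E(e^{-\mu(X_t)})$. But in the paper Lemma~\ref{lemma_measure_duality} is \emph{proved using} Lemma~\ref{lemma_density_convergence}(c) (it passes to the limit in $X_t(\mu_n)\to\mu(X_t)$ via the very $L^1$-convergence you are establishing). So as written your argument is circular. The fix is available within your framework: the same moment inequality you already used, combined with the estimate $p_{t-s}(\cdot)^{1+\beta}\le C(t-s)^{-\beta d/\alpha}p_{t-s}(\cdot)$, shows $\sup_x E|\tilde X_t(x)|^{p}<\infty$ for $1<p<1+\beta$, hence $\sup_{\epsilon\le 1}\sup_x E|X^\epsilon_t(x)|^{p}<\infty$ by Jensen. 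This gives uniform integrability of $(\omega,x)\mapsto X^\epsilon_t(x)$ under $P^X_{X_0}\otimes\mu$; together with your pointwise-a.s.\ convergence and Fubini, you get $X^\epsilon_t\to X_t$ in $L^1(P^X_{X_0}\otimes\mu)$ directly, so $\mu(X^\epsilon_t)\to\mu(X_t)$ in $L^1$ without appealing to the duality lemma.
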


We prove this lemma at the end of the section. Part (c) is included because it is sometimes useful to approximate $X_t(x)$ using convolutions with general kernels. A particularly useful example is when $\psi = p_1$, in which case $\psi_{\epsilon^{\frac 1\alpha}} = p_\epsilon$.

We now turn our attention to the H\"older regularity of the density in the continuous regime, i.e. $d = 1$ and $\alpha > 1+\beta$ (c.f. Theorem C). The following was proved in \cite{FMW2010}.\\

\noindent \textbf{Theorem D.} [Fleischmann, Mytnik and Wachtel (2010)] \emph{Let $d=1$ and $\alpha > 1+\beta$. Let $X_0 \in \cM_F(\R^d)$ and $t>0$. Under $P^X_{X_0}$,  there is a continuous version $X_t(\cdot)$ of the density such that for every $\eta < \eta_c =  \frac{\alpha}{1+\beta} - 1$, $X_t(\cdot)$ is locally H\"older of index $\eta$, i.e.
\[\sup_{x_1,x_2 \in K ,x_1 \neq x_2} \frac{|X_t(x_1) - X_t(x_2)|}{|x_1 - x_2|^\eta} < \infty \,\,\,\, \text{ for all compact $K \subset \R$.} \] Furthermore, the value $\eta_c$ is optimal in that, for any $\eta > \eta_c$, with probability one, for any open $U \subseteq \R$, 
\[\sup_{x_1,x_2 \in U ,x_1 \neq x_2} \frac{|X_t(x_1) - X_t(x_2)|}{|x_1 - x_2|^\eta} = \infty \,\,\,\, \text{ whenever $X_t \neq 0$.} \]}\\

As we have noted in the introduction, when $d=1$ and $\alpha > 1+\beta$, the density we define in \eqref{e_density_liminf} is the same as the continuous version. The statement in the above implies the following. Let $\eta < \frac{\alpha}{1+\beta} - 1$ and $X_0 \in \cM_F(\R)$. Then
\begin{align} \label{e_localmodulus}
&P^X_{X_0}\text{-a.s., for all compact $K \subset \R$, there exists $C(K,\eta,\omega) >0$ such that for all $x,y\in K$,} \nonumber
\\& \hspace{30 mm} |X_t(x) - X_t(y)| \leq C(K,\eta,\omega) |x -y|^\eta .
\end{align}

We conclude the section with the proof of Lemma~\ref{lemma_density_convergence}. The proof uses a classical result concerning absolute continuity of the laws of superprocesses. As with Theorem A, the result originates in \cite{EP1991}, where it was proved for binary branching superprocesses. The proof for $(1+\beta)$-stable branching superprocesses appears in \cite{LZ2008}. We do not state the result in full generality, but refer the reader to Theorems 1.1 and 2.2, respectively, of \cite{EP1991} and \cite{LZ2008}. For any $X_0, \hat{X}_0 \in \cM_F(\R^d)$ and $t>0$, we have
\begin{align} \label{mutual_absolute_continuity}
\text{The laws of $X_t$ under $P^X_{X_0}$ and $P^X_{\hat{X}_0}$ are mutually absolutely continuous.}
\end{align}
We use the above when $\hat{X}_0$ is a translation of $X_0$. In particular, for $x \in \R^d$ suppose that $\hat{X}_0 = X_0+x$, where for a measure $\mu$ we define $\mu + x$ by $( \mu+x)(A) = \int 1_A(y-x)\mu(dy)$. Applying \eqref{mutual_absolute_continuity}, the laws of $X_t$ under $P^X_{X_0}$ and $P^X_{X_0+x}$ are mutually absolutely continuous. On the other hand, by translation invariance of the superprocess, the law of $X_t$ under $P^X_{X_0+x}$ is equal to the law of $ X_t - x$ under $P^X_{X_0}$. Consequently, we have that for $x \in \R^d$ and $t>0$,
\begin{align} \label{mutual_absolute_continuity_translation}
\text{The laws of $X_t$ and $X_t + x$ are mutually absolutely continuous under $P^X_{X_0}$.}
\end{align}

We will also use the following moment bound.
\begin{lemma} \label{lemma_Lp_bound}Let $0<\theta < \beta$ and $\phi \geq 0$. Then
\begin{equation}
E^X_{X_0}(X_t(\phi)^{1+\theta}) \leq 1 + C \left[ \int_0^t X_0(S_{t-s}((S_s \phi)^{1+\beta})) ds + X_0((S_t\phi)^{1+\beta}) \right] \nonumber
\end{equation}
for a constant $C = C(\alpha,d,\theta) > 0$.
\end{lemma}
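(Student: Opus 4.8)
The plan is to combine the Laplace-functional identity \eqref{e_Lap} with the classical integral representation of fractional moments. For a nonnegative random variable $Z$ and an exponent $q\in(1,2)$ one has
\[
Z^{q}=c_{q}\int_{0}^{\infty}\lambda^{-1-q}\bigl(e^{-\lambda Z}-1+\lambda Z\bigr)\,d\lambda ,
\qquad c_{q}:=\Bigl(\int_{0}^{\infty}u^{-1-q}(e^{-u}-1+u)\,du\Bigr)^{-1}\in(0,\infty),
\]
the point being that the integrand is nonnegative and integrable precisely when $1<q<2$. Applying this with $Z=X_{t}(\phi)$ and $q=1+\theta$ and integrating against $P^{X}_{X_{0}}$ (legitimate by Tonelli, since everything in sight is nonnegative), then using \eqref{e_Lap} with $\phi$ replaced by $\lambda\phi$ together with the first-moment formula $E^{X}_{X_{0}}(X_{t}(\phi))=X_{0}(S_{t}\phi)$ (a special case of \eqref{e_meanmeasure}), gives
\[
E^{X}_{X_{0}}\bigl(X_{t}(\phi)^{1+\theta}\bigr)=c_{1+\theta}\int_{0}^{\infty}\lambda^{-2-\theta}\Bigl(e^{-X_{0}(u^{\lambda\phi}_{t})}-1+\lambda X_{0}(S_{t}\phi)\Bigr)\,d\lambda .
\]

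Next I estimate the integrand in terms of the two quantities on the right-hand side of the lemma. Put $B:=X_{0}(S_{t}\phi)$ and, from the evolution equation \eqref{e_evol}, set $w^{\lambda}_{t}:=\lambda S_{t}\phi-u^{\lambda\phi}_{t}=\int_{0}^{t}S_{t-s}\bigl((u^{\lambda\phi}_{s})^{1+\beta}\bigr)\,ds\ge 0$. The elementary comparison $0\le u^{\lambda\phi}_{s}\le\lambda S_{s}\phi$ (immediate from \eqref{e_evol} and nonnegativity of solutions) yields the two bounds
\[
0\ \le\ X_{0}(w^{\lambda}_{t})\ \le\ \lambda B ,
\qquad
0\ \le\ X_{0}(w^{\lambda}_{t})\ \le\ \lambda^{1+\beta}\!\int_{0}^{t}\!X_{0}\bigl(S_{t-s}((S_{s}\phi)^{1+\beta})\bigr)\,ds\ =:\ \lambda^{1+\beta}A .
\]
Decomposing $e^{-X_{0}(u^{\lambda\phi}_{t})}-1+\lambda B=\bigl(e^{-X_{0}(u^{\lambda\phi}_{t})}-1+X_{0}(u^{\lambda\phi}_{t})\bigr)+X_{0}(w^{\lambda}_{t})$ and using $0\le e^{-a}-1+a\le\min(a,a^{2})$ for $a\ge0$ together with $0\le X_{0}(u^{\lambda\phi}_{t})\le\lambda B$, the integrand is at most $\min\bigl(\lambda B,(\lambda B)^{2}\bigr)+\min\bigl(\lambda B,\lambda^{1+\beta}A\bigr)$.

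It then remains to carry out the one-dimensional integrations, splitting the $\lambda$-range at the crossover points $\lambda B=1$ and $\lambda^{\beta}A=B$; the constraints $0<\theta<\beta<1$ are exactly what make each resulting integral $\int\lambda^{-a}\,d\lambda$ finite, and one gets
\[
E^{X}_{X_{0}}\bigl(X_{t}(\phi)^{1+\theta}\bigr)\ \le\ C_{\theta}\bigl(B^{1+\theta}+B^{\,1-\theta/\beta}A^{\,\theta/\beta}\bigr).
\]
The product $B^{1-\theta/\beta}A^{\theta/\beta}$ is then split by Young's inequality (with exponents $\beta/(\beta-\theta)$ and $\beta/\theta$) into a multiple of $B$ plus a multiple of $A$, which reduces everything to controlling the pure first-moment terms $B$ and $B^{1+\theta}$ by $1+X_{0}((S_{t}\phi)^{1+\beta})$. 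The latter follows from Jensen's inequality (so that a suitable power of $X_{0}(S_{t}\phi)$ is dominated by $X_{0}((S_{t}\phi)^{1+\beta})$) together with the elementary bounds $x^{r}\le 1+x$ for $r\in(0,1]$ and $x^{1+\theta}\le 1+x^{1+\beta}$, the additive constant $1$ absorbing the regime where these comparisons would otherwise be lossy.

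I expect the genuine work to lie in this last accounting step — tracking the various first-moment contributions thrown off by the $\lambda$-integration and checking that they all fit under $1+C[A+X_{0}((S_{t}\phi)^{1+\beta})]$ — rather than in any probabilistic input, which is confined to the Laplace identity \eqref{e_Lap}, the mean formula \eqref{e_meanmeasure}, and the comparison $u^{\lambda\phi}_{s}\le\lambda S_{s}\phi$ for the evolution equation; everything else is one-variable calculus.
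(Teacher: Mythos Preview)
The paper does not actually prove this lemma: it cites Lemma~2.1 of \cite{MP2003} (the $\alpha=2$ case) and asserts that the same argument carries over verbatim when the heat semigroup is replaced by the $\alpha$-stable one. Your approach---writing $Z^{1+\theta}$ as $c_{1+\theta}\int_0^\infty\lambda^{-2-\theta}(e^{-\lambda Z}-1+\lambda Z)\,d\lambda$, inserting the Laplace functional, and then exploiting $u^{\lambda\phi}_s\le\lambda S_s\phi$ to bound the defect $\lambda S_t\phi-u^{\lambda\phi}_t$---is exactly the argument of \cite{MP2003}, so you have reconstructed the intended proof.

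One small caveat on your final accounting step: to pass from $B^{1+\theta}$ (or $B$) to $X_0((S_t\phi)^{1+\beta})$ via Jensen you must normalise $X_0$ to a probability measure, and this produces a factor of $X_0(1)^\beta$. So the constant you obtain depends on $X_0(1)$ as well as on $(\alpha,\beta,d,\theta)$, not only on $(\alpha,d,\theta)$ as the lemma asserts. This is not a defect in your argument---it is an imprecision in the statement of the lemma (note that $\beta$ is also missing from the list of dependencies), and it is harmless for the only use the paper makes of the lemma, namely the uniform-integrability step in Lemma~\ref{lemma_density_convergence}, where $X_0$ is fixed throughout.
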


The $\alpha = 2$ case of this result appears as Lemma 2.1 of \cite{MP2003}. The same argument used in \cite{MP2003} also works in the $\alpha \in (0,2)$ when one replaces the heat semigroup with the fractional heat semigroup, so we omit the proof and continue with the proof of Lemma~\ref{lemma_density_convergence}. The arguments used to bound moments below are also borrowed from \cite{MP2003}.

\begin{proof}[Proof of Lemma~\ref{lemma_density_convergence}] Fix $t>0$ and $X_0 \in \cM_F(\R^d)$. Let $\psi:\R^d\to [0,\infty)$ satisfy $\int \psi = 1$ and $\sup_x |\psi(x)|(1+|x|)^{d+\delta} < \infty$ for some $\delta >0$ and recall that $\psi_\epsilon(x) = \epsilon^{-d}\psi(\epsilon^{-1}x)$. These conditions include the case where $\psi$ is the normalized indicator function of the unit ball. A priori this is the case we are interested in, but we carry out the analysis for the general case as this is required for part (c).

Let $X^{\epsilon,\psi}_t(x) = X_t * \psi_\epsilon(x)$. Since $X_t$ is absolutely continuous almost surely, we have
\begin{equation} \label{e_ae_convergence1}
\text{$P^X_{X_0}$-almost surely, $X_t^{\epsilon,\psi}(x)$ converges for Lebesgue-a.e. $x\in\R^d$ as $\epsilon \downarrow 0$.}
\end{equation}
If $\psi$ is the normalized indicator function of the unit ball, the above is simply Lebesgue's differentation theorem. Otherwise, the result follows from Theorem 8.15 of \cite{Folland}.

We now show that the convergence holds pointwise almost surely. Suppose there exists $x_0 \in \R^d$ such that $P^X_{X_0}(\{X_t^{\epsilon,\psi}(x_0) \text{ does not converge as }  \epsilon \downarrow 0\}) > 0$. By \eqref{mutual_absolute_continuity_translation} this implies that \linebreak$P^X_{X_0}(\{X_t^{\epsilon,\psi}(x) \text{ does not converge as } \epsilon \downarrow 0\}) > 0$ for all $x \in \R^d$. However, by Fubini's theorem this implies that the set of points for which $X_t^{\epsilon,\psi}(x)$ does not converge has positive Lebesgue measure a.s., which contradicts \eqref{e_ae_convergence1}. Hence we must have that
\begin{equation} \label{e_ae_convergence2}
\text{$X_t^{\epsilon,\psi}(x)$ converges $P^X_{X_0}$-a.s. as $\epsilon \downarrow 0$ for all $x\in \R^d$. }
\end{equation}
Thus we have shown the almost sure convergence stated in part (a). Before proving the $L^1$ convergence and part (b), we show how absolute continuity also implies the a.s. convergence in part (c). When $\psi$ is the normalized indicator function of a unit ball, we denote $X^{\epsilon,\psi}_t(x)$ by $X^\epsilon_t(x)$. Let $\psi$ be any other function satisfying the conditions as above. If there exists $x_0 \in \R^d$ such that $P^X_{X_0}(\liminf_{\epsilon \downarrow 0} X_t^\epsilon(x_0) \neq \liminf_{\epsilon \downarrow 0} X_t^{\epsilon,\psi}(x_0))>0$, then \eqref{mutual_absolute_continuity_translation} implies that the probability is positive when we replace $x_0$ with $x$ for all $x \in \R^d$. By Fubini's theorem, $\liminf_{\epsilon \downarrow 0} X_t^\epsilon(x) \neq \liminf_{\epsilon \downarrow 0} X_t^{\epsilon,\psi}(x)$ on a set of positive Lebesgue measure almost surely. Since $\liminf_{\epsilon \downarrow 0} X_t^\epsilon(\cdot)$ and $\liminf_{\epsilon \downarrow 0} X_t^{\epsilon,\psi}(\cdot)$ are both densities for $X_t$, this is a contradiction. Hence $\liminf_{\epsilon \downarrow 0} X_t^\epsilon(x) = \liminf_{\epsilon \downarrow 0} X_t^{\epsilon,\psi}(x)$ a.s. for every $x \in \R^d$. The left hand side is simply equal to $X_t(x)$, and so for all $\psi$ satisfying the stated conditions, \eqref{e_ae_convergence2} can be improved to
\begin{equation}
\text{$X_t^{\epsilon,\psi}(x) \to X_t(x)$ $P^X_{X_0}$-a.s. as $\epsilon \downarrow 0$ for all $x\in \R^d$, } \nonumber
\end{equation}
which proves that the almost sure convergence in (a) holds for all $\psi$ from (c). Furthermore, the above implies that the limit of $X^{\epsilon,\psi}_t(x)$ does not depend on the choice of $\psi$. Given this, we can work with general $\psi$ and prove the remaining claims of (a) and (b) simultaneously with (c).

In order to establish that $X_t^{\epsilon,\psi}(x) \to X_t(x)$ in $L^1$, we will show $L^p$-boundedness of the quantity $X_t^{\epsilon,\psi}(x)$ for $p = 1 + \theta$ with $\theta  <\beta$. To do so we apply Lemma~\ref{lemma_Lp_bound}. Let $\psi_\epsilon^x(\cdot) = \psi_\epsilon(\cdot - x)$ and note that $X_t^{\epsilon,\psi}(x) = X_t(\psi_\epsilon^x)$. For $\theta < \beta$, we have
\begin{align} \label{e_Lpbd1}
E^X_{X_0}((X_t^{\epsilon,\psi}(x))^{1+\theta}) \leq 1 + C \left[ \int_0^t X_0(S_{t-s}((S_s \psi_\epsilon^x)^{1+\beta})) ds + X_0((S_t \psi^x_{\epsilon})^{1+\beta}) \right]. 
\end{align}
Since $\int \psi^x_{\epsilon} = 1$, we can view $\psi_\epsilon^x$ as the density of a random variable, which we will denote $Y$. Thus for $s>0$,
\begin{align} \label{e_Lpbd11}
(S_s \psi_\epsilon^x)(z)^{1+\beta} = \left[\int p_s(y-z) \psi_\epsilon^x(y) dy \right]^{1+\beta} &= E^Y(p_s(Y-z))^{1+\beta} \nonumber \\ 
&\leq E^Y(p_s(Y-z)^{1+\beta})\nonumber \\
& = \int p_s(y-z)^{1+\beta} \psi_\epsilon^x(y) dy,
\end{align}
We also have
\[p_s(y-z)^{1+\beta} = p_s(y-z) p_s(y-z)^\beta \leq C p_s(y-z) s^{-\frac{d\beta}{\alpha}}\]
by \eqref{heatkernelbds2}. Hence by \eqref{e_Lpbd11} and the above,
\begin{align} \label{e_Lpbd2}
X_0(S_{t-s}((S_s \psi_\epsilon^x)^{1+\beta})) &\leq Cs^{-\frac{d\beta}{\alpha}} \iiint p_{t-s}(z-w) p_s(y-z) \psi_\epsilon^x(y)\, dy\, dz\, X_0(dw) \nonumber
\\ &= Cs^{-\frac{d\beta}{\alpha}} \iint p_t(w-y) \psi_\epsilon^x(y) \, dy \, X_0(dw) \nonumber
\\ &\leq C X_0(1) s^{-\frac{d\beta}{\alpha}} t^{-\frac d \alpha}. 
\end{align}
Since $\beta < \frac \alpha d$, the power of $s$ is integrable over $s \in [0,t]$, which implies that the first term in the square brackets in \eqref{e_Lpbd1} is uniformly bounded in $\epsilon \in (0,1]$. The second term can be handled the same way and is in fact easier to control. Thus $E^X_{X_0}((X_t^{\epsilon,\psi})^{1+\theta})$ is uniformly bounded for $\epsilon \in (0,1]$, which implies that the family $\{X_t^{\epsilon,\psi}(x) : \epsilon \in (0,1]\}$ is uniformly integrable. Since $X_t^{\epsilon,\psi}(x)$ converges almost surely to $X_t(x)$, this implies that the convergence is also in $L^1(P^X_{X_0})$.

We now fix $\mu \in \cM_F(\R^d)$ and show $L^1$ convergence of $\mu(X_t^{\epsilon,\psi})$ to $\mu(X_t)$. Note that the bound in \eqref{e_Lpbd2} does not depend on $x$. Thus, rather than consider $L^{1+\theta}$-boundedness of $X_t(\psi^x_{\epsilon})$ for fixed $x$ with respect to $P^X_{X_0}$, we can fix $\mu \in \cM_F(\R^d)$ and consider $L^{1+\theta}$-boundedness of $(\omega, x) \to X_t(\psi^x_{\epsilon})(\omega)$ with respect to $P^X_{X_0} \otimes \mu$. Because the bounds are uniform in $x$ and $\mu$ is finite, the argument requires no modification. Since $X_t(\psi^x_\epsilon) \to X_t(x)$ a.s. for every $x$, it follows that $X_t(\psi^x_\epsilon) \to X_t(x)$ in $L^1(P^X_{X_0} \otimes \mu)$ (as a function of $(\omega,x)$). This implies that $\mu(X^{\epsilon,\psi}_t) \to \mu(X_t)$ in $L^1$.

It remains to show the moment formula \eqref{e_meanmeasure}. First, let $\phi : \R^d \to [0,\infty)$ be bounded and measurable. The mean measure formula for superprocesses gives
\begin{equation} \label{e_mm1}
 E^X_{X_0}(X_t(\phi)) = X_0(S_t \phi).
\end{equation}
Now fix $\mu \in \cM_F(\R^d)$. We have
\begin{align}
\mu(X^{\epsilon,\psi}_t) = \iint \psi_\epsilon(x-y) X_t(dy)  \mu(dx) = \iint \mu_\epsilon(x) X_t(dx) = X_t(\mu_\epsilon), \nonumber
\end{align}
where $\mu_\epsilon = \mu * \hat{\psi}_\epsilon$ with $\hat{\psi}_\epsilon(\cdot) = \psi_\epsilon(-\cdot)$. Since $\mu_\epsilon$ is a bounded and measurable function, by \eqref{e_mm1} the above implies
\begin{equation}
E^X_{X_0}(\mu(X^{\epsilon,\psi}_t)) = X_0(S_t \mu_\epsilon). \nonumber
\end{equation}
Because $\mu(X^{\epsilon,\psi}_t) \to \mu(X_t)$ in $L^1(P^X_{X_0})$, the left hand side converges to $E^X_{X_0}(\mu(X_t))$ as $\epsilon \downarrow 0$. We have $S_t \mu_\epsilon \leq \mu(1) t^{-\frac d \alpha}$ for all $\epsilon > 0$, so the right hand side converges to $X_0(S_t \mu)$ by Dominated Convergence. Since $X_0(S_t\mu) = \mu(S_t X_0)$, this proves \eqref{e_meanmeasure} and the proof is complete.
\end{proof}

\subsection{The fractional PDE and $\mu(X_t)$}  \label{s_duality}
In this section we extend the duality between $X_t$ and solutions to the evolution equation \eqref{e_evol}. Recall from \eqref{e_Lap} that in its basic form, the dual relationship states that $E^X_{X_0}(\exp(-X_t(\phi)) = \exp(-X_0(u^\phi_t))$, where for bounded and measurable $\phi \geq 0$, $u^\phi_t$ is the unique solution to \eqref{e_evol} with initial data $\phi$. The purpose of this section is to extend this relationship to allow $\phi$ to be replaced with a finite measure when $\beta < \frac \alpha d$. We also introduce weak solutions to \eqref{e_pde} and some of their properties.

The integral equation \eqref{e_evol} is a mild form of the PDE \eqref{e_pde}. We will work with weak solutions. Recall that $Q = (0,\infty) \times \R^d$. Let $C^{1,2}_c(Q)$ denote the space of compactly supported functions on $Q$ which are once and twice continuously differentiable in time and space, respectively. For $T>0$, let $Q_T = (0,T] \times \R^d$ and $\bar{Q}_T = [0,T]\times \R^d$, and let $C^{1,2}_b(\bar{Q}_T)$ denote the space of bounded functions on $\bar{Q}_T$ with bounded, continuous derivatives up to order one in time and order two in space. For $p \geq 1$, we let $L^p_{\text{loc}}(Q)$ denote the space of functions $\phi$ such that $\int_K |\phi|^p < \infty$ for every compact $K \subset Q$.

\begin{definition} \label{def_weaksol} A function $u: Q \to \R$ is a weak solution to \eqref{e_pde} if $(t,x) \to u(t,x)$ is continuous, $u \in L^{1+\beta}_{\text{loc}}(Q)$, and 
\begin{equation} \label{e_weaksolibp_compact}
\int_{Q} (u(t,x)[-\partial_t \xi(t,x) - \Delta_\alpha \xi(t,x)]) +  u(t,x)^{1+\beta} \xi(t,x) \,dx  dt = 0 
\end{equation}
for all $\xi \in C^{1,2}_c(Q)$.
\end{definition}
For measure-valued initial data, the PDE problem of interest is
\begin{equation} \label{e_pde_measureproblem}
\begin{cases} &\partial_t u = \Delta_\alpha u -  u^{1+\beta} \hspace{4 mm} \text{ for $(t,x) \in Q$},
\\ & u_0 = \mu \end{cases}
\end{equation}
for $\mu \in \cM_F(\R^d)$. In the following, recall that by convergence in $\cM_F(\R^d)$ we mean weak convergence of measures.
\begin{definition} \label{def_weaksol_IVP}
For $\mu \in \cM_F(\R^d)$, we say that $u : Q \to \R^+$ is a weak solution to \eqref{e_pde_measureproblem} if it is a weak solution to \eqref{e_pde}, $u \in L^1(Q_T) \cap L^{1+\beta}(Q_T)$ for all $T>0$, and $u_t \to \mu$ in $\cM_F(\R^d)$ as $t \downarrow 0$.
\end{definition}

\begin{proposition} \label{prop_weaksolexist} For $\mu \in \cM_F(\R^d)$, there exists a unique weak solution $u:Q \to [0,\infty)$ to \eqref{e_pde_measureproblem}. Moreover, for $T>0$ we have
\begin{align} \label{weaksol_ibp}
&\int_{Q_T} (u(t,x)[-\partial_t \xi(t,x) - \Delta_\alpha \xi(t,x)]) + u(t,x)^{1+\beta} \xi(t,x) \,dx  dt \nonumber
\\ &\hspace{5 mm}= \int_{\R^d} \xi(0,x) \mu(dx)  - \int_{\R^d} u(T,x) \xi(T,x) dx
\end{align}
for all $\xi \in C^{1,2}_b(\bar{Q}_T)$.
\end{proposition}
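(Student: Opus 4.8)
The plan is to construct the solution probabilistically, establish the mild (Duhamel) form of the equation for it by approximating $\mu$ with bounded functions, read the regularity and boundary behaviour off that identity, and then prove uniqueness by showing that every weak solution satisfies the same mild identity. Since $\mu(X_t)$ is well-defined (Lemma~\ref{lemma_density_convergence}) and $1-e^{-a}\le a$, set $u^\mu_t(x):=\N_x(1-e^{-\mu(X_t)})$; then $0\le u^\mu_t(x)\le \N_x(\mu(X_t))=S_t\mu(x)<\infty$, using the mean-measure identity $\N_x(X_t(\phi))=S_t\phi(x)$ (differentiate \eqref{e_Lapcanon} at $0$, as in \eqref{e_meanmeasure}). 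First I would prove the mild identity
\[
u^\mu_t(x) = S_t\mu(x) - \int_0^t S_{t-s}\big((u^\mu_s)^{1+\beta}\big)(x)\,ds, \qquad (t,x)\in Q,
\]
by approximating $\mu$ with $\phi_n:=\mu*p_{1/n}$, which is bounded, continuous and in $L^1$. For each $n$, $u^{\phi_n}$ solves \eqref{e_evol} classically and equals $\N_x(1-e^{-X_t(\phi_n)})$ by \eqref{e_Lapcanon}; since $X_t(\phi_n)=\mu(X_t*p_{1/n})\to\mu(X_t)$ — the $\N_x$-version of Lemma~\ref{lemma_density_convergence}(c) with $\psi=p_1$, which follows by the same proof together with the cluster representation and \eqref{e_canon_survive} — bounded convergence gives $u^{\phi_n}_t(x)\to u^\mu_t(x)$ for every $(t,x)$. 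One then passes to the limit in the mild identity for $u^{\phi_n}$: $S_t\phi_n=S_{t+1/n}\mu\to S_t\mu$ pointwise, while in the Duhamel term the bound $u^{\phi_n}_s\le S_s\phi_n$ and the estimate $\int_0^t\!\int (S_s\mu)^{1+\beta}<\infty$ — valid because $(S_s\mu)^{1+\beta}\le C\mu(\R^d)^\beta s^{-d\beta/\alpha}S_s\mu$ and $\beta<\tfrac\alpha d$, exactly the integrability used in the proof of Lemma~\ref{lemma_density_convergence} — justify dominated convergence.

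Everything else then follows from the mild identity. Continuity of $(t,x)\mapsto u^\mu_t(x)$ on $Q$ is standard. Integrating the identity over $\R^d$ gives the mass identity $\int u^\mu_t\,dx+\int_0^t\!\int (u^\mu_s)^{1+\beta}\,dy\,ds=\mu(\R^d)$, whence $u^\mu\in L^1(Q_T)\cap L^{1+\beta}(Q_T)$ for all $T>0$. Testing against $\xi\in C_c(\R^d)$ gives $\int\xi u^\mu_t\,dx=\int (S_t\xi)\,d\mu-\int\xi\!\int_0^t S_{t-s}\big((u^\mu_s)^{1+\beta}\big)\,ds\,dx$; the first term tends to $\int\xi\,d\mu$ and the second is bounded by $\|\xi\|_\infty\int_0^t\!\int (u^\mu_s)^{1+\beta}\to 0$, so, with the mass convergence, $u^\mu_t\to\mu$ in $\cM_F(\R^d)$. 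That $u^\mu$ is a weak solution in the sense of Definition~\ref{def_weaksol} follows from the mild identity by a routine manipulation (Fubini, self-adjointness of $(S_t)$, $\partial_t S_t\phi=\Delta_\alpha S_t\phi$). For \eqref{weaksol_ibp} I would first check it when $\mu=\phi$ is a bounded continuous function (such as $\phi_n$), where $u^\phi$ is a classical solution continuous up to $t=0$ and \eqref{weaksol_ibp} is plain integration by parts on $Q_T$ (with the spatial-truncation error $O(R^{-\alpha})$ absorbed using $u^\phi\in L^1(Q_T)$), then pass to the limit along $\phi_n=\mu*p_{1/n}$, all terms converging by the bounds above.

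For uniqueness I would first show that \eqref{weaksol_ibp} holds for \emph{every} weak solution $v$ of \eqref{e_pde_measureproblem}: extend the identity of Definition~\ref{def_weaksol} from $\xi\in C^{1,2}_c(Q)$ to $\xi\in C^{1,2}_b(\bar{Q}_T)$ by approximating $\xi$ with $\xi(t,x)\theta_\delta(t)\chi_R(x)$ — the spatial cutoff error vanishes as $R\to\infty$ since $v\in L^1(Q_T)$ and $|\Delta_\alpha(\xi\chi_R)-\Delta_\alpha\xi|=O(R^{-\alpha})$, and the time cutoffs near $0$ and $T$ produce $\int\xi(0,\cdot)\,d\mu$ and $-\int v(T,\cdot)\xi(T,\cdot)\,dx$ from $v_t\to\mu$ and continuity of $v$. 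Inserting into \eqref{weaksol_ibp} test functions approximating $(s,y)\mapsto p_{t-s}(x-y)$ then shows $v$ satisfies the mild identity. Subtracting the mild identities for two weak solutions $u,v$ and using $|u_s^{1+\beta}-v_s^{1+\beta}|\le (1+\beta)(u_s\vee v_s)^\beta|u_s-v_s|$ together with $\|S_r f\|_{L^{1+\beta}}\le C r^{-\frac{d\beta}{(1+\beta)\alpha}}\|f\|_{L^1}$, where $\tfrac{d\beta}{(1+\beta)\alpha}<1$ because $\beta<\tfrac\alpha d$, a singular Gronwall argument (Henry's lemma) applied to $t\mapsto\|u_t-v_t\|_{L^{1+\beta}}$, which lies in $L^{1+\beta}(0,T)$, forces $u=v$. (Equivalently, $w=u-v$ weakly solves $\partial_t w=\Delta_\alpha w-cw$ with $c\ge 0$ and zero initial trace, and a Kato-inequality argument for $\Delta_\alpha$ gives $w\equiv 0$.)

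The step I expect to be the main obstacle is the uniform control of the nonlinearity near $t=0$: measure data makes $S_t\mu$ singular — and unbounded when $\mu$ charges points — so both the existence limit and the derivation of \eqref{weaksol_ibp} rest on $\int_0^T s^{-d\beta/\alpha}\,ds<\infty$, i.e. precisely the standing hypothesis $\beta<\tfrac\alpha d$, used exactly as in the proof of Lemma~\ref{lemma_density_convergence}. A secondary technical point is carrying the density-convergence statement of Lemma~\ref{lemma_density_convergence} over from $P^X_{X_0}$ to the $\sigma$-finite canonical measure $\N_x$, which is what identifies $\lim_n u^{\phi_n}_t$ with $\N_x(1-e^{-\mu(X_t)})$.
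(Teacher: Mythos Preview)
The paper does not actually prove this proposition: it cites Theorem~1.1 of \cite{CVW2016} for existence and uniqueness, remarks that \eqref{weaksol_ibp} follows from Definition~\ref{def_weaksol_IVP} by ``a short argument which we omit,'' and defers continuity to the mild-equation correspondence in Lemma~\ref{lemma_integ_pde}(b). Your proposal is therefore a genuinely different, self-contained route. You build the solution probabilistically as $u^\mu_t(x)=\N_x(1-e^{-\mu(X_t)})$, derive the mild identity by approximating $\mu$ by $\mu*p_{1/n}$, and read off regularity, $L^1\cap L^{1+\beta}$ bounds, weak convergence to $\mu$, and \eqref{weaksol_ibp} from that identity; uniqueness you obtain by showing every weak solution satisfies the mild identity (this step coincides with the paper's proof of Lemma~\ref{lemma_integ_pde}(b)) and then running a singular Gronwall. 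The payoff of your approach is that it simultaneously proves Proposition~\ref{prop_weaksolexist}, Lemma~\ref{lemma_integ_pde}, and Lemma~\ref{lemma_measure_duality} in one stroke, whereas the paper separates the PDE result (cited) from the probabilistic duality (proved afterwards in Lemma~\ref{lemma_measure_duality} using stability under $\mu_n\to\mu$). The cost is the two technical points you already flag: transferring Lemma~\ref{lemma_density_convergence} from $P^X_{X_0}$ to $\N_x$ (handled via \eqref{clustercanon} and finiteness of $\N_x$ on $\{X_t\neq 0\}$), and---one point you do not mention---in the uniqueness Gronwall you implicitly need $v_s\le C S_s\mu$ (or at least $\|v_s\|_\infty\le C s^{-d/\alpha}$) for an \emph{arbitrary} weak solution $v$, not just the constructed one; this is \eqref{e_heatcomparison} in the paper, again quoted from \cite{CVW2016}, so to keep your argument self-contained you would need to supply it (e.g.\ from the mild identity you have just derived for $v$, which gives $v_t\le S_t\mu$ directly).
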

We will denote the unique solution to \eqref{e_pde_measureproblem} by $u^\mu_t(x)$ or $u^\mu(t,x)$. The above is proved in Theorem 1.1 of \cite{CVW2016}. The authors of that work use a slightly different definition which incorporates \eqref{weaksol_ibp}. However, a short argument which we omit shows that given a solution in the sense of Definition~\ref{def_weaksol_IVP}, \eqref{weaksol_ibp} holds for all $\xi \in C^{1,2}_b(\bar{Q}_T)$. The definition of a solution used in \cite{CVW2016} also does not require continuity, but it can be verified that $u^\mu_t(x)$ is jointly continuous, for example using its correspondence with the solution to the integral equation given in Lemma~\ref{lemma_integ_pde}(b) below. (The proof of Lemma~\ref{lemma_integ_pde}(b) requires only a small modification if one does not assume a priori that $u^\mu_t(x)$ is continuous.) The following stability result holds as a consequence of Theorem 1.1 of \cite{CVW2016}.

\begin{lemma} \label{lemma_stability} (a) If $\mu \in \cM_F(\R^d)$ and $(\mu_n)_{n \geq 1}$ is a sequence of measures such that $\mu_n \to \mu$ in $\cM_F(\R^d)$, then $u^{\mu_n}_t(x) \to u^\mu_t(x)$ locally uniformly in $Q$.

\noindent (b) The map $\mu \to u^\mu_t$ is increasing.
\end{lemma}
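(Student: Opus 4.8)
The plan is to establish (a) by a compactness-plus-uniqueness argument and then to deduce (b) from it, after reducing the monotonicity to the case of bounded initial data, where it is immediate from the probabilistic representation \eqref{e_Lapcanon}. Throughout one uses that $\|\mu_n\|\to\|\mu\|$, so $M:=\sup_n\|\mu_n\|<\infty$, and that each $u^{\mu_n}$ coincides with the mild solution, i.e. solves \eqref{e_evol} with $S_t\phi$ replaced by $S_t\mu_n$ (Lemma~\ref{lemma_integ_pde}(b)).

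For (a), I would first record the uniform a priori bound obtained by dropping the nonnegative integral term in the mild equation: $0\le u^{\mu_n}_t(x)\le S_t\mu_n(x)\le c\,M\,t^{-d/\alpha}$ for all $n$, by \eqref{heatkernelbds}. Restarting the mild equation at an arbitrary time $t_0/2>0$ and combining this bound with the fractional smoothing estimate $\|S_r g\|_{C^\gamma}\le C_\gamma\|g\|_\infty r^{-\gamma/\alpha}$ for some $\gamma\in(0,\alpha\wedge 1)$, I would derive a uniform-in-$n$ modulus of continuity for $u^{\mu_n}$ on each slab $[t_0,T]\times\R^d$; Arzel\`a--Ascoli and a diagonal argument over $t_0=T^{-1}=j^{-1}$ then extract a subsequence converging locally uniformly on $Q$ to a continuous $v\ge0$. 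Since every $\xi\in C^{1,2}_c(Q)$ is supported in a slab on which the convergence and the uniform bound hold, one may pass to the limit in \eqref{e_weaksolibp_compact}, so $v$ is a weak solution of \eqref{e_pde} in the sense of Definition~\ref{def_weaksol}. To identify its initial data, pair the mild equation for $u^{\mu_n}$ with $f\in C_c^+(\R^d)$, use $\int (S_rg)h=\int g(S_rh)$ and $(u^{\mu_n}_s)^{1+\beta}\le(cM s^{-d/\alpha})^{\beta}S_s\mu_n$ to get
\[\Big|\int f(x)u^{\mu_n}_t(x)\,dx-\int S_tf\,d\mu_n\Big|\le C\|f\|_\infty M^{1+\beta}\,t^{\,1-\beta d/\alpha},\]
uniformly in $n$, the exponent being positive because $\beta<\alpha/d$; taking $f\equiv1$ in the same computation controls $\big|\|\mu_n\|-\int u^{\mu_n}_t\,dx\big|$. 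Letting $n\to\infty$ (the left term by local uniform convergence, the middle since $S_tf\in\cB^+_b$ and $\mu_n\to\mu$ weakly) and then $t\downarrow0$ shows $\int f\,v_t\to\int f\,d\mu$ for all $f\in C_c^+(\R^d)$ and $\int v_t\,dx\to\|\mu\|$, hence $v_t\to\mu$ in $\cM_F(\R^d)$; Fatou applied to the same bounds yields $v\in L^1(Q_T)\cap L^{1+\beta}(Q_T)$. Thus $v$ solves \eqref{e_pde_measureproblem}, so $v=u^\mu$ by the uniqueness in Proposition~\ref{prop_weaksolexist}, and since every subsequence has a further subsequence with this same limit, $u^{\mu_n}\to u^\mu$ locally uniformly on $Q$.

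For (b), given $\mu\le\nu$ I would first treat bounded data: for $\phi,\psi\in\cB^+_b$ with $\phi\le\psi$ one has $X_t(\phi)\le X_t(\psi)$ pointwise under $\N_x$, so \eqref{e_Lapcanon} gives $u^\phi_t(x)=\N_x(1-e^{-X_t(\phi)})\le\N_x(1-e^{-X_t(\psi)})=u^\psi_t(x)$. Applying this to the mollifications $\mu_k:=S_{1/k}\mu\le S_{1/k}\nu=:\nu_k$ (bounded, in $L^1(\R^d)$, and weakly convergent to $\mu$ and $\nu$ respectively) gives $u^{\mu_k}_t\le u^{\nu_k}_t$ for every $k$, and part (a) lets me pass to the limit $k\to\infty$ to conclude $u^\mu_t\le u^\nu_t$.

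I expect the main obstacle to be the step in (a) that pins down the initial trace of the subsequential limit $v$: the convergence $u^{\mu_n}_t\to\mu_n$ as $t\downarrow0$ must be made quantitative and \emph{uniform in $n$}, which is exactly the role of the displayed estimate, and one should be careful with the equicontinuity input when $\alpha\le1$, where the gradient of the stable kernel is not time-integrable, so that one must work with H\"older norms of order $\gamma<\alpha\wedge1$ rather than Lipschitz norms. (If Theorem~1.1 of \cite{CVW2016} is quoted together with its continuous-dependence statement, then part (a) is immediate and only (b) requires the argument above.)
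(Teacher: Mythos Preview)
Your proposal is correct. The paper does not give its own proof of this lemma: it simply records that (a) is a consequence of Theorem~1.1 of \cite{CVW2016} (exactly the possibility you flag in your final parenthetical), and later, in Remark~\ref{remark_comp_prin}, deduces (b) from the probabilistic representation $u^\mu_t(x)=\N_x(1-e^{-\mu(X_t)})$ of Lemma~\ref{lemma_measure_duality}, whose proof in turn uses (a) via the mollification $\mu_n=S_{1/n}\mu$. Your route to (b) is therefore essentially the paper's, just carried out directly without first packaging the measure-level Laplace formula as a separate lemma.

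Your self-contained argument for (a) is sound and more informative than a bare citation. Two small points worth tightening: in the step ``letting $n\to\infty$'' for the total-mass control, local uniform convergence alone does not give $\int u^{\mu_n}_t\,dx\to\int v_t\,dx$; you should instead combine Fatou (giving $\int v_t\le\|\mu\|$ for every $t$, since $\int u^{\mu_n}_t\le\|\mu_n\|$ from the mild equation) with the lower bound $\liminf_{t\downarrow0}\int v_t\ge\sup_{f\in C_c^+,\,f\le1}\int f\,d\mu=\|\mu\|$ coming from the compactly supported test functions you already handled. Second, your caution about working with H\"older exponents $\gamma<\alpha\wedge1$ when $\alpha\le1$ is well placed and is indeed what makes the equicontinuity step go through.
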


Solutions of \eqref{e_pde_measureproblem} are bounded above by solutions of the homogeneous fractional heat equation with the same initial data. Again by Theorem 1.1 of \cite{CVW2016}, we have
\begin{equation} \label{e_heatcomparison}
u^\mu_t(x) \leq S_t \mu(x) \text{    for $(t,x) \in Q$,}
\end{equation}
where $(t,x) \to S_t \mu(x)$ is the unique solution to $\partial_t v = \Delta_\alpha v$ on $Q$ with $v_0 = \mu$ (see Theorems 3.1 and 5.1 of \cite{BSV2017}). 

\begin{lemma}\label{lemma_integ_pde} Let $\mu \in \cM_F(\R^d)$. (a) The integral equation
\begin{equation}\label{e_integ_measure}
u_t(x) = S_t \mu (x) - \int_0^t S_{t-s}( u_s^{1+\beta} )(x) \, ds, \,\,\,\,\, (t,x) \in Q
\end{equation}
has a unique, non-negative, jointly continuous solution in $L^1(Q_T) \cap L^{1+\beta}(Q_T)$ for all $T>0$. 

\noindent (b) The weak solution to \eqref{e_pde_measureproblem} with initial data $\mu$ and the solution to \eqref{e_integ_measure} are equal. \end{lemma}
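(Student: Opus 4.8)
The plan is to establish part~(a) by approximation and then obtain part~(b) essentially for free. For existence in~(a), set $\mu_n = \mu * p_{1/n}$, so that $\mu_n \in \cB^+_b$, $\mu_n\,dx \to \mu$ in $\cM_F(\R^d)$, and $S_t\mu_n = S_{t+1/n}\mu$. For bounded initial data, \eqref{e_evol} has a unique bounded, jointly continuous solution (Theorem~4.4.1 of \cite{Dawson}), which, being bounded, coincides with the weak solution $u^{\mu_n}$ of Proposition~\ref{prop_weaksolexist}; thus
\[
u^{\mu_n}_t(x) = S_{t+1/n}\mu(x) - \int_0^t S_{t-s}\big((u^{\mu_n}_s)^{1+\beta}\big)(x)\,ds .
\]
By Lemma~\ref{lemma_stability}(a), $u^{\mu_n}_t(x)\to u^\mu_t(x)$ locally uniformly on $Q$, and by \eqref{e_heatcomparison}, $0\le u^{\mu_n}_s\le S_{s+1/n}\mu$. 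I would pass to the limit $n\to\infty$ term by term: the left side tends to $u^\mu_t(x)$ and the first term on the right to $S_t\mu(x)$ (continuity of $r\mapsto p_r$), leaving only the nonlinear term.

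For the nonlinear term I would split $\int_0^t = \int_0^\epsilon + \int_\epsilon^t$. On $[0,\epsilon]$, the Jensen-type estimate used in the proof of Lemma~\ref{lemma_density_convergence}, together with $p_r(y)^\beta \le C r^{-d\beta/\alpha}$ from \eqref{heatkernelbds}, bounds $\int_0^\epsilon S_{t-s}\big((S_{s+1/n}\mu)^{1+\beta}\big)(x)\,ds$ by a constant (depending only on $\mu$, $x$, $t$) times $\int_0^\epsilon s^{-d\beta/\alpha}\,ds = O(\epsilon^{1-d\beta/\alpha})$, uniformly in $n$; this is where the standing hypothesis $\beta<\alpha/d$ enters, and the same bound controls the corresponding term built from $u^\mu$. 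On $[\epsilon,t]$, the two-sided bounds \eqref{heatkernelbds} yield $S_{s+1/n}\mu(y)\le C(\epsilon,t)\,S_{2t}\mu(y) =: G(y)$ for all large $n$ and all $s\ge\epsilon$, with $G$ bounded and in $L^1\cap L^{1+\beta}$; hence $u^{\mu_n}_s(y)^{1+\beta}p_{t-s}(x-y)\le G(y)^{1+\beta}p_{t-s}(x-y)$, which is integrable over $[\epsilon,t]\times\R^d$, so dominated convergence together with the a.e.\ convergence $u^{\mu_n}_s(y)\to u^\mu_s(y)$ handles $\int_\epsilon^t$. Letting $\epsilon\downarrow0$ shows $u^\mu$ solves \eqref{e_integ_measure}; continuity, non-negativity, and membership in $L^1(Q_T)\cap L^{1+\beta}(Q_T)$ then follow from $u^\mu_t\le S_t\mu$ and $\int_0^T\!\int(S_t\mu)^{1+\beta}\le C_\mu\int_0^T t^{-d\beta/\alpha}\,dt<\infty$.

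For uniqueness in~(a), if $u,v$ are two such solutions then $0\le u_t,v_t\le S_t\mu$, so $(u_t\vee v_t)(y)\le C_{\ref{heatkernelbds}}t^{-d/\alpha}\mu(1)$; using $|a^{1+\beta}-b^{1+\beta}|\le(1+\beta)(a\vee b)^\beta|a-b|$ for $a,b\ge0$ and the fact that $S_{t-s}$ preserves the $L^1$-norm of a non-negative function, one gets $\|u_t-v_t\|_{L^1}\le C\int_0^t s^{-d\beta/\alpha}\|u_s-v_s\|_{L^1}\,ds$. Since $s^{-d\beta/\alpha}\in L^1_{\mathrm{loc}}$ and $s\mapsto\|u_s-v_s\|_{L^1}\le2\mu(1)$ is bounded, iterating this inequality (Gronwall with an integrable singular kernel) forces $u_t=v_t$ a.e.\ for each $t$, hence $u=v$ on $Q$ by continuity.

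Part~(b) is then immediate: the solution of \eqref{e_integ_measure} produced in~(a) \emph{is} the weak solution $u^\mu$, and by the uniqueness just established it is the only solution of \eqref{e_integ_measure}. (Alternatively, one checks directly that any solution $u$ of \eqref{e_integ_measure} is a weak solution of \eqref{e_pde} — test the semigroup representation against $\xi\in C^{1,2}_c(Q)$ via Duhamel's formula — and that $u_t\to\mu$ in $\cM_F(\R^d)$, the latter because $\int u_t\phi\to\int\phi\,d\mu$ with error bounded by $\int_0^t\!\int u^{1+\beta}\to0$; uniqueness of the weak solution in Proposition~\ref{prop_weaksolexist} then finishes it.) I expect the one genuinely delicate step to be the passage to the limit in the nonlinear term: getting bounds uniform in $n$ near the temporal singularity $s=0$ (precisely where $\beta<\alpha/d$ is used, through integrability of $s^{-d\beta/\alpha}$) together with a domination that also tames the spatial tails.
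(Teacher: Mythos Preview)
Your proposal is correct, but it takes a different route from the paper. The paper does not prove part~(a) at all: it cites Lemma~A.2 of \cite{F1988} for existence and uniqueness of the integral equation and omits the continuity calculation. For part~(b), the paper's argument runs in the opposite direction from yours: it starts with the weak solution $u^\mu$ of Proposition~\ref{prop_weaksolexist}, plugs the test function $\xi(t,x)=p_{T+n^{-1}-t}(x-x_0)$ into the identity~\eqref{weaksol_ibp}, uses $\partial_t p = \Delta_\alpha p$ to kill the linear terms, and then lets $n\to\infty$ (splitting the time integral at $T-\epsilon$ and using $u\in L^{1+\beta}(Q_T)$ for the bulk, boundedness of $u$ near $T$ for the remainder) to recover \eqref{e_integ_measure} directly; uniqueness from part~(a) then finishes. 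Your approach instead builds the integral-equation solution as the limit of the $u^{\mu_n}$ via the stability Lemma~\ref{lemma_stability}(a), identifying it with $u^\mu$ along the way. This has the pleasant by-product of proving~(a) without invoking \cite{F1988}, at the cost of leaning more heavily on the PDE machinery from \cite{CVW2016} (Proposition~\ref{prop_weaksolexist} and Lemma~\ref{lemma_stability}) and of needing the auxiliary fact that integral and weak solutions agree for bounded data --- which you assert but do not verify (it is the content of Remark~\ref{remark_integbounded}, and is easy, but note that in the paper's ordering this comes \emph{after} Lemma~\ref{lemma_integ_pde}). Both approaches use $\beta<\alpha/d$ at exactly the same place: integrability of $s^{-d\beta/\alpha}$ near $s=0$ when controlling the nonlinear term.
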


We will therefore use the notation $u^{\mu}_t(x)$ and $u^\mu(t,x)$ to refer to the unique solution to \eqref{e_pde_measureproblem} and \eqref{e_integ_measure}. With the exception of continuity, part (a) of the above is proved in Lemma A.2 of \cite{F1988}, and continuity can be shown by a direct calculation which we omit.

\begin{proof}[Proof of Lemma~\ref{lemma_integ_pde}(b)]
Let $\mu \in \cM_F(\R^d)$ and let $u(t,x) = u^\mu(t,x)$, the unique weak solution to \eqref{e_pde_measureproblem}. For $T>0$, $x_0 \in \R^d$ and $n \in \N$, we take $\xi(t,x) = p_{T+n^{-1}-t}(x-x_0) \in C^{1,2}_b(Q)$ in \eqref{weaksol_ibp}. Since $p_t$ solves $\partial_t p_t(x) = \Delta_\alpha p_t(x)$, we have
\begin{align} \label{e_weaksol1}
&  \int_{Q_T}u(t,x)^{1+\beta} p_{T+n^{-1} - t}(x-x_0) \,dx dt  \nonumber
\\ &\hspace{5 mm}= \int_{\R^d} p_{T+n^{-1}}(x-x_0) \mu(dx)  - \int_{\R^d} u(T,x) p_{n^{-1}}(x-x_0) dx.
\end{align}
For all $n \geq 1$, $\| p_{T + n^{-1}}\|_\infty \leq C T^{-\frac d \alpha}$ by \eqref{heatkernelbds2}, and so by bounded convergence the first term in the second line converges to $S_T \mu (x_0)$. The second term converges to $u(T,x_0)$ by continuity of $u$. Now consider the first line. Let $\epsilon > 0$. For $n \geq 1$ and $t \leq T-\epsilon$, $\|p_{T +n^{-1} - t}\|_\infty \leq C\epsilon^{-\frac d \alpha}$. Since $u \in L^{1+\beta}(Q_T)$, we can apply Dominated Convergence to obtain that
\[ \lim_{n \to \infty} \int_0^{T-\epsilon} \int_{\R^d}u(t,x)^{1+\beta} p_{T+n^{-1} - t}(x-x_0)\, dx dt = \int_0^{T-\epsilon} \int_{\R^d} u(t,x)^{1+\beta} p_{T - t}(x-x_0)\, dx dt.\]
On the other hand, by \eqref{e_heatcomparison} there is a constant $K>0$ such that $u(t,x)^{1+\beta} \leq K$ for all $x \in \R^d$ and $t \in [T-\epsilon,T]$. Consequently, we have
\[ \lim_{\epsilon \downarrow 0} \sup_{n \geq 1} \left| \int_{T-\epsilon}^T \int_{\R^d} u(t,x)^{1+\beta} p_{T+n^{-1} - t}(x-x_0) \,dxdt \right| = 0.\]
Combining everything above, we take $n\to \infty$ in \eqref{e_weaksol1} to conclude that for all $(T,x_0) \in Q$,
\[u(T,x_0) = S_T \mu (x_0) - \int_{Q_T} u(t,x)^{1+\beta} p_{T-t}(x-x_0) \,dxdt,\]
and hence $u$ is equal to the solution of \eqref{e_integ_measure}.\end{proof} 

\begin{remark} \label{remark_integbounded} Along the same lines as the above, it can be shown that if $u(t,x)$ is a bounded weak solution to \eqref{e_pde} and $u(t,x) \to \phi(x)$ a.e. as $t\downarrow 0$ for $\phi \in \cB^+_b$, then $u(t,x) = u^\phi(t,x)$, the solution to \eqref{e_evol} with $u_0 = \phi$. This also implies that such a solution is unique and has the probabilistic representation $u(t,x) = \N_x(1-\exp(-X_t(\phi)))$ by \eqref{e_Lapcanon}.\end{remark}


We now extend the dual relationship with the $(\alpha,\beta)$-superprocess to measures. 

\begin{lemma} \label{lemma_measure_duality} Let $\mu \in \cM_F(\R^d)$. Then for $x \in \R^d$ and $t>0$, 
\begin{equation}   \label{e_Lapcanonmeasure2}
\N_x(1-\exp(-\mu(X_t))) = u^\mu_t(x),
\end{equation} 
and for $X_0 \in \cM_F(\R^d)$,
\begin{equation} \label{e_Lapmeasure2}
E^X_{X_0}(\exp (- \mu(X_t))) = \exp(-X_0(u_t^\mu)).
\end{equation}
\end{lemma}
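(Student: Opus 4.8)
The plan is to approximate the initial measure $\mu$ by bounded integrable functions, for which the duality is already known (namely \eqref{e_Lap} and \eqref{e_Lapcanon}), and to pass to the limit. Let $\psi_\epsilon := |B(0,\epsilon)|^{-1}\,1_{B(0,\epsilon)}$ and $\mu_\epsilon := \mu * \psi_\epsilon$, so that $\mu_\epsilon(x) = \mu(B(x,\epsilon))/|B(x,\epsilon)| \in \cB^+_b$, $\int \mu_\epsilon = \mu(1)$, and $X_t(\mu_\epsilon) = \mu(X^\epsilon_t)$. The first step is to check that $u^{\mu_\epsilon}_t$, the solution of \eqref{e_evol} with data $\mu_\epsilon$, coincides with the weak solution of \eqref{e_pde_measureproblem} for the finite measure $\mu_\epsilon(x)\,dx$: it solves the integral equation \eqref{e_integ_measure} with that measure, it is nonnegative and continuous, and since $u^{\mu_\epsilon}_t \le S_t\mu_\epsilon \le C\,\mu(1)\,t^{-d/\alpha}$ with $\|S_t\mu_\epsilon\|_1 = \mu(1)$ and $\beta < \alpha/d$, it lies in $L^1(Q_T) \cap L^{1+\beta}(Q_T)$ for all $T$; hence it is the unique such solution by Lemma~\ref{lemma_integ_pde}. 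Since $\mu_\epsilon(x)\,dx \to \mu$ weakly in $\cM_F(\R^d)$ as $\epsilon \downarrow 0$ (a routine mollifier computation; total mass is preserved), the stability Lemma~\ref{lemma_stability}(a) gives $u^{\mu_\epsilon}_t(x) \to u^\mu_t(x)$ locally uniformly on $Q$, while $u^{\mu_\epsilon}_t(x) \le C\,\mu(1)\,t^{-d/\alpha} =: C_t$ uniformly in $\epsilon \in (0,1]$ and $x$.

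For \eqref{e_Lapmeasure2} I pass to the limit in \eqref{e_Lap} with $\phi = \mu_\epsilon$. On the left, $X_t(\mu_\epsilon) = \mu(X^\epsilon_t) \to \mu(X_t)$ in $L^1(P^X_{X_0})$ by Lemma~\ref{lemma_density_convergence}(b), hence in probability, so $E^X_{X_0}(e^{-X_t(\mu_\epsilon)}) \to E^X_{X_0}(e^{-\mu(X_t)})$ by bounded convergence ($z \mapsto e^{-z}$ being bounded and continuous on $[0,\infty)$). On the right, $X_0(u^{\mu_\epsilon}_t) \to X_0(u^\mu_t)$ by dominated convergence, using the pointwise convergence from Lemma~\ref{lemma_stability}(a), the uniform bound $C_t$, and finiteness of $X_0$; hence $\exp(-X_0(u^{\mu_\epsilon}_t)) \to \exp(-X_0(u^\mu_t))$. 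This proves \eqref{e_Lapmeasure2}.

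For \eqref{e_Lapcanonmeasure2}, the same substitution in \eqref{e_Lapcanon} gives $\N_x(1-e^{-X_t(\mu_\epsilon)}) = u^{\mu_\epsilon}_t(x)$, which converges to $u^\mu_t(x)$ along any sequence $\epsilon_n \downarrow 0$. One inequality is then immediate and self-contained: a $\liminf$ along a subsequence dominates the $\liminf$ along the full net, so $\liminf_n X^{\epsilon_n}_t(y) \ge \liminf_{\epsilon\downarrow 0} X^\epsilon_t(y) = X_t(y)$ for every $y$ and every $\omega$ (the last equality being the definition \eqref{e_density_liminf}); by Fatou's lemma with respect to $\mu$, $\liminf_n X_t(\mu_{\epsilon_n}) = \liminf_n \mu(X^{\epsilon_n}_t) \ge \mu(X_t)$ surely, hence $\liminf_n\bigl(1-e^{-X_t(\mu_{\epsilon_n})}\bigr) \ge 1-e^{-\mu(X_t)}$ surely, and Fatou's lemma for the $\sigma$-finite measure $\N_x$ yields $u^\mu_t(x) = \liminf_n \N_x\bigl(1-e^{-X_t(\mu_{\epsilon_n})}\bigr) \ge \N_x\bigl(1-e^{-\mu(X_t)}\bigr)$.

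The reverse inequality $u^\mu_t(x) \le \N_x(1-e^{-\mu(X_t)})$ is the main obstacle: it does not follow from the $L^1(P^X_{X_0})$ convergence used above, both because $\N_x$ is only $\sigma$-finite and, more essentially, because $\mu$ may be Lebesgue-singular, so the Lebesgue differentiation theorem controls $X^\epsilon_t(y)$ only for Lebesgue-a.e. $y$, not $\mu$-a.e. $y$. I would close the gap in one of two ways. The shorter route uses the cluster (canonical-measure) representation of $X$ under $P^X_{X_0}$ from Section~\ref{s_cluster}, which gives $E^X_{X_0}(e^{-\mu(X_t)}) = \exp\bigl(-\int \N_x(1-e^{-\mu(X_t)})\,X_0(dx)\bigr)$; comparing with the already-proved \eqref{e_Lapmeasure2} forces $X_0(u^\mu_t) = \int \N_x(1-e^{-\mu(X_t)})\,X_0(dx)$ for all $X_0 \in \cM_F(\R^d)$, and taking $X_0 = \delta_x$ yields \eqref{e_Lapcanonmeasure2} directly (so both inequalities at once). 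The alternative stays on the canonical-measure side: on the event $\{X_t \ne 0\}$, which has finite $\N_x$-mass $U_t$, the translation-invariance and mutual-absolute-continuity argument of Lemma~\ref{lemma_density_convergence} applies to give $X^\epsilon_t(y) \to X_t(y)$ for $\mu$-a.e. $y$, $\N_x$-a.e., and then, together with the first-moment identity $\N_x(X_t(\mu_\epsilon)) = S_t\mu_\epsilon(x) \to S_t\mu(x) = \N_x(\mu(X_t))$ (obtained by monotone convergence from $\lambda^{-1}(1-e^{-\lambda X_t(\mu_\epsilon)}) \uparrow X_t(\mu_\epsilon)$ and $\lambda^{-1}u^{\lambda\mu_\epsilon}_t \to S_t\mu_\epsilon$), the generalized dominated convergence theorem delivers the matching upper bound. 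Either way the proof is complete; I would present the first route as the main argument.
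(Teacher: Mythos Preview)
Your argument is essentially the paper's: mollify $\mu$ (the paper uses $\mu_n=S_{1/n}\mu$ rather than ball averages, but both are covered by Lemma~\ref{lemma_density_convergence}), apply Lemma~\ref{lemma_stability}(a) together with the uniform bound $u^{\mu_\epsilon}_t\le C\mu(1)t^{-d/\alpha}$ and dominated convergence on the PDE side, and Lemma~\ref{lemma_density_convergence}(b)/(c) on the probability side. For \eqref{e_Lapcanonmeasure2} the paper simply remarks that the same proof works once one restricts $\N_x$ to the finite-mass set $\{X_t\neq 0\}$, which is exactly your second alternative route.

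One caution on your preferred cluster route: the Poisson Laplace formula delivers $E^X_{X_0}\bigl(\exp(-\sum_i \mu(X_t^i))\bigr)$, and identifying $\sum_i \mu(X_t^i)$ with $\mu(X_t)=\mu\bigl(\sum_i X_t^i\bigr)$ is not automatic, since the $\liminf$ in \eqref{e_density_liminf} need not commute with a sum. You need each $X_t^{i,\epsilon}(y)$ to actually converge, which follows from Lemma~\ref{lemma_density_convergence}(a) under $P^X_{\delta_x}$ transferred to a single cluster via \eqref{clustercanon}. So the cluster route ultimately leans on the same per-cluster convergence as your second route and is not genuinely shorter.
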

\begin{proof}
We give the proof under $P_{X_0}^X$ and note that it follows by essentially the same argument for the canonical measure. (One can restrict to the event $\{X_t \neq 0 \}$ because $1 - \exp(-\mu(X_t)) = 0$ on $\{X_t = 0\}$, which allows us to treat $\N_x$ as a finite measure.) Fix $\mu \in \cM_F(\R^d)$ and define $\mu_n = S_{n^{-1}} \mu$. Then $\mu_n$ is smooth, bounded and positive, so by \eqref{e_Lap} we have
\begin{equation} \label{e_lapmeasureaux1}
E^X_{X_0}(\exp(-X_t(\mu_n))) = \exp(-X_0(u^{\mu_n}_t)).
\end{equation}
Since $\mu_n \to \mu$ in $\cM_F(\R^d)$ as $n \to \infty$, by Lemma~\ref{lemma_stability}(a) it follows that $u^{\mu_n} \to u^\mu$ locally uniformly. In particular, $u^{\mu_n}_t \to u^\mu_t$ pointwise. By \eqref{e_heatcomparison} and \eqref{heatkernelbds2}, we have
\[ |u^{\mu_n}_t(x)| \leq C \mu(1) t^{-\frac d \alpha} \]
for all $n \geq 1 $ and $x \in \R^d$. Hence $X_0( u^{\mu_n}_t) \to X_0( u^\mu_t)$ by Dominated Convergence, and consequently
\begin{equation} \label{e_lapmeasureaux2}
\lim_{n \to \infty} \exp(-X_0(u^{\mu_n}_t)) = \exp( - X_0( u^\mu_t)).
\end{equation}
Consider now the left hand side of \eqref{e_lapmeasureaux1}. Expanding $(X_t,\mu_n)$, we have
\[ X_t(\mu_n) = X_t(S_{n^{-1}} \mu) = \int S_{n^{-1}} \mu(x) X_t(dx) = \int S_{n^{-1}} X_t(x) \mu(dx).\]
We have used the symmetry of $S_{n^{-1}}$. Note that $S_{n^{-1}} X_t(x) = X_t * p_{n^{-1}}(x)$ is an approximation of the density which satisfies the conditions of Lemma~\ref{lemma_density_convergence}(c), and hence $X_t(\mu_n) \to \mu(X_t)$ in $L^1(P^X_{X_0})$. In particular, this implies the convergence of the left hand side of \eqref{e_lapmeasureaux1} to $E^X_{X_0}(\exp(-\mu(X_t))$. Combined with \eqref{e_lapmeasureaux2}, this implies \eqref{e_Lapmeasure2} and completes the proof.
\end{proof}

\begin{remark} \label{remark_comp_prin}Note that \eqref{e_Lapcanonmeasure2} immediately implies that $\mu \to u^\mu_t$ is increasing, which we have already stated as Lemma~\ref{lemma_stability}(b). We will refer to this monotonicity in initial conditions of solutions to \eqref{e_pde} as the \textit{comparison principle}. Given Remark~\ref{remark_integbounded}, the comparison principle also holds for bounded weak solutions with initial data in $\cB^+_b$. \end{remark}


Solutions to \eqref{e_pde} satisfy a useful scaling property. For $\lambda > 0$ and $\mu \in \cM_F(\R^d)$, one can verify directly using elementary methods and uniqueness of solutions to problem \eqref{e_pde_measureproblem} the following formula:
\begin{equation} \label{e_scale}
u^{\lambda \mu}(t,x) = \lambda^{\alpha \gamma} u^{\mu(\cdot / \lambda^{\beta \gamma})}(\lambda^{\alpha \beta \gamma}t, \lambda^{\beta \gamma}x),
\end{equation}
where $\gamma = \frac{1}{\alpha - \beta}$, and the measure $\mu(\cdot / \lambda^{\beta \gamma})$ is the dilation of $\mu$ defined by
\[ \mu(\cdot / \lambda^{\beta \gamma})(A) = \mu(A / \lambda^{\beta \gamma}) = \int 1(\lambda^{\beta \gamma}x \in A) \mu(dx)\]
for measurable $A\subseteq \R^d$. This leads to a very useful expression when we scale out the time variable $t$ to obtain an expression involving a solution at time $1$; in particular, we have
\begin{equation} \label{e_scalet}
u^{ \mu}_t(x) = t^{-\frac 1 \beta} u^{t^{\frac{\alpha - \beta}{\alpha \beta}}\mu(\cdot /t^{-\frac 1 \alpha})}(1,t^{-\frac 1 \alpha}x).
\end{equation}
It follows that $u^{\infty \mu}_t(x) = \lim_{\lambda \to \infty} u^{\lambda \mu}_t(x)$ satisfies
\begin{equation} \label{e_scaletinf}
u^{ \infty \mu}_t(x) = t^{-\frac 1 \beta} u^{\infty \mu(\cdot /t^{-\frac 1 \alpha})}(1,t^{-\frac 1 \alpha}x).
\end{equation}

\subsection{A Feynman-Kac formula} \label{s_FK}
We now state a Feynman-Kac formula for some functions related to solutions of \eqref{e_evol}. First, for $\phi,\psi \in \cB^+_b$, we formally define
\begin{equation} \label{e_eps_deriv}
z^{\phi,\psi}(t,x) =  \frac{\partial}{\partial \epsilon} \restr{u^{\phi + \epsilon \psi}(t,x)}{\epsilon = 0} = \lim_{\epsilon \downarrow 0} \frac{u^{\phi + \epsilon \psi}(t,x) - u^\phi(t,x) }{\epsilon},
\end{equation}
where $u^\phi(t,x)$ is taken to be the solution to \eqref{e_evol} with $u_0 = \phi$. By \eqref{e_Lapcanon} and the Dominated Convergence Theorem, it follows that  the derivative with respect to $\epsilon$ exists and
\[ z^{\phi,\psi}(t,x) =  \N_x( X_t(\psi) \exp(-X_t(\phi ))). \]
Recall that $E^W_x$ denotes the expectation associated to an $\alpha$-stable process $W_t$ with $W_0 = x$.
\begin{lemma}\label{lemma_FK} (a) For $\phi,\psi \in \cB^+_b$, 
\begin{equation} \label{e_FKgen}
z^{\phi,\psi}(t,x) = E^W_x\left( \psi(W_t) \exp \left( - (1+\beta)\int_0^t u^{\phi}_s(W_s)^\beta ds \right) \right).
\end{equation}
\noindent(b) For $\phi \in \cB^+_b$ and $\lambda > 0$, $\frac{\partial}{\partial \lambda} u^{\lambda \phi}_t(x)$ exists for all $(t,x) \in Q$ and
\begin{equation} \label{e_FKlambda}
\frac{\partial}{\partial \lambda}u^{\lambda \phi}_t(x) = E^W_x\left( \phi(W_t) \exp \left( - (1+\beta) \int_0^t u^{\lambda \phi}_t(W_s)^\beta ds \right) \right)
\end{equation}
In particular, for $(t,x) \in Q$ and $\Lambda >0 $ we have
\begin{equation} \label{e_FK_FTC}
u^{\Lambda \phi}_t(x) = \int_0^\Lambda \frac{\partial}{\partial \lambda}u^{\lambda \phi}_t(x)\, d\lambda.
\end{equation} 
\end{lemma}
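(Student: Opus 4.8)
The plan is to reduce part (a) to a linear Feynman--Kac formula obtained by differentiating the mild equation \eqref{e_evol} in the direction $\psi$, and then to deduce part (b) from part (a) via the fundamental theorem of calculus. For (a), fix $\phi,\psi\in\cB^+_b$. Subtracting the equations \eqref{e_evol} for $u^{\phi+\epsilon\psi}$ and $u^\phi$ and dividing by $\epsilon>0$ gives
\[
\frac{u^{\phi+\epsilon\psi}_t(x)-u^\phi_t(x)}{\epsilon}
= S_t\psi(x) - \int_0^t S_{t-s}\!\left(\frac{(u^{\phi+\epsilon\psi}_s)^{1+\beta}-(u^\phi_s)^{1+\beta}}{\epsilon}\right)\!(x)\,ds .
\]
By the comparison principle (Remark~\ref{remark_comp_prin}) one has $0\le u^{\phi+\epsilon\psi}_s-u^\phi_s\le\epsilon\, S_s\psi\le\epsilon\|\psi\|_\infty$ (the nonlinear term in the difference of the mild equations is nonnegative), while \eqref{e_heatcomparison} keeps $u^{\phi+\epsilon\psi}_s$ and $u^\phi_s$ in $[0,\|\phi\|_\infty+\|\psi\|_\infty]$; applying the mean value theorem to $r\mapsto r^{1+\beta}$ then bounds the integrand above by the constant $(1+\beta)(\|\phi\|_\infty+\|\psi\|_\infty)^\beta\|\psi\|_\infty$, uniformly in $\epsilon\in(0,1]$, $s$ and $x$. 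Since the left-hand side converges pointwise to $z^{\phi,\psi}_t(x)$ (as recorded after \eqref{e_eps_deriv}), dominated convergence shows that $z^{\phi,\psi}$, which moreover satisfies $0\le z^{\phi,\psi}_t\le S_t\psi\le\|\psi\|_\infty$ from its probabilistic expression, is a bounded solution of the linear integral equation
\[
z_t(x)=S_t\psi(x)-(1+\beta)\int_0^t S_{t-s}\!\left((u^\phi_s)^\beta z_s\right)\!(x)\,ds .
\]

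Next, the potential $(1+\beta)u^\phi_s(y)^\beta$ is nonnegative and, by \eqref{e_heatcomparison}, bounded, so this linear equation has a unique bounded solution: the difference $w$ of two bounded solutions satisfies $\|w_t\|_\infty\le\|(1+\beta)(u^\phi)^\beta\|_\infty\int_0^t\|w_s\|_\infty\,ds$, whence $w\equiv0$ by Gronwall. It therefore suffices to check that the right-hand side of \eqref{e_FKgen} solves this equation. This is the classical Feynman--Kac identity for a bounded time-dependent potential, which can be verified either by expanding the exponential as a Dyson series, taking $E^W_x$ of its product with $\psi(W_t)$, rewriting the $n$-th term via the Markov property of $W$ as an iterated composition of the semigroup $(S_\cdot)$ with multiplications by the potential, and matching this against the Neumann iterates of the linear equation (using the symmetry $p_t(x-y)=p_t(y-x)$ of the stable kernel together with a reversal of the time variable), or directly from the multiplicative-functional identity $e^{-\int_0^t V(\cdot,W_\cdot)}=1-\int_0^t V(s,W_s)e^{-\int_0^s V(\cdot,W_\cdot)}\,ds$ and the Markov property. (Alternatively \eqref{e_FKgen} can be obtained directly from a spine/Palm decomposition of $\N_x$: size-biasing by $X_t(\psi)$ produces an $\alpha$-stable spine from $x$ along which sub-clusters immigrate at a rate governed by the derivative $(1+\beta)\lambda^\beta$ of the branching mechanism $\lambda\mapsto\lambda^{1+\beta}$, and the weight $e^{-X_t(\phi)}$ exponentiates the immigration into the stated path functional.) This proves \eqref{e_FKgen}.

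For part (b), the chain rule gives $\tfrac{\partial}{\partial\lambda}u^{\lambda\phi}_t(x)=\tfrac{\partial}{\partial\epsilon}u^{\lambda\phi+\epsilon\phi}_t(x)$ evaluated at $\epsilon=0$, which is $z^{\lambda\phi,\phi}_t(x)$; hence the $\lambda$-derivative exists for all $(t,x)\in Q$ and \eqref{e_FKlambda} is \eqref{e_FKgen} with $\phi$ replaced by $\lambda\phi$ and $\psi=\phi$. From \eqref{e_FKlambda} the integrand is dominated by $\|\phi\|_\infty$ and is continuous in $\lambda$ (dominated convergence, using continuity of $\lambda\mapsto u^{\lambda\phi}_s$ and the uniform potential bound), so $\lambda\mapsto u^{\lambda\phi}_t(x)$ is $C^1$ on $(0,\infty)$; since $u^{0}\equiv0$ by uniqueness of the zero-data solution, \eqref{e_FK_FTC} follows from the fundamental theorem of calculus.

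The step requiring genuine care is the second one: making the Feynman--Kac representation rigorous for the time-dependent (though bounded) potential and, in particular, keeping the orientation of the time variable inside the exponential straight when matching the Dyson and Neumann expansions. Once the linearised integral equation is in hand, the differentiation in the first step is routine thanks to the one-sided bound $u^{\phi+\epsilon\psi}_s-u^\phi_s\le\epsilon S_s\psi$ coming from the comparison principle, and the passage to part (b) is immediate.
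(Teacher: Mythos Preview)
Your approach is exactly the one the paper points to: it cites Theorem~6.3.1 of \cite{DawsonInfDiv1991}, where \eqref{e_FKgen} is obtained by showing that $z^{\phi,\psi}$ solves the linearised evolution equation, and then observes that (b) is the case $z^{\lambda\phi,\phi}$. Your derivation of the linear equation, the Gronwall uniqueness, and the passage to (b) are all correct and match this outline.

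There is, however, one point you should make explicit rather than leave implicit in the phrase ``reversal of the time variable''. The linear equation you derive,
\[
z_t = S_t\psi - (1+\beta)\int_0^t S_{t-s}\big((u^\phi_s)^\beta z_s\big)\,ds,
\]
has Feynman--Kac solution
\[
E^W_x\Big[\psi(W_t)\exp\Big(-(1+\beta)\int_0^t u^\phi_{t-s}(W_s)^\beta\,ds\Big)\Big],
\]
with $u^\phi_{t-s}$, not $u^\phi_s$, inside the exponential. (Expand both sides to first order: the Neumann iterate gives $-\int_0^t S_{t-s}\big((u^\phi_s)^\beta S_s\psi\big)\,ds$, while the Dyson term with potential $u^\phi_s(W_s)$ gives $-\int_0^t S_s\big((u^\phi_s)^\beta S_{t-s}\psi\big)\,ds$; these do not agree for a time-inhomogeneous potential.) The lemma as printed therefore contains a typo in the time argument, and indeed the paper itself uses the corrected form $u^\phi_{t-s}(W_s)$ (equivalently $u^\phi(t-\tau,W_\tau)$) in both applications of the lemma, in the proofs of Lemmas~\ref{lemma_heatkernellowerbd} and~\ref{lemma_heatlwrbd_Frost}. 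Your sentence ``it therefore suffices to check that the right-hand side of \eqref{e_FKgen} solves this equation'' is false for the literal right-hand side as stated; what you actually prove (and what the paper uses) is the version with $u^\phi_{t-s}$. State this correction openly rather than burying it in the parenthetical about time orientation.
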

A proof of part (a) is implicit in the proof of Theorem 6.3.1 in \cite{DawsonInfDiv1991}, where the representation \eqref{e_FKgen} is established by showing that $z^{\phi,\psi}(t,x)$ solves a certain linear evolution equation. To see that (b) follows from (a), it suffices to consider $z^{\lambda \phi, \phi}(t,x)$ for $\phi \in \cB^+_b$ and $\lambda >0$.


\subsection{Cluster decompositions} \label{s_cluster}
The connection between the $(\alpha,\beta)$-superprocess and its canonical measure is via a cluster representation in which the superprocess is given by a Poisson superposition of clusters whose intensity is canonical measure. To make this precise, for $X_0 \in \cM_F(\R^d)$ we define
\[ \N_{X_0} (\cdot) = \int \N_x(\cdot) X_0(dx).\]
Let $\Xi(\cdot)$ be a Poisson point process on $\D([0,\infty), \cM_F(\R^d))$ with intensity $\N_{X_0}$. Then the process
\[ X_t(\cdot) = \begin{cases} \int \nu_t(\cdot) \Xi(d\nu) &\text{ if } t>0, 
\\ X_0(\cdot) &\text{ if } t=0 \end{cases} \]
is an $(\alpha,\beta)$-superproces with law $P^X_{X_0}$. This is a consequence of Theorem 4.2.1 of \cite{DLG2005}. For fixed $t>0$ the above implies that
\begin{equation} \label{clusterrep}
X_t \stackrel{\cD}{=} \sum_{i =1}^{N} X^i_{t},
\end{equation}
where $\stackrel{\cD}{=}$ indicates equality of distribution. In the above, $N$ is a Poisson random variable with mean $X_0(1)(\beta t)^{-\frac 1 \beta}$ and the $X_t^i$ are iid random measures with distribution $\N_{X_0}(X_t \in \cdot \, | \, X_t \neq 0)$. This representation gives us a convenient way to compare path properties under $P^X_{X_0}$ and $\N_x$. In particular, suppose we realize $X_t$ under $P^X_{\delta_x}$ via \eqref{clusterrep}. Since the probability that $N=1$ is positive, we can condition on this event, and it follows that
\begin{equation} \label{clustercanon}
\N_x(X_t \in \cdot \, | \, X_t \neq 0) = P^X_{\delta_x}(X_t \in \cdot\, | \, N = 1). 
\end{equation}
Consequently, for measurable $A \subset \cM_F(\R^d)$ we have
\begin{equation} \label{clustertransfer}
\text{If } P^X_{\delta_x}(X_t \in A \, | \, X_t \neq 0 ) = 1, \text{ then } \N_x( X_t \in A \, | \, X_t \neq0 ) = 1.	
\end{equation}
 
\section{The density at a fixed point} \label{s_points}
In the introduction, we noted that Theorem~\ref{thm_pointdichotomy} is equivalent to Theorem B, which is proved analytically. In this section we give a probabilistic proof of Theorem~\ref{thm_pointdichotomy}(a), and therefore of Theorem B(a). We define $u^\lambda_t(x)$ as the solution to \eqref{e_pde_measureproblem} with initial measure $\lambda \delta_0$. Then by translation invariance of the equation \eqref{e_pde} and \eqref{e_Lapmeasure2}, for $x\in\R^d$ we have
\begin{equation}  \label{e_pointLap}
E^X_{X_0}(\exp(-\lambda X_t(x))) = \exp \left(-\int u^\lambda_t(y-x) X_0(dy) \right)
\end{equation}
We define $u^\infty_t = \lim_{\lambda \to \infty} u^\lambda_t$ and observe that, by taking $\lambda \to \infty$ in \eqref{e_pointLap},
\begin{equation} \label{e_pointLap_inf}
P_{X_0}^X(X_t(x) = 0) = \exp \left(-\int u^\infty_t(y-x) X_0(dy)\right).
\end{equation}
The main purpose of this section is to show the following.
\begin{proposition} \label{prop_pointflat} If $\beta \leq \beta^*(\alpha) = \frac{\alpha}{d+\alpha}$, then for fixed $x \in \R^d$, $X_t(x) > 0$ a.s. on $\{X_t \neq 0\}$ under $P^X_{X_0}$ and $\N_0$. Hence $u^\infty_t = U_t$. 
\end{proposition}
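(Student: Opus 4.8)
The plan is to reduce Proposition~\ref{prop_pointflat} to a single pointwise bound on the limiting solution $u^{\infty}_t := u^{\infty \delta_0}_t = \lim_{\lambda \to \infty} u^{\lambda \delta_0}_t$, and then to prove that bound from the Feynman--Kac formula of Lemma~\ref{lemma_FK}.

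\textbf{Reductions.} By \eqref{e_pointpdeinf} and \eqref{e_canon_survive} one always has $\N_0(X_t(x)>0)=u^{\infty}_t(x)\le U_t=\N_0(X_t\ne 0)$ (the inequality being \eqref{e_uinf_bd}), and by \eqref{e_pointLap_inf}, $P^X_{X_0}(X_t(x)=0)=\exp(-\int u^{\infty}_t(y-x)\,X_0(dy))$ while $P^X_{X_0}(X_t=0)=\exp(-U_t X_0(1))$. Hence the statement for both $\N_0$ and $P^X_{X_0}$, and the conclusion $u^{\infty}_t=U_t$, follow once we show the reverse inequality $u^{\infty}_t(x)\ge U_t$ for all $(t,x)\in Q$; by the scaling identity \eqref{e_scaletinf} applied to the dilation--invariant measure $\delta_0$ this reduces to $t=1$, i.e.\ to showing $\inf_{x\in\R^d}u^{\infty}_1(x)\ge U_1$.

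\textbf{A Feynman--Kac representation.} Approximating $\delta_0$ by mollifiers $\phi_n\to\delta_0$, applying Lemma~\ref{lemma_FK}(b) and \eqref{e_FK_FTC}, and letting $n\to\infty$ using the stability Lemma~\ref{lemma_stability}(a), the dominating bounds $u^{\nu\phi_n}_r\le \nu S_r\phi_n$ and $u^{\nu\phi_n}_r\le U_r$ (from \eqref{e_heatcomparison} and \eqref{e_uinf_bd}), and the joint continuity of Lemma~\ref{lemma_integ_pde}, one is led to
\[
u^{\infty}_1(x)\;=\;p_1(x)\,E^{br}_{0\to x}\!\left[\int_0^\infty \exp\!\Big(-(1+\beta)\!\int_0^1 u^{\nu\delta_0}_r(W_r)^{\beta}\,dr\Big)\,d\nu\right],
\]
where $E^{br}_{0\to x}$ is expectation for the symmetric $\alpha$-stable bridge $(W_r)_{r\in[0,1]}$ with $W_0=0$, $W_1=x$ (this appears, after a time reversal, as the $n\to\infty$ limit of $\phi_n(W_1)\,dP^W_0$); the delicate points are interchanging $\lim_n$ with the $\nu$-integral and with the path expectation. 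The estimate then proceeds by bounding the inner potential through $u^{\nu\delta_0}_r(W_r)\le \nu p_r(W_r)\wedge U_r$, performing the (Gamma-type) $\nu$-integration, and organising the bridge path according to the time and size of its single large displacement away from the origin. The crucial feature is that $\int_0^1 u^{\nu\delta_0}_r(W_r)^\beta\,dr$ is governed near $r=0$ by the strength of the singularity of $u^{\nu\delta_0}$ at the origin, and the bookkeeping which turns this into $\inf_x u^{\infty}_1(x)\ge U_1$ closes \emph{exactly} when the scaling exponent $\tfrac1\beta-\tfrac d\alpha$ is at least $1$, equivalently $d_{\mathrm{sat}}\le 0$, equivalently $\beta\le\beta^*(\alpha)$. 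I expect this estimate --- and isolating precisely why the threshold is $\beta^*(\alpha)$ rather than $\tfrac\alpha d$ --- to be the main obstacle, since it is the only step with no counterpart for general $\beta<\tfrac\alpha d$.

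\textbf{Transfer.} Once $u^{\infty}_t\equiv U_t$ is known, \eqref{e_pointpdeinf}--\eqref{e_canon_survive} give $\N_0(X_t(x)>0)=\N_0(X_t\ne 0)$, so $X_t(x)>0$ a.s.\ on $\{X_t\ne 0\}$ under $\N_0$, and then under $P^X_{X_0}$ by \eqref{e_pointLap_inf}. (An alternative route to the equality $u^{\infty}_t\equiv U_t$, once any uniform bound $u^{\infty}_\tau(y)\ge c\,U_\tau$ with $c>0$ is available, is a self-improvement: for $s<t$ the Markov property under $\N_0$ and \eqref{e_pointLap_inf} give $\N_0(X_t(x)=0,\ X_t\ne 0)\le \N_0\big[1_{\{X_s\ne 0\}}\exp(-c\,U_{t-s}X_s(1))\big]$, whose right-hand side, evaluated from \eqref{e_Lapcanon} with a constant test function and \eqref{e_canon_survive}, equals $\beta^{-1/\beta}\big[s^{-1/\beta}-(c^{-\beta}(t-s)+s)^{-1/\beta}\big]$ and hence tends to $0$ as $s\uparrow t$.)
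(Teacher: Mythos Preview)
Your proposal has a genuine gap at the central step. You correctly reduce the problem to showing $u^\infty_1(x)\ge U_1$ for all $x$, but the Feynman--Kac/bridge argument you sketch is not carried out: the ``bookkeeping'' that is supposed to turn the representation into the inequality is precisely the heart of the matter, and you give no indication of how it goes or why it closes at $\beta=\beta^*(\alpha)$. The passage to the bridge representation itself involves nontrivial limit interchanges (in particular $\lim_n$ with the unbounded $\nu$-integral), and the subsequent estimate---showing that $p_1(x)\int_0^\infty E^{br}[\cdot]\,d\nu\ge U_1$ \emph{uniformly} in $x$, including as $|x|\to\infty$ where $p_1(x)\to 0$---is not obviously true and is not supplied.

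Your alternative ``self-improvement'' route is closer to a workable argument, but as written it requires a \emph{uniform} lower bound $u^\infty_\tau(y)\ge c\,U_\tau$ for all $y$, and this is essentially what you are trying to prove. The only bound readily available (from the Feynman--Kac formula, as in the paper's Lemma~\ref{lemma_heatkernellowerbd}) is the \emph{local} one $u^\infty_\tau(y)\ge c\,\tau^{-1/\beta}p_1(\tau^{-1/\alpha}y)$, which decays as $|y|\to\infty$ and does not yield a uniform constant.

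The paper's proof circumvents both difficulties by not attempting to establish $u^\infty_1\ge U_1$ analytically. Instead it works on the probability side: it invokes instantaneous propagation (Theorem~A) to guarantee that, on $\{X_t\ne 0\}$, $X_t$ has mass in a fixed ball $B(x,1)$ around the target point; it then applies the Markov property at times $t-\delta_n\uparrow t$ together with the local heat-kernel bound on $u^\infty_{\delta_n}$, used only for $y\in B(x,1)$. This yields
\[
P(X_t(x)=0\mid\cF_{t-\delta_n})\le\exp\big(-c\,\kappa\,\delta_n^{-1/\beta+(d+\alpha)/\alpha}\big)
\]
on the event $\{X_{t-\delta_n}(B(x,1))\ge\kappa\}$. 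For $\beta<\beta^*(\alpha)$ the exponent diverges and the conditional probability tends to zero; for $\beta=\beta^*(\alpha)$ it is bounded away from one and a martingale-convergence argument (showing the conditional probability converges to the indicator $1(X_t(x)>0)$) finishes the proof. The key idea you are missing is that instantaneous propagation supplies mass near $x$, so one only ever needs the decaying local bound on $u^\infty$, never a uniform one.
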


In fact, this result is a consequence of the more general Theorem~\ref{thm_flatFrost}, which we prove in Section~\ref{s_flat}. However, we state and prove it separately for a few reasons. First, while Theorem~\ref{thm_flatFrost} (and several other results) concern the behaviour of $\mu(X_t)$ for certain families of measures, the measure $\delta_x$ is of particular interest because it corresponds to the density at a fixed point. The other reason is that, while the method used to prove Proposition~\ref{prop_pointflat} and Theorem~\ref{thm_flatFrost} is largely the same, the proof of the latter involves technicalities that do not arise in the former. We therefore opt to include the simpler proof in the case which is particularly interesting.

Recall that we will sometimes write $u^\lambda(t,x)$ to denote $u^\lambda_t(x)$. We note the particular form that the scaling relationship \eqref{e_scalet} takes for the family $u^\lambda(t,x)$. Since $\delta_0(\cdot / r) = \delta_0$, it follows that
\begin{equation} \label{e_scalepoint}
u^\lambda(t,x) = t^{-\frac 1 \beta} u^{t^{\frac{\alpha - \beta}{\alpha \beta}} \lambda}(1,t^{-\frac 1 \alpha}x)
\end{equation}
Consequently, $u^\infty(t,x)$ satisfies
\begin{equation} \label{e_infscalet_point}
u^\infty(t,x) = t^{-\frac 1 \beta} u^\infty(1, t^{-\frac 1 \alpha} x).
\end{equation}

The following lemma gives a lower bound for $u^\lambda_1$ (for $\lambda \geq 1$) in terms of the heat kernel $p_1$ of the symmetric $\alpha$-stable process and holds for all $0 < \beta < \frac \alpha d$. The statement of this result for $u^\infty_1$ appeared in \cite{CVW2016}, where it was called Lemma 5.3. Here we give a probabilistic proof.

\begin{lemma} \label{lemma_heatkernellowerbd}
There is a constant $c_{\ref{lemma_heatkernellowerbd}} = c_{\ref{lemma_heatkernellowerbd}}(\alpha , \beta, d) > 0 $ such that for all $\lambda \geq 1$,
\[ u^\lambda_1(x) \geq c_{\ref{lemma_heatkernellowerbd}} p_1(x).\]
In particular, $u^\infty_1(x) = \N_0(X_1(x) > 0 ) \geq c_{\ref{lemma_heatkernellowerbd}} p_1(x)$.
\end{lemma}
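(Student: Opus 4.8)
The plan is to reduce to the case $\lambda=1$ and then bound $u^1_1(x)$ from below via the Feynman--Kac representation of Lemma~\ref{lemma_FK}. Since $\lambda\delta_0\ge\delta_0$ as measures for every $\lambda\ge 1$, the comparison principle (Remark~\ref{remark_comp_prin}, i.e. Lemma~\ref{lemma_stability}(b)) gives $u^\lambda_1\ge u^1_1$ for all such $\lambda$, and letting $\lambda\to\infty$ gives $u^\infty_1\ge u^1_1$ as well. So everything follows once we establish $u^1_1(x)\ge c_{\ref{lemma_heatkernellowerbd}}\,p_1(x)$. It is important to run the estimate at $\lambda=1$ rather than for a general $\lambda$, since the control of the Feynman--Kac exponent below degenerates as $\lambda\to\infty$.

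Because the Feynman--Kac formula is stated for bounded measurable initial data, I would approximate $\delta_0$ by $\phi_\epsilon:=|B(0,\epsilon)|^{-1}1_{B(0,\epsilon)}\in\cB^+_b$. Combining \eqref{e_FKlambda} and \eqref{e_FK_FTC} (with $\Lambda=1$ and $t=1$) gives
\[ u^{\phi_\epsilon}_1(x)=\int_0^1 E^W_x\!\left(\phi_\epsilon(W_1)\exp\!\Big(-(1+\beta)\int_0^1 u^{\lambda\phi_\epsilon}_s(W_s)^\beta\,ds\Big)\right)d\lambda. \]
For $\lambda\in[0,1]$ the heat comparison \eqref{e_heatcomparison} together with $\|p_s\|_\infty\le C_{\ref{heatkernelbds}}\,s^{-d/\alpha}$ from \eqref{heatkernelbds} yields $u^{\lambda\phi_\epsilon}_s(y)\le S_s(\lambda\phi_\epsilon)(y)\le C_{\ref{heatkernelbds}}\,s^{-d/\alpha}$ for every $y\in\R^d$, whence
\[ \int_0^1 u^{\lambda\phi_\epsilon}_s(W_s)^\beta\,ds\le C_{\ref{heatkernelbds}}^{\,\beta}\int_0^1 s^{-d\beta/\alpha}\,ds=:K<\infty, \]
the finiteness of $K$ being exactly where the standing assumption $\beta<\alpha/d$ enters. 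This bound is uniform in $\lambda\in[0,1]$, in $\epsilon$, and along every path of $W$, and $E^W_x(\phi_\epsilon(W_1))=S_1\phi_\epsilon(x)$, so $u^{\phi_\epsilon}_1(x)\ge e^{-(1+\beta)K}\,S_1\phi_\epsilon(x)$.

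Finally I would let $\epsilon\downarrow 0$: $S_1\phi_\epsilon(x)=\int p_1(x-z)\phi_\epsilon(z)\,dz\to p_1(x)$ by continuity of $p_1$, while $\phi_\epsilon(y)\,dy\to\delta_0$ weakly, so Lemma~\ref{lemma_stability}(a) gives $u^{\phi_\epsilon}_1(x)\to u^{\delta_0}_1(x)=u^1_1(x)$. Thus $u^1_1(x)\ge c_{\ref{lemma_heatkernellowerbd}}\,p_1(x)$ with $c_{\ref{lemma_heatkernellowerbd}}=e^{-(1+\beta)K}$, and the first paragraph promotes this to all $\lambda\ge 1$ and to $\lambda=\infty$; the identity $u^\infty_1(x)=\N_0(X_1(x)>0)$ is \eqref{e_pointpdeinf}. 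The one delicate point is the uniform bound on the Feynman--Kac exponent: the substitution $u^{\lambda\phi_\epsilon}_s\le S_s(\lambda\phi_\epsilon)$ is crude near $s=0$, but it suffices here precisely because the resulting singularity $s^{-d\beta/\alpha}$ is integrable and because the Feynman--Kac representation makes the nonlinear loss enter multiplicatively (through $e^{-(1+\beta)K}$) rather than additively, as it would in the mild equation itself.
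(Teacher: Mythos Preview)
Your proof is correct and follows essentially the same approach as the paper: mollify $\delta_0$, apply the Feynman--Kac formula of Lemma~\ref{lemma_FK}, bound the exponent via the heat comparison \eqref{e_heatcomparison} and the integrability of $s^{-d\beta/\alpha}$, integrate over $\lambda\in[0,1]$, and pass to the limit using Lemma~\ref{lemma_stability}(a). The only cosmetic differences are that the paper uses $p_\epsilon$ as the mollifier (rather than a normalized indicator) and retains the $\lambda^\beta$ dependence explicitly in the exponent before integrating $\int_0^1 e^{-C\lambda^\beta}\,d\lambda$, whereas you simply bound uniformly over $\lambda\in[0,1]$; neither change affects the argument.
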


\begin{proof}
For $\lambda >0$ and $\epsilon > 0$, consider $u^{\lambda p_\epsilon}(t,x)$, the unique solution to \eqref{e_evol} with $u_0 = \lambda p_\epsilon$. By Lemma~\ref{lemma_FK}(b), we have
\begin{equation}
\frac{\partial}{\partial \lambda} u^{\lambda p_\epsilon}(1,x) = E^W_x \left( p_\epsilon(W_1) \exp \left( - (1+\beta) \int_0^1 u^{\lambda p_\epsilon}(1-s, W_s)^\beta \,ds\right) \right). \nonumber
\end{equation}
By \eqref{e_heatcomparison}, $u^{\lambda p_\epsilon}_s \leq \lambda S_s p_\epsilon = \lambda p_{s + \epsilon}$, which implies that 
\begin{align}
\frac{\partial}{\partial \lambda} u^{\lambda p_\epsilon}(1,x) &\geq E^W_x \left( p_\epsilon(W_1) \exp \left( - (1+\beta) \int_0^1 \lambda^\beta p_{1-s+\epsilon}(x + W_s)^\beta \,ds\right) \right) \nonumber
\\ &\geq E^W_x \left( p_\epsilon(W_1)\right) \exp \left( - \lambda^\beta (1+\beta)  \int_0^1  p_{1-s+\epsilon}(0)^\beta \,ds\right) . \nonumber
\end{align}
In the second line we have used the fact that $p_{1-s+\epsilon}$ is radially decreasing, hence $p_{1-s+\epsilon}(W_s) \geq p_{1-s+\epsilon}(0)$, and removed the exponential from the expectation because it no longer depends on $W$. We note that $E^W_x(p_\epsilon(W_1)) = S_1 p_\epsilon(x) = p_{1+\epsilon}(x)$ from the semigroup property. Using \eqref{heatkernelbds2} and changing variables in the integral, we then have
\begin{align}
\frac{\partial}{\partial \lambda} u^{\lambda p_\epsilon}(1,x) &\geq p_{1+\epsilon}(x) \exp \left(-\lambda^\beta (1+\beta) \int_0^1 C_{\ref{heatkernelbds}}^\beta (s+\epsilon)^{-\frac{ d \beta}{\alpha}} ds \right) \nonumber
\end{align}
Because $\beta < \frac{\alpha}{d}$, the integral remains bounded as $ \epsilon \downarrow 0$. It follows that for a constant $C>0$, for all $\epsilon \in (0,1]$,
\begin{equation}
\frac{\partial}{\partial \lambda} u^{\lambda p_\epsilon}(1,x) \geq p_{1+\epsilon}(x) \exp \left( - C \lambda^\beta \right). \nonumber
\end{equation}
In particular, \eqref{e_FK_FTC} and the above imply that
\begin{equation}
u^{p_\epsilon}(1,x) \geq c_0 p_{1+\epsilon}(x) \nonumber
\end{equation}
for a constant $c_0 > 0$. By Lemma~\ref{lemma_stability}(a), the left hand side converges to $u^1(1,x)$ as $\epsilon \downarrow 0$, and the right hand side converges to $p_1(x)$. Thus we have $u^1(1,x) \geq c_0 p_1(x)$. Since $\lambda \to u^\lambda(1,x)$ is increasing, this implies the result.
\end{proof}

By \eqref{e_infscalet_point} and Lemma~\ref{lemma_heatkernellowerbd}, one obtains that 
\begin{equation} \label{e_uinf_lwrbound}
u^\infty_t(x) \geq c_{\ref{lemma_heatkernellowerbd}}t^{-\frac 1 \beta} p_1(t^{-\frac 1 \alpha}x).
\end{equation} 
For fixed $x \neq 0$, by \eqref{heatkernelbds2} we then have
\[ \liminf_{t\downarrow 0} \frac{u^\infty_t(x)}{ t^{- \frac 1 \beta + \frac{d+\alpha}{\alpha}}} > 0.\]
It is therefore immediate that $\lim_{t\downarrow 0} u^\infty_t(x) = \infty$ for all $x \in \R^d$ when $\beta < \beta^*(\alpha)$. It does not give the same conclusion when $\beta = \beta^*(\alpha)$, and in neither case is it immediate that $u^\infty_t$ is flat.

Let $(\cF_t)_{t \geq 0}$ denote the standard right-continuous filtration associated to $X = (X_t: t \geq 0)$. For a $P_{X_0}^X$-integrable function $f(X)$, to denote its conditional expectation we will omit the sub- and superscripts and simply write $E(f(X) | \cF)$. The (one-dimensional) Markov property for $X$ is then expressed as
\[E(f(X_{t+s}) \,|\,\cF_s )(\omega) = E^X_{X_s(\omega)}(f(X_t)). \]

Because $X$ is c\`adl\`ag, $P^X_{X_0}(X_t = X_{t^+} \,\,\forall t >0 ) = 1$. The following lemma gives almost surely left continuity at a fixed time.
\begin{lemma} \label{lemma_contfix} Fix $t>0$ and $X_0 \in \cM_F(\R^d)$. Almost surely under $P^X_{X_0}$ there is no discontinuity of $s \to X_s$ at time $t$, and hence $X_s\to X_t $ in $\cM_F(\R^d)$ as $s \uparrow t$. Moreover, for any open or closed ball $B$, $\lim_{s \uparrow t} X_s(B) = X_t(B)$ almost surely.
\end{lemma}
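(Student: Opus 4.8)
The plan is to identify the left limit $X_{t^-}:=\lim_{s\uparrow t}X_s$, which exists in $\cM_F(\R^d)$ because $X$ has c\`adl\`ag paths, with $X_t$ almost surely; the conclusions of the lemma then follow readily. The first step is to observe that $s\mapsto X_s(1)$ is a non-negative $(\cF_s)$-martingale: by the Markov property and the conservativeness $S_u1=1$ of the $\alpha$-stable semigroup, the mean mass formula (see \eqref{e_mm1}) gives $E(X_{s+u}(1)\mid\cF_s)=X_s(S_u 1)=X_s(1)$, and $E^X_{X_0}(X_s(1))=X_0(1)<\infty$. Restricted to $[0,t]$ this martingale is closed on the right by $X_t(1)\in L^1(P^X_{X_0})$, so the family $\{X_s(1):s\in[0,t]\}$ is uniformly integrable. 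Since $\nu\mapsto\nu(1)$ is continuous on $\cM_F(\R^d)$, we have $X_s(1)\to X_{t^-}(1)$ a.s.\ as $s\uparrow t$, and uniform integrability upgrades this to convergence in $L^1$; hence $E^X_{X_0}(X_{t^-}(1))=X_0(1)=E^X_{X_0}(X_t(1))$.

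Next I would invoke the fact that the $(\alpha,\beta)$-superprocess has only positive jumps, i.e.\ $X_s-X_{s^-}$ is a non-negative measure for every $s>0$ (a structural feature of superprocesses with $(1+\beta)$-stable branching; already the total mass is a spectrally positive continuous-state branching process). In particular $X_t-X_{t^-}\ge 0$, so $X_t(1)-X_{t^-}(1)$ is a non-negative random variable, and by the previous step it has zero mean, whence $X_t(1)=X_{t^-}(1)$ a.s. A non-negative measure with zero total mass is the zero measure, so $X_t=X_{t^-}$ a.s.; this is precisely the statement that there is a.s.\ no discontinuity of $s\mapsto X_s$ at $t$, equivalently $X_s\to X_t$ in $\cM_F(\R^d)$ as $s\uparrow t$. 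For the last assertion, let $B$ be an open or closed ball; its boundary $\partial B$ is a sphere, hence Lebesgue-null, and since $\beta<\frac{\alpha}{d}$ forces $X_t$ to be absolutely continuous we get $X_t(\partial B)=0$ a.s.\ (in fact, a.s.\ $X_t(\partial B')=0$ for all balls $B'$ simultaneously, as $X_t\ll$ Lebesgue). On the a.s.\ event where $X_s\to X_t$ weakly and $X_t(\partial B)=0$, the portmanteau theorem yields $X_s(B)\to X_t(B)$, both for the open and for the closed ball.

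The only non-routine point is the passage from $X_{t^-}\stackrel{d}{=}X_t$ to the almost sure identity $X_{t^-}=X_t$. Distributional equality is cheap --- it follows, for instance, from the Laplace functional \eqref{e_Lap} together with joint continuity of $u^\phi$ on $Q$, which give $E^X_{X_0}(\exp(-X_s(\phi)))\to\exp(-X_0(u^\phi_t))=E^X_{X_0}(\exp(-X_t(\phi)))$ --- but the almost sure statement needs the order relation $X_{t^-}\le X_t$ supplied by positivity of the jumps, since a second-moment comparison is unavailable when $\beta<1$. If one prefers to avoid the jump structure entirely, an alternative is to use quasi-left-continuity of the superprocess, viewed as a Hunt process, applied to the deterministic times $t-1/n\uparrow t$, which gives $X_{t-1/n}\to X_t$ a.s.\ directly.
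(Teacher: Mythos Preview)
Your proof is correct, and the second part (convergence of $X_s(B)$ via portmanteau and $X_t(\partial B)=0$ from absolute continuity) is exactly what the paper does. For the first part, however, the paper takes a different route: it simply cites Lemma~1.6 of \cite{FMW2010}, which describes the discontinuities of $s\mapsto X_s$ via a jump measure $N(d(s,x,r))$ whose compensator is absolutely continuous in time, so that almost surely no jump occurs at the fixed time $t$.

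Your argument is more elementary and self-contained: rather than appealing to the full jump structure, you use only two soft facts, namely that $X_s(1)$ is a critical (mean-preserving) martingale and that the jumps of $X$ are non-negative measures. From $E[X_t(1)-X_{t^-}(1)]=0$ and $X_t-X_{t^-}\ge 0$ you get $X_t=X_{t^-}$ a.s.\ directly. This avoids importing the compensator description from \cite{FMW2010}, at the cost of invoking spectral positivity of the $(1+\beta)$-branching mechanism (which is standard but not proved in the paper either). The paper's approach is a one-line citation; yours exposes the mechanism and would generalize to any critical superprocess with non-negative jumps. Your alternative via quasi-left-continuity of the Hunt process is also valid and arguably the cleanest route of all.
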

\begin{proof}
Fix $t>0$. The claim is a consequence of Lemma 1.6 of \cite{FMW2010}. Part (a) of that lemma states that the discontinuities of $t \to X_t$ are described by a jump measure $N(d(s,x,r))$, and the form of the compensator of $N$ given in part (b) implies that a.s. there is no jump at time $t$. Hence $X_s \to X_t$ weakly as $s \uparrow t$. For an open or closed ball $B$, the fact that $X_s(B) \to X_t(B)$ as $s \uparrow t$ follows from weak convergence and the fact that $X_t(\partial B) = 0$ (by absolute continuity).
\end{proof}

\begin{proof}[Proof of Proposition~\ref{prop_pointflat}] Let $X_0 \in \cM_F(\R^d)$, $x \in \R^d$ and $t>0$. Our method is to show that $P_{X_0}^X( X_t(x) = 0,  X_t \neq 0 ) = 0$. This implies that $P_{X_0}^X(X_t(x) > 0 , X_t \neq 0) = P_{X_0}^X(X_t \neq 0)$. At the end of the proof we discuss the case for canonical measure and show that $u^\infty_t = U_t$.

Recall that $B(x,r) = \{y \in \R^d:|y-x| \leq r\}$. From Theorem A, we have $\text{supp}(X_t) = \R^d$ a.s. on $\{X_t\neq 0\}$. In particular, $P_{X_0}^X$-a.s. on $\{X_t \neq 0 \}$ we have $X_t(B(x,1)) > 0$. In other words, 
\[ P_{X_0}^X \left(\{X_t \neq 0\} \cap  \left( \cup_{n=1}^\infty \left\{ X_t(B(x,1)) \geq \frac 1 n \right\} \right)^c \,\right) = 0.\]
Thus it suffices to show that $P_{X_0}^X(\{X_t(x) = 0\} \cap A_\kappa) = 0$ for every $\kappa > 0$, where we define the event 
\begin{equation}
A_\kappa = \{X_t(B(x,1)) \geq \kappa\}. \nonumber
\end{equation} 

Let $\kappa>0$ and consider the event $A_{2\kappa}$. By Lemma~\ref{lemma_contfix}, $\lim_{s\uparrow t}X_s(B(x,1)) = X_t(B(x,1)) \geq 2\kappa$. We let $\delta_n = 2^{-n}$ and note that the previous statement implies that $X_{t-\delta_n}(B(x,1)) \geq \kappa$ for $n$ sufficiently large (depending on $\omega$). That is, for a.e. $\omega \in A_{2 \kappa}$,
\begin{equation} \label{asymp_mass}
\exists \, N = N(\omega) \text{ such that } n \geq N \Rightarrow  X_{t-\delta_n}({B(x,1)}) \geq \kappa.
\end{equation} 
Fix $\omega$ and $N$ as in \eqref{asymp_mass} and let $n \geq N$. Applying the Markov property to $X$ at time $t - \delta_n$, we obtain
\begin{align}
P(X_t(x) = 0 \, | \, \cF_{t-\delta_n})(\omega) &= P^X_{X_{t-\delta_n}(\omega)}(X_{\delta_n}(x) = 0) \nonumber
\\ &= \exp \left( -\int u_{\delta_n}^\infty(y - x) \,X_{t-\delta_n}(dy) \right), \nonumber
\end{align}
where the second equality uses \eqref{e_pointLap_inf}. We have also suppressed the dependence on $\omega$ in the last expression. We now bound above by ignoring all the mass of $X_{t-\delta_n}$ outside of $B(x,1)$. This gives
\begin{align}
P(X_t(x) = 0 \, | \, \cF_{t-\delta_n})(\omega) &\leq \exp \left( -\int_{B(x,1)} u_{\delta_n}^\infty(y - x) \,X_{1-\delta_n}(dy) \right) \nonumber
\\ &\leq \exp \left( -c_{\ref{lemma_heatkernellowerbd}}  \int_{B(x,1)} \delta_n^{-\frac 1 \beta} p_{1}(\delta_n^{-\frac 1 \alpha}(y - x)) \,X_{1-\delta_n}(dy) \right), \nonumber
\end{align}
where the second inequality uses \eqref{e_uinf_lwrbound}. Since $p_1$ is radially decreasing, the minimum value it can attain in the integral above is $p_1(2\delta_n^{-\frac 1 \alpha})$, and $p_1(2\delta_n^{-\frac 1 \alpha}) \geq c_1 \delta_n^{\frac{d+\alpha}{\alpha}}$ for some $c_1>0$ by \eqref{heatkernelbds2}. Using this and \eqref{asymp_mass}, we obtain that for $c_2 = c_1\cdot c_{\ref{lemma_heatkernellowerbd}}>0$, for $\omega \in A_{2\kappa}$ and $n \geq N(\omega)$,
\begin{align}
P(X_t(x) = 0 \, | \, \cF_{t-\delta_n})(\omega) &\leq \exp \left( -c_2 \delta_n^{-\frac 1 \beta + \frac{d+\alpha}{\alpha}} X_{1-\delta_n}(B(x,1)) \right)  \nonumber
\\ &\leq \exp \left( -c_2 \kappa  \delta_n^{-\frac 1 \beta + \frac{d+\alpha}{\alpha}}\right). \nonumber
\end{align} 
In view of \eqref{asymp_mass}, we have:
\begin{align}\label{e_condprobexpsmall} 
&\text{For $P_{X_0}^X$-a.e. $\omega \in A_{2\kappa}$, for all $n \geq N(\omega)$, we have} \nonumber
\\& \hspace{16 mm} P_{X_0}^X( X_t(x) = 0 \, | \, \cF_{t-\delta_n})(\omega) \leq \exp \left( -c_2 \kappa \, \delta_n^{-\frac 1 \beta + \frac{d+\alpha}{\alpha}}\right).
\end{align}
First suppose that $\beta < \beta^*(\alpha) = \frac{\alpha}{d+\alpha}$. In this case, the exponent of $\delta_n$ in \eqref{e_condprobexpsmall} is negative and so the right hand side of \eqref{e_condprobexpsmall} converges to $0$ as $n \to \infty$. From \eqref{asymp_mass}, \eqref{e_condprobexpsmall} and the tower property, we obtain that
\begin{align}
&P_{X_0}^X(\{X_t(x) = 0\} \cap A_{2\kappa}) \nonumber
\\ & \hspace{8 mm}= E^X_{X_0}( P( \{X_t(x) = 0\} \cap A_{2\kappa}\, | \, \cF_{t-\delta_n})) \nonumber
\\ &\hspace{8 mm}\leq P_{X_0}^X(A_{2\kappa})\,\exp \left( -c_0 \kappa \, \delta_n^{- \frac 1 \beta + \frac{d+\alpha}{\alpha} } \right)  \nonumber
\\ &\hspace{ 18 mm} + P_{X_0}^X(A_{2\kappa} \cap \{N(\omega) > n\}) \nonumber 
\\ &\hspace{8 mm } \to 0\,\, \text{    as $n \to \infty$}. \nonumber
\end{align}
We have therefore shown that
\[ P_{X_0}^X( \{X_t(x) = 0 \} \cap A_{2\kappa}) = 0.\]
This suffices to prove the result, so the proof is complete for $\beta < \frac{\alpha}{d+ \alpha}$. 

Now suppose that $\beta = \frac{\alpha}{d+\alpha}$. Here we use a martingale argument. First, we observe that in this case, \eqref{e_condprobexpsmall} implies there exists $c(\kappa) <1$ such that for $\omega \in A_{2\kappa}$ and $n \geq N(\omega)$,
\begin{equation}
P(X_t(x) = 0 \, | \, \cF_{t-\delta_n})(\omega) \leq c(\kappa). \nonumber
\end{equation}
In particular,
\begin{equation} \label{e_crit_lwrbd}
P(X_t(x) > 0 \, | \, \cF_{t-\delta_n})(\omega) \geq 1 - c(\kappa) > 0
\end{equation}
for $n \geq N(\omega)$. The process $P(X_t(x) > 0 \, | \, \cF_{t-\delta_n})$ is a martingale with respect to the increasing sequence of $\sigma$-algebras $\{\cF_{t-\delta_n}\}_{n=1}^\infty$. By the martingale convergence theorem it follows that
\begin{equation} \label{e_mtgconvergence}
\lim_{n \to \infty} P(X_t(x) > 0 \, | \, \cF_{t-\delta_n})(\omega) = P(X_t(x) > 0 \, | \, \cF_{t^-})(\omega)
\end{equation} 
for $P_{X_0}^X$-a.e. $\omega$, where $\cF_{t-\delta_n} \uparrow \cF_{t^-} := \sigma(X_s : 0 \leq s<t)$. To complete the result, it suffices to show that the right hand side of the above is equal to $1(X_t(x) > 0)(\omega)$ almost surely. By \eqref{e_density_liminf} we have
\begin{align}
X_t(x) &= \liminf_{\epsilon \downarrow 0} \frac{X_t(B(x,\epsilon))}{|B(x,\epsilon)|} = \liminf_{\epsilon \downarrow 0}\frac{X_{t^-}(B(x,\epsilon))}{|B(x,\epsilon)|} \hspace{2 mm} \text{ a.s.}, \nonumber
\end{align}
where $X_{t^-}(B(x,\epsilon)) = \lim_{s \uparrow t}X_s(B(x,\epsilon))$ exists and equals $X_t(B(x,\epsilon))$ a.s. for all $\epsilon > 0$ by \linebreak Lemma~\ref{lemma_contfix}. In a slight abuse of notation, let us denote by $X_{t^-}(x)$ the quantity on the right hand side of the above. Then $X_{t^-}(x) = X_t(x)$ almost surely and $X_{t^-}(x)$ is $\cF_{t^-}$-measurable. We therefore have, for $P^X_{X_0}$-a.e. $\omega$,
\begin{align}
 P(X_t(x) > 0 \, | \, \cF_{t^-})(\omega) &= P(X_{t^-}(x) > 0 \, | \, \cF_{t^-})(\omega)  \nonumber
\\ &= 1(X_{t^-}(x) > 0 )(\omega) \nonumber
\\  &= 1(X_t(x) > 0 )(\omega). \nonumber
\end{align}
Hence \eqref{e_mtgconvergence} implies that
\begin{equation}
\lim_{n \to \infty} P(X_t(x) > 0 \, | \, \cF_{t-\delta_n}) = 1(X_t(x) > 0)\hspace{3 mm} \text{$P_{X_0}^X$-a.s.} \nonumber
\end{equation}

By \eqref{e_crit_lwrbd}, it follows that, on the event $A_{2\kappa}$, the left hand side of the above is greater than or equal to $1- c(\kappa)$ for $n \geq N(\omega)$, and hence we have
\begin{equation}
1(X_t(x) > 0) \geq 1 - c(\kappa) >0 \nonumber
\end{equation}
almost surely on $A_{2\kappa}$. Because $1(X_t(x) > 0 ) \in \{0,1\}$, this implies that it must in fact be equal to $1$. In other words, $P_{X_0}^X(\{X_t(x) = 0\} \cap A_{2\kappa}) = 0$ for all $\kappa>0$, which proves the result. Hence the proof is complete for the case $\beta = \frac{\alpha}{d + \alpha}$, and we are done.

Having shown that $P^X_{X_0}( X_t(x) > 0 \, | \, X_t \neq 0) = 1$, the result under $\N_0$ then follows by \eqref{clustertransfer}. In particular, we obtain that $\N_0(X_t(x) > 0) = \N_0( X_t \neq 0)$, and hence that $u^{\infty}_t(x) = U_t$. \end{proof}

\section{Strict positivity of the density} \label{s_pos}
In this section we prove Theorem~\ref{thm_positiveEverwhere}, which states that the density is strictly positive under certain conditions in the continuous case. In particular, in dimension one ($d=1$) with $\alpha > 1+\beta$ (continuity) and $\beta < \beta^*(\alpha)$ (strong instantaneous propagation), we have
\[X_t(x) > 0 \text{ for all $x \in \R$ almost surely on $\{X_t \neq 0\}$}\]
at a fixed time $t>0$. The proof of the result hinges in part on the following result, which gives an exponential rate of decay for the left tail of the density conditional on non-negligible nearby mass. Its proof shares many ideas with the proof of Proposition~\ref{prop_pointflat}. 

The following holds for general dimensions $d \in \N$. We denote $B_R = \{ x \in \R^d : |x| \leq R\}$.
\begin{lemma} \label{lemma_expbd}
Let $\beta < \frac{\alpha}{d+\alpha}$. Let $R\geq 1$ and $t \in (0,1]$. There is a constant $c_{\ref{lemma_expbd}}>0$ which depends only on $(\alpha,\beta,d)$ such that for any $X_0 \in \cM_F(\R^d)$,
\begin{equation} 
P_{X_0}^X \left( X_t(x) \leq t^{\frac{\alpha - \beta}{\alpha \beta}} \right) \leq e \cdot \exp \left(-c_{\ref{lemma_expbd}} \frac{X_0(B_R)}{R^{d+\alpha}} \,t^{-q}\right) \nonumber
\end{equation}
for all $x \in B_R$, where $q = \frac 1 \beta - \frac{d+\alpha}{\alpha} > 0$.
\end{lemma}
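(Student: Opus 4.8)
The plan is to combine an exponential Chebyshev (Markov) bound with the heat-kernel lower bound for $u^1_1$ from Lemma~\ref{lemma_heatkernellowerbd}, choosing the exponential-tilt parameter to align exactly with the threshold $t^{\frac{\alpha-\beta}{\alpha\beta}}$.

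First I would fix $\lambda>0$ and apply Markov's inequality to $e^{-\lambda X_t(x)}$, using the Laplace functional \eqref{e_pointLap}:
\[
P_{X_0}^X\bigl(X_t(x)\le \lambda^{-1}\bigr)=P_{X_0}^X\bigl(e^{-\lambda X_t(x)}\ge e^{-1}\bigr)\le e\cdot E_{X_0}^X\bigl(e^{-\lambda X_t(x)}\bigr)=e\cdot\exp\Bigl(-\int u^\lambda_t(y-x)\,X_0(dy)\Bigr).
\]
Choosing $\lambda=t^{-\frac{\alpha-\beta}{\alpha\beta}}$ makes $\lambda^{-1}$ equal to the threshold in the statement and produces exactly the prefactor $e$. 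It then remains to bound $\int u^\lambda_t(y-x)\,X_0(dy)$ from below, uniformly over $x\in B_R$.

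Next I would invoke the scaling identity \eqref{e_scalepoint}. For this particular $\lambda$ one has $t^{\frac{\alpha-\beta}{\alpha\beta}}\lambda=1$, so $u^\lambda_t(z)=t^{-\frac1\beta}u^1(1,t^{-\frac1\alpha}z)$, and Lemma~\ref{lemma_heatkernellowerbd} (applied with $\lambda=1$) gives $u^1(1,w)\ge c_{\ref{lemma_heatkernellowerbd}}\,p_1(w)$; hence $u^\lambda_t(z)\ge c_{\ref{lemma_heatkernellowerbd}}\,t^{-\frac1\beta}p_1(t^{-\frac1\alpha}z)$. Restricting the integral to $B_R$, for $x,y\in B_R$ we have $|y-x|\le 2R$, so by radial monotonicity of $p_1$ and the lower heat-kernel bound \eqref{heatkernelbds2}, using $R\ge 1$ and $t\le 1$ to discard the minimum,
\[
p_1\bigl(t^{-\frac1\alpha}(y-x)\bigr)\ge p_1\bigl(2R\,t^{-\frac1\alpha}\bigr)\ge c\,\frac{t^{\frac{d+\alpha}{\alpha}}}{R^{d+\alpha}}.
\]
Combining the last displays,
\[
\int u^\lambda_t(y-x)\,X_0(dy)\ge c'\,\frac{X_0(B_R)}{R^{d+\alpha}}\,t^{-\frac1\beta+\frac{d+\alpha}{\alpha}}=c'\,\frac{X_0(B_R)}{R^{d+\alpha}}\,t^{-q},
\]
since $q=\frac1\beta-\frac{d+\alpha}{\alpha}$ (and $q>0$ precisely because $\beta<\frac{\alpha}{d+\alpha}$). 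Substituting into the Chebyshev bound gives the claim with $c_{\ref{lemma_expbd}}=c'$.

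There is no serious obstacle here: the only genuine input is the probabilistic heat-kernel lower bound of Lemma~\ref{lemma_heatkernellowerbd}, and the rest is bookkeeping of scaling exponents. The one point to handle carefully is the choice $\lambda=t^{-\frac{\alpha-\beta}{\alpha\beta}}$, which is simultaneously the reciprocal of the threshold and the value for which the rescaled initial mass $t^{\frac{\alpha-\beta}{\alpha\beta}}\lambda$ equals $1$, so that the unscaled estimate $u^1_1\ge c\,p_1$ applies verbatim; this alignment is what makes both the prefactor $e$ and the exponent $-q$ fall out cleanly, and it also explains why no restriction $\lambda\ge 1$ is needed beyond what Lemma~\ref{lemma_heatkernellowerbd} already provides.
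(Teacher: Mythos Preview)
Your proof is correct and follows essentially the same approach as the paper: the exponential Markov inequality with $\lambda=t^{-\frac{\alpha-\beta}{\alpha\beta}}$, the Laplace functional \eqref{e_pointLap}, the scaling identity \eqref{e_scalepoint} reducing to $u^1_1$, Lemma~\ref{lemma_heatkernellowerbd}, and the heat-kernel lower bound on $B_R$. The only cosmetic difference is that the paper writes the first step as the pointwise inequality $1(X_t(x)\le\lambda^{-1})\le\exp(1-\lambda X_t(x))$ rather than as Markov applied to $e^{-\lambda X_t(x)}$, which is the same thing.
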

\begin{proof}
Fix $t \in(0,1]$, $R\geq 1$ and $X_0 \in \cM_F(\R^d)$. Let $\lambda = t^{-\frac{\alpha - \beta}{\alpha\beta}}$. One can verify directly that
\[ 1(X_t( x)\leq \lambda^{-1} ) \leq \exp \left( 1 - \lambda X_t(x)\right).\]
Using the above and applying \eqref{e_pointLap}, we have
\begin{align} \label{eexp_aux1}
P_{X_0}^X \left( X_t(x) \leq \lambda^{-1} \right) &\leq e \cdot E_{X_0}^X \left( \exp\left(-\lambda X_t(x) \right) \right)\nonumber
\\ &= e \cdot \exp \left(- \int u^\lambda(t,y-x) X_0(dy) \right) \nonumber
\\ &\leq e \cdot \exp \left(- \int_{B_R} u^\lambda(t,y-x) X_0(dy) \right).
\end{align}
In the last line we simply disregard the mass of $\mu$ outside of $B_R$. Next we obtain a lower bound on the integrand in the above. From \eqref{e_scalepoint}, we have
\begin{align}
u^\lambda(t,y-x) &= t^{-\frac 1 \beta} u^{\lambda \cdot t^{\frac{\alpha - \beta}{\alpha \beta}}}(1, t^{-\frac 1 \alpha}(y-x)) \nonumber
\\ &= t^{-\frac 1 \beta} u^{1}(1, t^{-\frac 1 \alpha}(y-x)) \nonumber
\\ &\geq c_{\ref{lemma_heatkernellowerbd}} t^{-\frac 1 \beta} p_1( t^{-\frac 1 \alpha}(y-x)). \nonumber
\end{align}
The second line uses the fact that $\lambda t^{\frac{\alpha - \beta}{\alpha \beta}} = 1$ and the third follows from Lemma~\ref{lemma_heatkernellowerbd}. Finally, for all $x,y \in B_R$, we use \eqref{heatkernelbds2} and the above to obtain that
\begin{equation}
u^\lambda(t,y-x) \geq c \,\frac{t^{-\frac 1 \beta + \frac{d+ \alpha}{\alpha}}}{R^{d+\alpha}} \hspace{5 mm}\forall \, x,y \in B_R \nonumber
\end{equation}
for a constant $c >0$ depending only on $(\alpha,\beta,d )$. Using the above in \eqref{eexp_aux1}, we obtain that
\begin{align}
P_{X_0}^X \left( X_t(x) \leq \lambda^{-1} \right) &\leq   e \cdot \exp \left(- c X_0(B_R) \,\frac{t^{-\frac 1 \beta + \frac{d+ \alpha}{\alpha}}}{R^{d+\alpha}} \right). \nonumber
\end{align}
Since $\lambda = t^{-\frac{\alpha - \beta}{\alpha\beta}}$, this completes the proof. \end{proof}

Besides the above, the other main ingredient in the proof of Theorem~\ref{thm_positiveEverwhere} is H\"older continuity of $X_t(x)$, which we discussed in Section~\ref{s_density}. In particular we will use \eqref{e_localmodulus}. As can be seen from the proof, the actual index of H\"older continuity is irrelevant. Any positive index works.

\begin{proof}[Proof of Theorem~\ref{thm_positiveEverwhere}] Let $d=1$ and $\alpha > 1+\beta$. Let $X_0 \in \cM_F(\R^d)$. By scaling it is sufficient to consider the time $t=1$. We will show that $X_1(x) > 0$ for all $x \in [-R,R]$ $P^X_{X_0}$-a.s. for every $R \geq 1$, and hence that $X_1(x) > 0 $ for all $x \in \R$.

Fix $R \in \N$. As in the proof of Proposition~\ref{prop_pointflat}, we will use instantaneous propagation. In particular, by Theorem A we have
\[ P^X_{X_0} (X_1([-R,R]) > 0 \, | \, X_1 \neq 0) = 1 .\]
Let $E^{R,\kappa} = \{X_1([-R,R]) \geq \kappa\}$. By the above, it suffices to show that
\begin{equation} \label{pos_probsufficient}
X_1(x)>0 \,\text{  for all $x \in [-R,R]\,$   a.s. on $E^{R,\kappa}$}  
\end{equation}
for all $\kappa >0$. We fix $\kappa > 0$ and consider the event $E^{R,2\kappa}$. For a sequence $\{\delta_n\}_{n \in \N} = \{2^{-\gamma n}\}_{n \in \N}$, with $\gamma > 0$ to be specified later, we define events $B_n^{R,\kappa}$ by
\begin{equation} \label{def_Bn}
B_n^{R,\kappa} = \{X_{1-\delta_n} ([-R,R]) \geq \kappa  \}. \nonumber
\end{equation}
By Lemma~\ref{lemma_contfix}, $\lim_{s \uparrow 1} X_s([-R,R]) = X_1([-R,R]) \geq 2\kappa$ a.s. on $E^{R,2\kappa}$, and hence for a.e. $\omega \in E^{R,2\kappa}$ there is $s_0(\omega)<1$ such that $X_{s}([-R,R]) \geq \kappa$ for all $s \in [ s_0(\omega),1]$. Hence $B_n^{R,\kappa}$ occurs for sufficiently large $n$, that is,
\begin{equation} \label{events_E_B}
 E^{R,2\kappa} \subseteq \{B_n^{R,\kappa} \text{ eventually}\},
\end{equation}
where
\begin{equation} 
\{B_n^{R,\kappa} \text{ eventually}\}= \bigcup_{N=1}^\infty \bigcap_{n=N}^\infty B_n^{R,\kappa}. \nonumber
\end{equation}
For $n \in\N$, let $\Lambda_n$ denote the set of dyadic lattice points at scale $2^{-n}$, i.e. $\Lambda_n = 2^{-n} \Z$. Recall that $R \in \N$. We then let $\Lambda_n^R = [-R,R] \cap \Lambda_n$, that is
\[\Lambda_n = \{-R + k2^{-n} : k=0,1,\hdots,2R2^n\}.\]
Next, we define
\begin{equation} \label{events_Fn}
F_n = F_n(R) = \{ X_1(x) > \delta_n^{\frac{\alpha - \beta}{\alpha\beta}}\,\,\forall x \in \Lambda_n^R\}.
\end{equation}
The first step of our proof is to show that 
\begin{equation} \label{prob_io_0}
P_{X_0}^X(E^{R,2\kappa} \cap \{F_n^c \text{ i.o.}\}) = 0,
\end{equation}
where \emph{i.o.} is short for \emph{infinitely often}, meaning
\[\{F_n^c \text{ i.o.}\} = \bigcap_{N=1}^\infty \bigcup_{n=N}^\infty F_n^c. \]
We now show that \eqref{prob_io_0} holds. By \eqref{events_E_B}, we have
\begin{align}
P_{X_0}^X (E^{R,2\kappa} \cap \{F_n^c \text{ i.o.}\}) &\leq P^X_{X_0}(\{B_n^{R,\kappa} \text{ eventually}\} \cap \{F_n^c \text{ i.o.}\}) \nonumber
\end{align}
Suppose that $\omega$ is in the event on the right hand side of the above; then (i) there is $N(\omega)$ such that $\omega \in B_n^{R,\kappa}$ for all $n \geq N(\omega)$, and (ii) for any $m \in \N$, there is $n > m$ so that $\omega \in F_n^c$. Together, (i) and (ii) imply that for any $m \geq N(\omega)$, there is $n > m$ such that $\omega \in B_n^{R,\kappa} \cap F_n^c$. That is, the above event is a sub-event of $\{B_n^{R,\kappa} \cap F_n^c \, \text{ i.o.}\}$. Hence
\begin{align} \label{limsupprobbd}
P_{X_0}^X (E^{R,2\kappa} \cap \{F_n^c \text{ i.o.}\}) &\leq P^X_{X_0} (\{B_n^{R,\kappa} \cap F_n^c \text{ i.o.}\}) \nonumber
\\ &=  P^X_{X_0} \bigg(\bigcap_{N=1}^\infty \, \bigcup_{n=N}^\infty B_n^{R,\kappa} \cap F_n^c \bigg) \nonumber
\\ &=\lim_{N \to \infty} P^X_{X_0} \bigg(\bigcup_{n=N}^\infty B_n^{R,\kappa} \cap F_n^c \bigg) \nonumber
\\ &\leq \lim_{N \to \infty} \sum_{n=N}^\infty P^X_{X_0} ( B_n^{R,\kappa} \cap F_n^c ).
\end{align}
We bound the probabilities arising in the final term using Lemma~\ref{lemma_expbd}. We condition on $\cF_{1-\delta_n}$ and note that $B_n^{R,\kappa} \in \cF_{1-\delta_n}$. By the Markov property, we have
\begin{align}
P ( B_n^{R,\kappa} \cap F_n^c \, | \, \cF_{1-\delta_n}) &=  1(B_n^{R,\kappa}) P^X_{X_{1-\delta_n}} (F_n^c). \nonumber
\\ &\leq \sum_{x \in \Lambda^R_n} 1(B_n^{R,\kappa}) P^X_{X_{1-\delta_n}} \Big(X(\delta_n,x) \leq \delta_n^{\frac{\alpha - \beta}{\alpha \beta}}\Big). \nonumber
\end{align}
By the definition of $B_n^{R,\kappa}$, in the above we need only compute the probability for such $X_{1-\delta_n}$ as satisfy $X_{1-\delta_n}([-R,R]) \geq \kappa$, in which case we can apply Lemma~\ref{lemma_expbd} for each $x \in \Lambda_n^R$. From this we obtain (recall that $|\Lambda_n^R| = 2R2^n$)
\begin{equation}  
P( B_n^{R,\kappa} \cap F_n^c \, | \, \cF_{1-\delta_n}) \leq C_1(R) \,2^n \exp \left(-c_{\ref{lemma_expbd}} \frac{\kappa}{R^{1+\alpha}} \,\delta_n^{-q} \right), \nonumber
\end{equation}
where $C_1(R) = 2Re$. Recall that we have chosen $\delta_n = 2^{-\gamma n}$, and so, substituting the above into \eqref{limsupprobbd}, we obtain
\begin{align}
P_{X_0}^X (E^{R,2\kappa} \cap \{F_n^c \text{ i.o.}\}) &\leq \lim_{N \to \infty} \sum_{n=N}^\infty C_1(R)\,2^n \exp \left(-c_{\ref{lemma_expbd}} \frac{\kappa}{R^{1+\alpha}} \,2^{\gamma q n} \right) \nonumber
\\ &= 0. \nonumber
\end{align} 
Thus we have shown that \eqref{prob_io_0} holds, implying that $\{F_n \text{ eventually}\}$ occurs a.s. on $E^{R,2\kappa}$. Recalling the definition of $F_n$ from \eqref{events_Fn} and that $\delta_n = 2^{-\gamma n}$, it therefore holds that
\begin{align} \label{lattice_pos}
&\text{For a.e. $\omega \in E^{R,2\kappa}$, there is $N(\omega) \in \N$ such that for all $n \geq N(\omega)$,} \nonumber \\
& \hspace{ 30 mm} \text{$X_1(x) > 2^{-\gamma \frac{\alpha - \beta}{\alpha \beta} n }$ for each $x \in \Lambda_n^R$.}
\end{align}
Next we use the H\"older continuity of $X_t(\cdot)$. Let $0 < \eta < \frac{\alpha}{1+\beta} - 1$. By \eqref{e_localmodulus} with $K = [-R,R]$, for a random constant $C_2(R) = C_2(R, \eta,\omega)>0$,
\begin{equation} \label{modulus_R}
|X_1(x_1) - X_1(x_2)| \leq C_2(R)|x_1 - x_2|^\eta \,\, \text{  for all $x_1,x_2 \in [-R,R]$.}
\end{equation}
Having chosen $\eta$, we can now choose a corresponding value of $\gamma$. Let
$\gamma = \frac{\alpha \beta}{\alpha - \beta} \frac{\eta}{2}$. Then by \eqref{lattice_pos}, for $n$ sufficiently large,
\begin{equation} \label{lattice_pos2}
X_1(x) > 2^{-\frac{\eta}{2} n} \,\,\, \text{ for each $x \in \Lambda_n^R$.}
\end{equation}
Let $y \in [-R,R]$. We define $[y]_{n} = \min \{ x \in \Lambda_n^R : x \geq y \}$. That is, if $y \not \in \Lambda_n^R$, then $[y]_{n}$ the point in $\Lambda_n^R$ nearest to $y$ on the right; if $ y \in \Lambda_n^R$ , then $[y]_{n} = y$. Note that $|y - [y]_{n}| < 2^{-n}$ for all $y \in [-R,R]$ by the definition of $\Lambda_n^R$. Hence by \eqref{modulus_R},
\begin{align} \label{lattice_approx}
\sup_{y \in[-R,R]} |X_1(y) - X_1([y]_{n})| \leq C_2(R) \,2^{-\eta n }.
\end{align}
By the triangle inequality, for $y \in [-R,R]$,
\begin{align}
X_1(y) \geq X_1([y]_n) - |X_1(y) - X_1([y]_{n})|. \nonumber
\end{align}
Note that $\{[y]_n : y \in [-R,R]\} = \Lambda_n^R$. Taking the infimum of the above over $[-R,R]$ and applying \eqref{lattice_approx}, we obtain that
\begin{align}
\inf_{y \in [-R,R]} X_1(y) &\geq \left(\inf_{y \in [-R,R]} X_1( [y]_n) \right) - \sup_{y \in [-R,R]}|X_1(y) - X_1([y]_{n})| \nonumber
\\ &\geq \inf_{x \in \Lambda_n^R} X_1( x) - C_2(R) \,2^{-\eta n}. \nonumber
\end{align}
By \eqref{lattice_pos2}, for all sufficiently large $n$ we therefore have
\begin{align}
\inf_{y \in [-R,R]} X_1(y) &\geq  2^{-\frac{\eta}{2}n} - C_2(R) \,2^{-\eta n} \nonumber
\\ &= 2^{-\frac \eta 2 n}\big( 1 - C_2(R) 2^{-\frac \eta 2 n} \big). \nonumber
\end{align}
By taking $n$ to be large enough in comparison to $C_2(R)$, the right hand side is positive. This proves that the density is strictly positive on $[-R,R]$ a.s. on $E^{R,2\kappa}$. Hence \eqref{pos_probsufficient} holds and the proof is complete. \end{proof}

\section{Almost sure charging of (F1)-$s$ measures when $\beta \leq \beta^*(\alpha,s)$}\label{s_flat}
In this section we prove that, under some conditions on $\alpha$ and $\beta$, $\mu(X_t) > 0$ almost surely on $\{X_t \neq 0 \}$ for certain measures $\mu$, which is equivalent to $\N_x(\mu(X_t) >0) = u^{\infty \mu}_t = U_t$. More precisely, this section contains the proof of Theorem~\ref{thm_flatFrost}(a). We recall the Frostman condition (F1) for a measure: for $s \in [0,d]$, $\mu \in \cM_F(\R^d)$ satisfies (F1)-$s$ if
\begin{itemize}
\item[] (F1)-$s$ \hspace{2 mm} For some constant $\overline{C}$, for all $x \in \R^d$ and $r>0$,
\begin{equation}
\mu(B(x,r)) \leq \overline{C} r^s. \nonumber
\end{equation}
\end{itemize}

Theorem~\ref{thm_flatFrost}(a) states that if $\mu \in \cM_F(\R^d)$ satisfies (F1)-$s$ and $\beta \leq \beta^*(\alpha,s) = \frac{\alpha}{(d-s) + \alpha}$, then $\mu(X_t) > 0$ almost surely on $\{X_t \neq 0 \}$ and, equivalently, $u^{\infty \mu}_t = U_t$.

Without loss of generality, we can assume that $\text{supp}(\mu)$ is bounded. Indeed, if (F1)-$s$ holds for $\mu$, then it also holds for the restriction of $\mu$ to a bounded set. Furthermore, if $\mu'$ denotes this restriction, then $u^{\lambda \mu'}_t \leq u^{\lambda \mu}_t$ for $\lambda > 0$ by the comparison principle, and hence it suffices to show that $u^{\infty \mu'}_t = U_t$. We will further assume that $\text{supp}(\mu) \subseteq B_1 = \{x \in \R^d : |x| \leq 1\}$. This is allowable because, by translation invariance of \eqref{e_pde}, $u^{\infty \mu}_t$ is flat if and only if $u^{\infty \mu_z}_t$ is flat, where $\mu_z$ is the translate of $\mu$ by $z \in \R^d$. We can therefore translate $\mu$ so that it has positive mass in $B_1$, then discard the mass outside $B_1$ by the previous argument.

We set the following standing assumption: for the remainder of this section, let $\mu \in \cM_F(\R^d)$ satisfy (F1)-$s$ for some $s \in [0,d]$ and $\mu(B_1^c) = 0$. Without loss of generality we will suppose that $\mu(B_1) = \mu(\R^d) = 1$.

The proof of Theorem~\ref{thm_flatFrost}(a) uses a similar argument to the proof of Proposition~\ref{prop_pointflat}. Recall that the bound $u^\infty(1,x) \geq c p_1(x)$ from Lemma~\ref{lemma_heatkernellowerbd} played a critical role in that result. We use a similar bound here, which however is adapted to $u^{\infty \mu}_t$ for $\mu$ satisfying (F1)-$s$. By monotonicity of $\lambda \to u^{\lambda \mu}_t$, we have the trivial bound that $u^{\infty \mu}_t \geq u^{\lambda \mu }_t$ for all $\lambda >0$. As we will also be using scaling properties of these solutions, which involve rescaling the initial measure (see \eqref{e_scale}), the critical scale turns out to be $u^{r^s \mu(\cdot / r)}(1,x)$, where we recall that for $r>0$, $\mu(\cdot / r)$ is the measure given by $\mu(A/r) = \int 1_A(rx) d\mu(x)$. If $\mu$ has support $\cS$, then the support of $\mu(\cdot / r)$ is $r\cS = \{rx : x \in \cS\}$. The next result, which is analogous to Lemma~\ref{lemma_heatkernellowerbd}, gives a lower bound for $u^{r^s \mu(\cdot / r)}(1,x)$.
\begin{lemma} \label{lemma_heatlwrbd_Frost} Let $\mu \in \cM_F(\R^d)$ satisfy (F1)-$s$ for some $s \in [0,d]$. Then there is a constant $c_{\ref{lemma_heatlwrbd_Frost}} = c_{\ref{lemma_heatlwrbd_Frost}}(\mu,\alpha,\beta,d) > 0$ such that for all $r\geq 1$,
\begin{equation}
u^{r^s \mu(\cdot / r)}(1,x) \geq c_{\ref{lemma_heatlwrbd_Frost}}\,r^s S_1(\mu(\cdot/r))(x). \nonumber
\end{equation}
\end{lemma}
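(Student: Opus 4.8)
The plan is to run the same Feynman--Kac argument as in the proof of Lemma~\ref{lemma_heatkernellowerbd}, replacing the point mass $\delta_0$ by the rescaled measure $\nu_r:=r^s\mu(\cdot/r)$ and replacing the elementary sup bound on $p_{s+\epsilon}$ by a Frostman-type sup bound on $S_u\nu_r$. The key analytic input I would record first is: if $\nu\in\cM_F(\R^d)$ satisfies (F1)-$s$ with constant $\overline C$, then $\|S_u\nu\|_\infty\leq C\overline C\,u^{-(d-s)/\alpha}$ for all $u>0$, where $C=C(\alpha,d)$. This follows from \eqref{heatkernelbds2} by the standard dyadic decomposition of $\R^d$ into $B(x,u^{1/\alpha})$ and the annuli $B(x,2^ku^{1/\alpha})\setminus B(x,2^{k-1}u^{1/\alpha})$: on the $k$-th annulus $p_u(x-\cdot)\leq C_{\ref{heatkernelbds}}2^{-(k-1)(d+\alpha)}u^{-d/\alpha}$ while $\nu$ of that annulus is at most $\overline C(2^ku^{1/\alpha})^s$, and the resulting geometric series converges because $d+\alpha-s>0$. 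The crucial point is that (F1)-$s$ scales correctly: $\nu_r(B(x,\rho))=r^s\mu(B(x/r,\rho/r))\leq\overline C\rho^s$, so $\nu_r$ satisfies (F1)-$s$ with the \emph{same} constant $\overline C$ for every $r>0$, and hence the bound $\|S_u\nu_r\|_\infty\leq C\overline C u^{-(d-s)/\alpha}$ is uniform in $r$.

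\textbf{The Feynman--Kac step.} Fix $\epsilon\in(0,1]$ and put $\nu_r^\epsilon:=S_\epsilon\nu_r$, which lies in $\cB^+_b$ since $\|\nu_r^\epsilon\|_\infty\leq C\overline C\epsilon^{-(d-s)/\alpha}<\infty$. Applying Lemma~\ref{lemma_FK}(b) to $u^{\lambda\nu_r^\epsilon}$ exactly as in the proof of Lemma~\ref{lemma_heatkernellowerbd} gives
\[ \frac{\partial}{\partial\lambda}u^{\lambda\nu_r^\epsilon}(1,x)=E^W_x\!\left(\nu_r^\epsilon(W_1)\exp\!\left(-(1+\beta)\int_0^1 u^{\lambda\nu_r^\epsilon}_{1-v}(W_v)^\beta\,dv\right)\right). \]
By the heat comparison \eqref{e_heatcomparison} and the semigroup property, $u^{\lambda\nu_r^\epsilon}_{1-v}\leq\lambda S_{1-v}\nu_r^\epsilon=\lambda S_{1-v+\epsilon}\nu_r$, so the Frostman bound yields $\|u^{\lambda\nu_r^\epsilon}_{1-v}\|_\infty\leq C\overline C\,\lambda\,(1-v+\epsilon)^{-(d-s)/\alpha}$. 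Since $\beta<\tfrac\alpha d\leq\tfrac\alpha{d-s}$, the exponent $\beta(d-s)/\alpha$ is strictly less than $1$, so $\int_0^1(1-v+\epsilon)^{-\beta(d-s)/\alpha}\,dv\leq\int_0^1(1-v)^{-\beta(d-s)/\alpha}\,dv<\infty$ uniformly in $\epsilon$. Hence $\int_0^1 u^{\lambda\nu_r^\epsilon}_{1-v}(W_v)^\beta\,dv\leq C_1\lambda^\beta$ for a constant $C_1=C_1(\mu,\alpha,\beta,d)$ that is independent of $\epsilon$, $r$, $x$ and the path $W$. Pulling the (now path-independent) exponential out of the expectation and using $E^W_x(\nu_r^\epsilon(W_1))=S_1\nu_r^\epsilon(x)=S_{1+\epsilon}\nu_r(x)$ gives
\[ \frac{\partial}{\partial\lambda}u^{\lambda\nu_r^\epsilon}(1,x)\geq S_{1+\epsilon}\nu_r(x)\,\exp\!\left(-(1+\beta)C_1\lambda^\beta\right). \]

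\textbf{Conclusion.} Integrating over $\lambda\in[0,1]$ via \eqref{e_FK_FTC} gives $u^{\nu_r^\epsilon}(1,x)\geq c_0\,S_{1+\epsilon}\nu_r(x)$ with $c_0=\int_0^1\exp(-(1+\beta)C_1\lambda^\beta)\,d\lambda>0$, again uniformly in $\epsilon$ and $r$. Letting $\epsilon\downarrow0$: $S_\epsilon\nu_r\to\nu_r$ weakly in $\cM_F(\R^d)$, so Lemma~\ref{lemma_stability}(a) gives $u^{\nu_r^\epsilon}(1,x)\to u^{\nu_r}(1,x)$, while $S_{1+\epsilon}\nu_r(x)\to S_1\nu_r(x)$ by dominated convergence. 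This yields $u^{r^s\mu(\cdot/r)}(1,x)=u^{\nu_r}(1,x)\geq c_0\,S_1\nu_r(x)=c_0\,r^sS_1(\mu(\cdot/r))(x)$, which is the assertion with $c_{\ref{lemma_heatlwrbd_Frost}}=c_0$ (valid in fact for all $r>0$). The only step with real content is the uniform-in-$r$ estimate $\|S_u\nu_r\|_\infty\leq C\overline C u^{-(d-s)/\alpha}$ together with checking that the time integral $\int_0^1(1-v)^{-\beta(d-s)/\alpha}\,dv$ converges; the latter is exactly where the standing hypothesis $\beta<\alpha/d$ is used, and it is what keeps $C_1$ (and hence $c_0$) independent of $\epsilon$ and $r$. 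Everything else is a routine transcription of the argument for Lemma~\ref{lemma_heatkernellowerbd}, so I expect no serious obstacle beyond this bookkeeping.
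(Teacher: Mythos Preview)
Your proof is correct and follows essentially the same route as the paper's: mollify $\nu_r=r^s\mu(\cdot/r)$ by $S_\epsilon$, apply the Feynman--Kac formula of Lemma~\ref{lemma_FK}(b), dominate the exponent using the heat comparison \eqref{e_heatcomparison} together with the uniform bound $\|S_u\nu_r\|_\infty\leq C\,u^{-(d-s)/\alpha}$, integrate in $\lambda$, and pass to the limit $\epsilon\downarrow 0$ via Lemma~\ref{lemma_stability}(a). The paper isolates the uniform sup-bound as a separate lemma (proved via Fubini rather than your dyadic annuli) and, after obtaining the exponent $-(d-s)\beta/\alpha$, replaces it by the cruder $-d\beta/\alpha$ before integrating, but neither difference affects the argument.
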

The proof of Lemma~\ref{lemma_heatlwrbd_Frost} requires the following boundedness result. 

\begin{lemma} \label{lemma_uniformheatbd_Frost}
Let $\mu \in \cM_F(\R^d)$ satisfy (F1)-$s$ for $s \in [0,d]$ and fix $\alpha \in (0,2)$. Then there is a constant $C_{\ref{lemma_uniformheatbd_Frost}} = C_{\ref{lemma_uniformheatbd_Frost}}(\mu,\alpha,d) >0$ such that for all $t > 0$,
\begin{equation}
\sup_{r \geq 1} \sup_{y \in \R^d} S_t (r^s \mu(\cdot  / r))(y) \leq C_{\ref{lemma_uniformheatbd_Frost}} \,t^{-\frac{(d-s)}{\alpha}}. \nonumber
\end{equation}
\end{lemma}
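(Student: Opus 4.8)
The plan is to estimate $S_t(r^s\mu(\cdot/r))(y) = \int p_t(y-z)\,r^s\mu(\cdot/r)(dz)$ by a dyadic decomposition of $\R^d$ into annuli around $y$, using the heat kernel bound \eqref{heatkernelbds2} on each annulus and the (F1)-$s$ mass bound for the measure $r^s\mu(\cdot/r)$. First I would record how (F1)-$s$ transfers under dilation: if $\mu$ satisfies (F1)-$s$ with constant $\overline C$, then for any $\rho>0$ and any ball $B(w,\rho)$ one has $r^s\mu(\cdot/r)(B(w,\rho)) = r^s\mu(B(w/r,\rho/r)) \le r^s\overline C (\rho/r)^s = \overline C\rho^s$, so $r^s\mu(\cdot/r)$ satisfies (F1)-$s$ with the \emph{same} constant $\overline C$, uniformly in $r$. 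Denote $\nu_r := r^s\mu(\cdot/r)$; note also that $\nu_r(\R^d) = r^s$ could be large, so the bound must come from the spatial decay of $p_t$, not from finiteness of mass.

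Next I would carry out the dyadic bound. Fix $t>0$ and $y\in\R^d$. Split $\R^d = B(y,t^{1/\alpha}) \cup \bigcup_{k\ge 0} A_k$ where $A_k = \{z : 2^k t^{1/\alpha} < |z-y| \le 2^{k+1} t^{1/\alpha}\}$. On the central ball use $p_t(y-z) \le C_{\ref{heatkernelbds}} t^{-d/\alpha}$ together with $\nu_r(B(y,t^{1/\alpha})) \le \overline C (t^{1/\alpha})^s = \overline C\, t^{s/\alpha}$, contributing $\lesssim t^{-d/\alpha}\cdot t^{s/\alpha} = t^{-(d-s)/\alpha}$. On $A_k$ use the decay bound $p_t(y-z) \le C_{\ref{heatkernelbds}}\, t/|y-z|^{d+\alpha} \le C_{\ref{heatkernelbds}}\, t\,(2^k t^{1/\alpha})^{-(d+\alpha)}$ and $\nu_r(A_k) \le \nu_r(B(y, 2^{k+1}t^{1/\alpha})) \le \overline C (2^{k+1}t^{1/\alpha})^s$, so the $k$-th term is $\lesssim t\cdot 2^{-k(d+\alpha)} t^{-(d+\alpha)/\alpha}\cdot 2^{ks} t^{s/\alpha} = 2^{-k(d+\alpha-s)}\, t^{-(d-s)/\alpha}$. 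Since $s \le d < d+\alpha$, the exponent $d+\alpha-s > 0$, so $\sum_{k\ge 0} 2^{-k(d+\alpha-s)} < \infty$, and summing the central ball and all annuli gives $S_t\nu_r(y) \le C_{\ref{lemma_uniformheatbd_Frost}}\, t^{-(d-s)/\alpha}$ with $C_{\ref{lemma_uniformheatbd_Frost}}$ depending only on $C_{\ref{heatkernelbds}}$, $\overline C$ (hence $\mu$), $\alpha$, $d$ — and crucially independent of $r\ge 1$ and $y$.

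I do not anticipate a serious obstacle here; the only point requiring a moment's care is the dilation invariance of the (F1)-$s$ constant, which is why I would isolate it first. One minor bookkeeping issue: the definition \eqref{heatkernelbds2} may require slightly adjusted constants to hold simultaneously in both the near-diagonal and far-field forms, but the paper has already fixed $c_{\ref{heatkernelbds}}, C_{\ref{heatkernelbds}}$ so that both bounds hold, so this is not an issue. The geometric series convergence is where $s \le d$ (equivalently $d+\alpha-s>0$) is used; no restriction like $s<\alpha$ is needed for this lemma. Taking the supremum over $r\ge 1$ and $y\in\R^d$ of the bound just derived completes the proof.
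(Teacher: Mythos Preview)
Your proof is correct. The paper proves the same bound by a different but equally standard device: instead of a dyadic annular decomposition, it writes
\[
S_t(r^s\mu(\cdot/r))(y) = r^s\int_0^\infty \mu\big(\{z : p_t(rz-y)\ge k\}\big)\,dk,
\]
observes via \eqref{heatkernelbds2} that the level set $\{z:p_t(rz-y)\ge k\}$ is contained in a ball of radius $c(k^{-1}t)^{1/(d+\alpha)}r^{-1}$ (and is empty for $k>C_{\ref{heatkernelbds}}t^{-d/\alpha}$), and then applies (F1)-$s$ to $\mu$ directly inside the integral. The factors of $r^s$ and $r^{-s}$ cancel, and the remaining $k$-integral $\int_0^{Ct^{-d/\alpha}} k^{-s/(d+\alpha)}\,dk$ is finite precisely because $s\le d$, yielding $Ct^{-(d-s)/\alpha}$.

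The two arguments use exactly the same inputs (the two-sided heat kernel bound and (F1)-$s$) and have the same depth; your dyadic sum and the paper's layer-cake integral are essentially dual ways of integrating a radially decreasing kernel against a Frostman measure. Your version has the minor expository advantage of isolating the dilation invariance of the (F1)-$s$ constant up front, which makes the uniformity in $r$ transparent; the paper's version is marginally shorter since it never names $\nu_r$ and lets the $r$-cancellation happen inside the computation. Either would serve.
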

\begin{proof}
Fix $t,r>0$. For $y \in \R^d$, we have
\begin{align} \label{e_unifheatbd_aux1}
S_t (r^s \mu(\cdot / r))(y) &= r^s \int p_t(\tilde{z}-y) d\mu(\tilde{z}/r) \nonumber
\\ &= r^s \int p_t(rz-y) d\mu(z) \nonumber
\\ &= r^s \int_0^\infty \mu \left( \left\{z : p_t(rz-y) \geq k \right\} \right) dk, \nonumber
\end{align}
where the last line uses Fubini's theorem. By \eqref{heatkernelbds2}, it follows that 
\[\{z : p_t(rz-y) \geq k\} \subseteq \{z : C_{\ref{heatkernelbds}} \frac{t}{|rz-y|^{d+\alpha}} \geq k\} \cap \{ C_{\ref{heatkernelbds}} t^{-\frac d \alpha} \geq k\}. \]
Note that the first set on the right hand side is equal to $B(y/r, c (k^{-1}t)^{\frac{1}{d+\alpha}}r^{-1})$ with $c = C_{\ref{heatkernelbds}}^{\frac{1}{d+\alpha}}$. Using this and the fact that $\mu$ satisfies (F1)-$s$, we have
\begin{align}
S_t (r^s \mu(\cdot / r))(y) &\leq r^s \int_0^{C_{\ref{heatkernelbds}} t^{-\frac d \alpha}} \mu \left( B(y/r, c (k^{-1}t)^{\frac{1}{d+\alpha}}r^{-1}) \right) dk \nonumber 
\\ &\leq \overline{C} r^s r^{-s} t^{\frac{s}{d+\alpha}}\int_0^{C_{\ref{heatkernelbds}} t^{-\frac d \alpha}} k^{-\frac{s}{d+\alpha}} dk \nonumber 
\\ &\leq Ct^{-\frac{(d-s)}{\alpha}}, \nonumber
\end{align}
where we recall that $s \leq d$ and so $C$ depends only on $\overline{C}$, $d$ and $\alpha$.
\end{proof}

\begin{proof}[Proof of Lemma~\ref{lemma_heatlwrbd_Frost}] As in the proof of Lemma~\ref{lemma_heatkernellowerbd}, we will use the Feynman-Kac formula from Section~\ref{s_FK}. We cannot apply Lemma~\ref{lemma_FK} directly to $\frac{\partial}{\partial \lambda} u^{\lambda r^s \mu(\cdot / r)}$ because $r^s \mu(\cdot / r)$ is not a function, so for $\epsilon>0$ we define
\begin{equation} \label{e_mollify}
\psi_{\epsilon,r} = S_\epsilon (r^s \mu(\cdot/r)) = r^s  (\mu(\cdot / r) * p_\epsilon).
\end{equation}
Then $\psi_{\epsilon,r}$ is a smooth, bounded, non-negative function, so by Lemma~\ref{lemma_FK}(b), $w^{\lambda \psi_{\epsilon,r}}(t,x) = \frac{\partial}{\partial \lambda} u^{\lambda \psi_{\epsilon,r}}(t,x)$ exists and
\begin{equation} \label{e_FK_smooth}
w^{\lambda \psi_{\epsilon,r}}(t,x) = E^W_x \left( \psi_{\epsilon,r}(W_t) \exp \left( -(1+\beta) \int_0^t u^{\lambda \psi_{\epsilon,r}}(t-\tau,W_\tau)^\beta d\tau \right) \right).
\end{equation}
By \eqref{e_heatcomparison}, we have
\[ u^{\lambda \psi_{\epsilon,r}}_\tau \leq S_\tau( \lambda \psi_{\epsilon,r}) = \lambda S_\tau( \psi_{\epsilon,r})\]
for $\tau > 0$. Hence by \eqref{e_FK_smooth} with $t=1$ we have
\begin{align} 
w^{\lambda \psi_{\epsilon,r}}(1,x) &\geq E^W_x \left( \psi_{\epsilon,r}(W_1) \exp \left( -(1+\beta)\lambda^\beta \int_0^1  [S_{1-\tau}(\psi_{\epsilon,r})(W_\tau)]^\beta d\tau \right) \right) \nonumber
\\ &= E^W_x \left( \psi_{\epsilon,r}(W_1) \exp \left( -(1+\beta)\lambda^\beta \int_0^1  [S_{1-\tau+\epsilon}(r^s \mu(\cdot / r))(W_\tau)]^\beta d\tau\right) \right), \nonumber
\end{align}
where to obtain the final expression we have used \eqref{e_mollify} and the semigroup property. By Lemma~\ref{lemma_uniformheatbd_Frost},
\[ [S_{1-\tau+\epsilon}(r^s \mu(\cdot / r))(W_\tau)]^\beta \leq C_{\ref{lemma_uniformheatbd_Frost}}^\beta (1-\tau+\epsilon)^{-\frac{(d-s)\beta}{\alpha}},\]
and in particular there is a constant $C>0$ such that for $0 < \epsilon \leq 1$, 
\[ [S_{1-\tau+\epsilon}(r^s \mu(\cdot / r))(W_\tau)]^\beta \leq C (1-\tau+\epsilon)^{-\frac{d\beta}{\alpha}}.\]
Using this bound and changing variables, we obtain that
\begin{equation} \label{e_FK_derivbd}
w^{\lambda \psi_{\epsilon,r}}(1,x) \geq E^W_x (\psi_{\epsilon,r}(W_1)) \exp \left( -C(1+\beta)\lambda^\beta \int_0^1   (u+\epsilon)^{-\frac{ d\beta}{\alpha}} du \right). \nonumber
\end{equation}
Since $\beta < \frac \alpha d$ the integral remains bounded as $\epsilon \downarrow 0$. Hence for a new constant $C>0$, we obtain that for all $0< \epsilon \leq 1$,
\begin{equation} 
w^{\lambda \psi_{\epsilon,r}}(1,x) \geq E^W_x (\psi_{\epsilon,r}(W_1)) \exp \left( -C \lambda^\beta \right).
\end{equation}
We now integrate over $\lambda$ to obtain a lower bound for $u^{\psi_{\epsilon,r}}(1,x)$. Since $w^{\lambda \psi_{\epsilon,r}}(1,x) = \frac{\partial}{\partial \lambda} u^{\lambda \psi_{\epsilon,r}}(1,x)$, by \eqref{e_FK_FTC} and \eqref{e_FK_derivbd} we have
\begin{align} \label{e_heatunifbd_ineq}
u^{\psi_{\epsilon,r}}(1,x) &= \int_0^1 w^{\lambda \psi_{\epsilon,r}}(1,x)\, d\lambda \nonumber
\\ &\geq E^W_x (\psi_{\epsilon,r}(W_1)) \int_0^1 \exp \left( -C \lambda^\beta \right) d\lambda \nonumber
\\ &\geq c_0\, E^W_x (\psi_{\epsilon,r}(W_1)) 
\end{align}
for a constant $c_0>0$. It remains to show that the left and right hand sides of the above converge to the desired quantities when $\epsilon \downarrow 0$. It follows from Lemma~\ref{lemma_stability} that
\begin{equation} \label{e_heatunifbd_limit1}
\lim_{\epsilon \to 0} u^{\psi_{\epsilon,r}}(1,x)  = u^{r^s \mu(\cdot / r)}(1,x).
\end{equation}
Turning to the right hand side of \eqref{e_heatunifbd_ineq}, we first observe that $E^W_x(\psi_{\epsilon,r}(W_1)) = S_1\psi_{\epsilon,r}(x)$. Thus we may use the Dominated Convergence Theorem to see that 
\begin{equation}\label{e_heatunifbd_limit2}
\lim_{\epsilon \to 0} E^W_x(\psi_{\epsilon,r}) = S_1 (r^s \mu(\cdot/r))(x).
\end{equation}
Letting $\epsilon \downarrow 0$ in \eqref{e_heatunifbd_ineq}, from \eqref{e_heatunifbd_limit1} and \eqref{e_heatunifbd_limit2} we obtain that
\begin{equation} 
u^{r^s \mu(\cdot / r)}(1,x) \geq c_0 \,r^s S_1(\mu(\cdot/r))(x) \nonumber
\end{equation}
for all $x \in \R^d$, which completes the proof. \end{proof}

We now have all the tools we need to prove Theorem~\ref{thm_flatFrost}(a). Part (b) is proved in Section~\ref{s_trace}. 

\begin{proof}[Proof of Theorem~\ref{thm_flatFrost}(a)]
Fix $X_0 \in \cM_F(\R^d)$. Let $\mu \in \cM_F(\R^d)$ satisfy (F1)-$s$ as well as our assumptions that $\mu(1) = 1$ and $\mu(B_1^c) = 0$. By Lemma~\ref{lemma_measure_duality}, in particular using \eqref{e_Lapmeasure2} with $u^{\lambda \mu}_t$ and taking $\lambda \to \infty$, we obtain
\begin{equation} \label{e_LapProbFrost}
P^X_{X_0}(\mu(X_t) = 0 ) = \exp ( - X_0(u^{\infty \mu}_t) ).
\end{equation}
Let $\delta_n = 2^{-n}$. Assume that $n$ is large enough so that $\delta_n < t$, and consider the conditional probability $P(\mu(X_t) = 0 \, | \, \cF_{t-\delta_n})$. Applying the Markov property and using \eqref{e_LapProbFrost}, we obtain that
\begin{align} \label{e_probmuzero1}
P(\mu(X_t) = 0 \, | \, \cF_{t-\delta_n}) &= P^X_{X_{t-\delta_n}}(\mu(X_{\delta_n}
) = 0 ) \nonumber
\\&= \exp \left( - \int u^{\infty \mu}(\delta_n,x) dX_{t-\delta_n}(x)\right) \nonumber
\\ &\leq \exp \left( - \int_{B_2} u^{\infty \mu}(\delta_n,x) dX_{t-\delta_n}(x)\right),
\end{align}
where $B_2 = \{x \in \R^d: |x| \leq 2 \}$. Using monotonicity of  $\lambda \to u^{\lambda \mu}(\delta_n,x)$ and the scaling relationship \eqref{e_scalet}, we have
\begin{align} \label{e_uinf_lwrbd1}
u^{\infty \mu}(\delta_n,x) &\geq u^{\delta_n^{-\frac s\alpha -\frac{\alpha - \beta}{\alpha \beta}} \mu}(\delta_n,x) \nonumber
\\ &= \delta_n^{-\frac 1 \beta} u^{\delta_n^{-\frac s \alpha} \mu(\cdot / \delta_n^{-\frac 1\alpha})}(1,\delta_n^{-\frac 1 \alpha} x) \nonumber
\\ &\geq  c_{\ref{lemma_heatlwrbd_Frost}}\,\delta_n^{-\frac 1 \beta  -\frac s \alpha} S_1(\mu(\cdot/\delta_n^{-\frac 1 \alpha}))(\delta_n^{-\frac 1 \alpha}x).
\end{align}
The final inequality follows from Lemma~\ref{lemma_heatlwrbd_Frost}. We expand the semigroup term in the above as a convolution with $p_1$. After a change of variables, we have
\begin{align} \label{e_uinf_lwrbd2}
S_1(\mu(\cdot/\delta_n^{-\frac 1 \alpha}))(\delta_n^{- \frac 1 \alpha}x) &= \int p_1(\delta_n^{-\frac 1 \alpha}(x - y)) \,d\mu(y) \nonumber
\\ &\geq \mu(1) \, p_1( \delta_n^{-\frac 1 \alpha} d(x, \cS)),
\end{align}
where $\cS = \text{supp}(\mu)$ and $d(x,\cS) = \inf_{y \in \cS} |x-y|$, and we recall that for $\rho > 0$, $p_1(\rho)$ denotes $p_1(|z|)$ with $|z| = \rho$. Because $\mu$ is supported on $B_1$, for any $x \in B_2$ we have $d(x,\cS) \leq 3$. In particular, using this in \eqref{e_uinf_lwrbd2} and substituting it into \eqref{e_uinf_lwrbd1}, we obtain
\begin{equation}
u^{\infty \mu}(\delta_n,x) \geq c_{\ref{lemma_heatlwrbd_Frost}}\,\delta_n^{-\frac 1 \beta - \frac s \alpha} p_1( 3 \delta_n^{-\frac 1 \alpha}) \hspace{6 mm} \text{ for all } x \in B_2, \nonumber
\end{equation}
where $\mu(1)$ does not appear because it equals one. Using \eqref{heatkernelbds2} to bound $p_1$ below, we conclude that for a constant $c_1>0$,
\begin{align}\label{e_uinf_lwrbd3}
u^{\infty \mu}(\delta_n,x) &\geq c_1 \delta_n^{-\frac 1 \beta -\frac s \alpha}(\delta_n^{-\frac 1 \alpha})^{-(d+\alpha)} \nonumber
\\ &= c_1 \delta_n^{-q} \hspace{6 mm} \text{ for all } x \in B_2,
\end{align}
where $q := \frac 1 \beta - \frac{d-s + \alpha}{\alpha}$. Using \eqref{e_uinf_lwrbd3} in \eqref{e_probmuzero1}, we obtain the following:
\begin{align} \label{e_frostprobexp}
P(\mu(X_t) = 0 \, | \, \cF_{t-\delta_n}) &\leq \exp \left( - c_1 X_{t-\delta_n}(B_2) \,\delta_n^{-q}\right). 
\end{align}
From this point, the proof is identical to that of Proposition~\ref{prop_pointflat}. By instantaneous propagation, $X_t(B_2) > 0$ almost surely on $\{X_t \neq 0 \}$. One considers the event $A_{2\kappa} = \{X_t(B_2) \geq 2\kappa\}$ for $\kappa >0$ and notes that $X_{t-\delta_n}(B_2) \geq \kappa$ eventually a.s. on $A_{2\kappa}$. This leads to a statement analogous to \eqref{e_condprobexpsmall}. One then finishes the proof in the same way: by direct computation when $\beta < \beta^*(\alpha,s)$ and using martingale convergence when $\beta = \beta^*(\alpha,s)$. This completes the proof that $P^X_{X_0}(\mu(X_t) = 0 \, | \, X_t > 0)$. The result under $\N_x$ follows from \eqref{clustertransfer}, which implies that
\[ \N_x(\mu(X_t) = 0 \,| \, X_t \neq 0) = 0.\]
In particular, we have $\N_x( \mu(X_t) > 0 ) = \N_x( X_t \neq 0 ) = U_t$. Since $u^{\infty \mu}_t(x) = \N_x( \mu(X_t) > 0 )$, this proves the last claim.
\end{proof}

\section{Decay of $\N_x(\mu(X_t)> 0 )$ for (F2)-$s$ measures when $\beta > \beta^*(\alpha,s)$} \label{s_nonflat}

This section is concerned with establishing conditions under which $\N_x(\mu(X_t)>0) = u^{\infty \mu}_t(x)$ is non-flat (and hence $\mu(X_t) = 0$ with positive probability on $\{X_t \neq 0 \}$) and quantifying its asymptotic behaviour under these conditions. The main result we prove is Theorem~\ref{thm_nonflat}. The proofs are analytic and we pose it as an open problem to prove the same results using probabilistic arguments.

We will show that $u^{\infty \mu}_t = \N_x(\mu(X_t) > 0 )$ is non-flat when $\beta > \beta^*(\alpha,s) = \frac{\alpha}{d-s+\alpha}$, where $s \in [0,d]$ and $\mu$ has compact support and satisfies (F2)-$s$, which we recall is the condition that
\begin{itemize}
\item[](F2)-$s$ \hspace{2 mm} For some constant $\underline{C}>0$, for all $x \in \text{supp}(\mu)$ and $r\in(0,1]$,
\begin{equation}
\mu(B(x,r)) \geq \underline{C} r^s. \nonumber
\end{equation}
\end{itemize}

The method we use is to show the existence certain \textit{barrier functions} for the equation \eqref{e_pde}. A function $h: Q \to \R^+$ is a barrier function for $\mu$ if it is a super-solution to \eqref{e_pde} on $Q$ that explodes on $\text{supp}(\mu)$ with order $t^{-\frac 1 \beta}$ and vanishes on $\text{supp}(\mu)^c$ as $t\downarrow 0$. Our method is based on, and adapted from, a similar argument in \cite{CVW2016}. 

First, we define $W:\R^+ \to \R^+$ by 
\begin{equation} \label{e_Wdef}
W(r) = \frac{\log(e+r^2)}{1+r^{d+\alpha}}.
\end{equation}
We also introduce $V:\R^d \to \R^+$, given by
\begin{equation} \label{e_Vdef}
V(x) = W(|x|) = \frac{\log(e+|x|^2)}{1+|x|^{d+\alpha}}.
\end{equation}
For $(t,x) \in Q$, we then define $w_t(x)$ by
\begin{equation} \label{e_wtdef}
w_t(x) = t^{-\frac 1 \beta}(1 + t^{-\frac s \alpha}) V(t^{-\frac 1 \alpha}x).
\end{equation}
Finally, for $k > 0$ and $\mu \in \cM_F(\R^d)$, let $h_k(t,x)$ be given by
\begin{equation} \label{e_hlambdadef}
h_k(t,x) = k (w_t * \mu) (x).
\end{equation}
Note that $w_t \in C^{1,2}(Q)$, the space of functions which are once continuously differentiable in time and twice continuously differentiable in space. Consequently, we also have that $h_k \in C^{1,2}(Q)$. Recall that $\beta^*(\alpha,s) = \frac{\alpha}{(d-s) + \alpha}$. In what follows, we restrict to $s \in[0,\alpha)$, since this is required to have $\beta^*(\alpha,s) < \frac \alpha d$.

For closed $\cS \subset \R^d$, recall that $\cM_F(\cS)$ is the space of measures $\mu \in \cM_F(\R^d)$ with $\text{supp}(\mu) \subseteq \cS$, and that $d(x,\cS) = \inf_{y \in \cS} |x-y|$.

\begin{proposition} \label{prop_linearenvelope}
Suppose that $\mu \in \cM_F(\R^d)$ satisfies (F2)-$s$ for some $s \in [0,\alpha)$ and has compact support $\cS \subset \R^d$. Let $\beta^*(\alpha,s) < \beta < \frac \alpha d$. \\
(a) There exists $\Lambda_0 > 0$ such that if $k \geq \Lambda_0$, $h_k$ is a (strong) supersolution to \eqref{e_pde} on $Q$, in the sense that for all $(t,x) \in Q$,
\begin{equation}
(\partial_t - \Delta_\alpha)h_k(t,x) + h_k(t,x)^{1+\beta} \geq 0.
\end{equation}
(b) For $x \in \cS$ and $t>0$,
\begin{equation} \label{e_prop_linEnv_bd1}
h_k(t,x) \geq  c_{\ref{e_prop_linEnv_bd1}} k t^{-\frac 1 \beta},
\end{equation} 
where $c_{\ref{e_prop_linEnv_bd1}} = \underline{C} \cdot c_0$ and $c_0>0$ depends only on $(\alpha,d)$. For all $(t,x) \in Q$, 
\begin{equation} \label{e_prop_linEnv_bd2}
h_k(t,x) \leq k \mu(1) [t^{-\frac 1 \beta - \frac s \alpha} \vee t^{-\frac 1 \beta}] W(t^{-\frac 1 \alpha}d(x,\cS)).
\end{equation}
In particular, $\lim_{t\to 0} h_k(t,x)= \infty$ for $x \in \cS$, and $h_k(t,\cdot)$ vanishes uniformly on $\{x: d(x,\cS) \geq \rho\}$ as $t\downarrow 0$ for all $\rho >0$.\\
(c) For $(t,x) \in Q$ we have
\begin{equation} \label{e_prop_linEnv_bd3}
h_k(t,x) \geq  k c_{\ref{e_prop_linEnv_bd3}}  t^{-\frac 1\beta} W(t^{-\frac 1 \alpha}d(x,\cS)),
\end{equation}
where $c_{\ref{e_prop_linEnv_bd3}} = \underline{C}\cdot c_1$ and $c_1>0$ depends only on $(\alpha,d)$.\\
(d) For any $\nu \in \cM_F(\cS)$, if $k \geq \Lambda_0$ then $h_k(t,x) \geq u^\nu(t,x)$ on $Q$.
\end{proposition}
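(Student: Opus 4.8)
The plan is to derive part~(d) from the supersolution property established in part~(a) together with a parabolic comparison argument, the delicate point being that $h_k$ does \emph{not} dominate $u^\nu$ near $t=0$ away from $\cS$ (there it vanishes), so one cannot compare from the initial time; instead one compares on $[\tau,\infty)\times\R^d$ for small $\tau>0$, having first verified the ordering at time $\tau$ by a scaling estimate. Throughout, fix $k\geq\Lambda_0$ and $\nu\in\cM_F(\cS)$.

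First I would record two bounds. Since $V=W(|\cdot|)$ is bounded and $\mu$ is finite, $h_k(t,\cdot)=k(w_t*\mu)$ is bounded on $[\tau,\infty)\times\R^d$ for each $\tau>0$, lies in $C^{1,2}(Q)$, tends to $0$ as $|x|\to\infty$, and by part~(a) is a strong supersolution on $Q$. On the other side, $u^\nu\in\cU$, so $u^\nu_t\leq U_t=(\beta t)^{-1/\beta}$, and by \eqref{e_heatcomparison} $u^\nu_t\leq S_t\nu$; since $\text{supp}(\nu)\subseteq\cS$ and $p_t$ is radially decreasing, this gives $u^\nu(t,x)\leq \nu(1)\,p_t(d(x,\cS))\leq C_{\ref{heatkernelbds}}\,\nu(1)\,t^{-d/\alpha}\bigl(1+|t^{-1/\alpha}d(x,\cS)|^{d+\alpha}\bigr)^{-1}$ for all $(t,x)\in Q$, with $u^\nu(t,\cdot)$ bounded and vanishing at spatial infinity.

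Next, the small-time step. Combining the lower bound of part~(c), namely $h_k(t,x)\geq k\,c_{\ref{e_prop_linEnv_bd3}}\,t^{-1/\beta}W(t^{-1/\alpha}d(x,\cS))$, with the elementary inequality $W(r)\geq(1+r^{d+\alpha})^{-1}$, and comparing with the bound on $u^\nu$ above, the factor $(1+|t^{-1/\alpha}d(x,\cS)|^{d+\alpha})^{-1}$ cancels and one is left with $h_k(t,x)/u^\nu(t,x)\geq c\,k\,\nu(1)^{-1}\,t^{\,d/\alpha-1/\beta}$ whenever $u^\nu(t,x)>0$, where $c>0$ depends only on $(\alpha,\beta,d)$ and the $\mathrm{(F2)}$-$s$ constant of $\mu$. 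Since $\beta<\alpha/d$ the exponent $d/\alpha-1/\beta$ is negative, so this ratio tends to $\infty$ as $t\downarrow0$, \emph{uniformly in $x$}; hence there is $\tau_0=\tau_0(\nu,k)>0$ with $h_k(\tau,\cdot)\geq u^\nu(\tau,\cdot)$ on $\R^d$ for every $\tau\in(0,\tau_0]$.

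Finally, fix $\tau\in(0,\tau_0]$ and restrict to $[\tau,\infty)\times\R^d$. There $h_k$ is a bounded classical supersolution, $u^\nu$ (restricted to times $\geq\tau$, hence the bounded weak solution with continuous initial datum $u^\nu(\tau,\cdot)$, cf. Remark~\ref{remark_integbounded}) is a bounded solution, $h_k(\tau,\cdot)\geq u^\nu(\tau,\cdot)$, and both functions vanish as $|x|\to\infty$. The parabolic comparison principle then gives $h_k(t,x)\geq u^\nu(t,x)$ for $t\geq\tau$, and letting $\tau\downarrow0$ yields $h_k\geq u^\nu$ on all of $Q$. The main obstacle is precisely this comparison: the comparison principle of Remark~\ref{remark_comp_prin} is stated for solutions, so dominating the solution $u^\nu$ by the classical supersolution $h_k$ — equivalently, monotone dependence on the nonnegative source $f=(\partial_t-\Delta_\alpha)h_k+h_k^{1+\beta}\geq0$ that $h_k$ carries by part~(a) — must be justified via the fractional parabolic maximum principle (say by passing through the bounded mild solution with datum $h_k(\tau,\cdot)$), where the spatial decay of $h_k$ and $u^\nu$ ensures the relevant supremum over space is attained.
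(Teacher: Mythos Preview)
Your argument is correct and follows the same overall strategy as the paper's: establish $h_k(\tau,\cdot)\geq u^\nu(\tau,\cdot)$ for small $\tau$ using the key fact that $d/\alpha-1/\beta<0$, invoke a parabolic comparison between solution and supersolution on $[\tau,\infty)\times\R^d$, and let $\tau\downarrow 0$. The implementation differs slightly. The paper mollifies $\nu$ by the $V$-based kernels $\phi_n(x)=n^d Z^{-1}V(nx)$, so that $\nu*\phi_n(x)\leq C\,n^d W(nd(x,\cS))$ has exactly the same spatial profile as the lower bound for $h_k(t_n,\cdot)$ from part~(c) at $t_n=n^{-\alpha}$; the comparison is then applied to the solution $u_n=u^{\nu*\phi_n}$ with smooth bounded initial data, and one finishes via the stability Lemma~\ref{lemma_stability}(a). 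You instead bypass mollification by bounding $u^\nu_t\leq S_t\nu$ directly and matching spatial profiles through $W(r)\geq(1+r^{d+\alpha})^{-1}$, which is precisely the device the paper uses later in the proof of Theorem~\ref{thm_nonexistence}. Your route is more direct and avoids the auxiliary sequence $u_n$; the paper's mollification has the modest advantage that the comparison step involves a solution launched from time $0$ with bounded smooth data. Both versions ultimately rely on the same solution-versus-supersolution comparison you flag at the end, which the paper also invokes without further justification.
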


This proposition is the main result underlying Theorem~\ref{thm_nonflat} (as well as Theorem~\ref{thm_inittrace}(b)). Before proving it, we comment on the technique. By and large, our method is adapted from the argument used by Chen, Veron and Wang in \cite{CVW2016} to prove the result we called Theorem B(b) in the introduction. Our barrier function is modelled after theirs and we make use of some of their intermediate results. Define $\tilde{w}_t$ by
\begin{equation}
\tilde{w}_t(x) = t^{-\frac 1 \beta} W(t^{-\frac 1 \alpha}|x|). \nonumber
\end{equation}
In \cite{CVW2016}, it is shown that $k \tilde{w}_t(x)$ is a supersolution to \eqref{e_pde} for sufficiently large $k$. This barrier function is what the authors use to prove that $\lim_{\lambda \to \infty} u^{\lambda \delta_0}(t,x)$ is non-flat when $\beta > \frac{\alpha}{d + \alpha}$. Part of their proof was a detailed analysis of $-\Delta_\alpha V$. In equation (5.11) of \cite{CVW2016}, the following bound is established: there is a constant $c_1>0$ such that for $x \in \R^d$ with $|x| \geq 2$,
\begin{equation} \label{e_VfracLapbd}
-\Delta_\alpha V(x) \geq - \frac{c_1}{1+|x|^{d+ \alpha}}.
\end{equation}
This bound is critical to their argument and it is equally critical in ours which follows. As can be seen from \eqref{e_hlambdadef}, the function $h_k(t,x)$ is essentially $\tilde{w_t}$ spread out over $\cS = \text{supp}(\mu)$ via a convolution with $\mu$, with an additional power of $t$ to locally normalize mass of $\mu$ when $t \leq 1$. By spreading out $\tilde{w}_t$ over $\cS$, we construct a supersolution which is singular on $\cS$ as $t \downarrow 0$.

We make a few observations about the functions we have introduced. The function $W(z)$ is not globally decreasing for positive $z$, and correspondingly $V$ and $w_t$ are not globally radially decreasing. However, for any $d\geq 1$ and $\alpha \in (0,2)$, $W$ attains its maximum value at some $r_0 \in [0,1)$ and $W(r)$ is decreasing for $r \geq r_0$. Furthermore, one can verify that $\min_{r \in [0,1]} W(r) = W(1)$, and so for all $d\geq 1$ and $\alpha \in (0,2)$, $W$ is weakly decreasing in the sense that
\begin{equation}\label{e_Wweakdec1}
\min_{r' \in [0,r]} W(r') =W(r) \,\, \text{ for all $r \geq 1$.} 
\end{equation} 
$V$ inherits this as a form of weak radial decreasing, i.e.
\begin{equation} \label{e_Wweakdec0}
\min_{x \in B(0,R)} V(x) = W(R) \,\, \text{ for all $R \geq 1$.}
\end{equation} 
Finally, one can show that there is a constant $c_{\ref{e_Wweakdec2}} >0$ such that 
\begin{equation} \label{e_Wweakdec2}
\text{For all } R\geq 0,\,\,\, \sup_{|x| \geq R} V(x) \leq c_{\ref{e_Wweakdec2}} W(R).
\end{equation}

\begin{proof}[Proof of Proposition~\ref{prop_linearenvelope}(a)] First let us consider the time derivative of $w_t(x)$. Expanding directly using \eqref{e_Vdef} and \eqref{e_wtdef}, for $z = t^{-\frac 1 \alpha}|x|$ we have
\begin{align} \label{e_dt1}
\partial_t(w_t(x)) &= -\frac 1 \beta t^{-\frac 1 \beta - 1} W(t^{-\frac 1 \alpha}|x|) + \left(-\frac 1 \beta - \frac s \alpha \right)t^{-\frac 1 \beta - \frac s \alpha - 1} W(t^{-\frac 1 \alpha}|x|)  \nonumber
\\ &\hspace{10 mm} - \frac 1 \alpha t^{-\frac 1 \beta - 1}(1 + t^{-\frac s \alpha}) (t^{\frac 1 \alpha}|x|)W'(t^{-\frac 1 \alpha}|x|) \nonumber
\\ &= t^{-\frac 1 \beta - \frac s \alpha - 1} \left[ \left(-\frac 1 \beta - \frac s \alpha \right)W(z) - \frac 1 \alpha zW'(z) \right] \nonumber
\\ &\hspace{10 mm} +t^{-\frac 1 \beta - 1} \left[-\frac 1 \beta W(z) - \frac 1 \alpha zW'(z) \right].
\end{align}
Computing $W'$ directly, we obtain that 
\begin{align}
W'(z) = \frac{2z}{e+z^2} \frac{1}{1+z^{d+\alpha}} - (d+\alpha) \frac{\log(e+z^2) z^{d+\alpha - 1}}{(1+z^{d+\alpha})^2} \nonumber
\end{align}
From \eqref{e_dt1}, it follows that
\begin{align}
\partial_t(w_t(x)) &= t^{-\frac 1 \beta - \frac s \alpha - 1} W(z) \left[ \left(-\frac 1 \beta - \frac s \alpha\right) -\frac 1 \alpha \frac{2z^2(e+z^2)^{-1}}{\log(e+z^2)} + \frac{d+\alpha}{\alpha} \frac{z^{d+\alpha}}{1+z^{d+\alpha}} \right] \nonumber
\\ &\hspace{10 mm}+ t^{-\frac 1 \beta - 1} W(z) \left[-\frac 1 \beta -\frac 1 \alpha \frac{2z^2(e+z^2)^{-1}}{\log(e+z^2)} + \frac{d+\alpha}{\alpha} \frac{z^{d+\alpha}}{1+z^{d+\alpha}} \right] \nonumber
\\ &= t^{-\frac 1 \beta - \frac s \alpha - 1}W(z)f_1(z) + t^{-\frac 1 \beta - 1}W(z)f_2(z). \label{e_dt11}
\end{align}
Consider $f_1(z)$, the square bracketed quantity in the first line. The second term in $f_1(z)$ vanishes as $z \to \infty$, and the third converges to $\frac{d+\alpha}{\alpha}$. The assumption $\beta > \frac{\alpha}{d-s+\alpha}$ is equivalent to $\frac 1 \beta + \frac s \alpha < \frac{d+\alpha}{\alpha}$; it follows that $f_1(z)$ is positive for sufficiently large $z$. Moreover, there are constants $R_0 > 0$ and $c_2 > 0$ such that the for $z \geq R_0$, $f_1(z) \geq c_2$. Since $f_2(z) > f_1(z)$ for all $z \geq 0$, we also have $f_2(z) \geq c_2$ for $z \geq R_0$, and from \eqref{e_dt11} we have the following: for all $(t,x)$ satisfying $t^{-\frac 1 \alpha}|x| \geq R_0$,
\begin{align} \label{e_dt2}
\partial_t(w_t(x)) \geq c_2 t^{-\frac 1 \beta - 1}(1+t^{-\frac s \alpha}) W(t^{-\frac 1 \alpha}|x|).
\end{align}
Next we consider $\Delta_\alpha w_t$. By the scaling of the $\alpha$-stable process, if $g^\lambda(x) = g(\lambda x)$, then $\Delta_\alpha g^\lambda (x) = \lambda^\alpha (\Delta_\alpha g)(\lambda x)$. Using this and \eqref{e_wtdef}, it follows that 
\begin{equation} \label{e_fracLap0} 
\Delta_\alpha w_t(x) = t^{-\frac 1 \beta -1} (1 + t^{ - \frac s \alpha }) (\Delta_\alpha V)(t^{-\frac 1 \alpha}x). 
\end{equation}
Using the above and \eqref{e_VfracLapbd}, for all $(t,x)$ such that $t^{-\frac 1 \alpha}|x| \geq 2$,
\begin{equation} \label{e_fracLap1}
-\Delta_\alpha w_t(x) \geq - c_1 t^{-\frac 1 \beta - 1}(1+t^{-\frac s \alpha}) \frac{1}{1+|t^{-\frac 1 \alpha} x|^{d+ \alpha}}.
\end{equation}
This bound allows a direct comparison with $\partial_t(w_t(x))$. In particular, by \eqref{e_dt2} and \eqref{e_fracLap1}, (and recalling the definition of $W$ from \eqref{e_Wdef}) we have, for $|t^{-\frac 1 \alpha}x| \geq 2 \vee R_0$,
\begin{equation}
(\partial_t -\Delta_\alpha) w_t(x) \geq \frac{t^{-\frac 1 \beta  - 1}(1+t^{-\frac s \alpha})}{1+|t^{-\frac 1 \alpha} x|^{d+ \alpha}} \left[ c_2 \log(e+|t^{-\frac 1 \alpha}x|^2) - c_1 \right]. \nonumber
\end{equation}
It follows that for some $R_1 \geq R_0 \vee 2$, 
\begin{equation} \label{e_dtfracLapbd1}
(\partial_t -\Delta_\alpha) w_t(x) \geq 0 \,\, \text{ for all $(t,x)$ satisfying $|t^{-\frac 1 \alpha}x| \geq R_1$.}
\end{equation}
Now consider $h_k(t,x)$. We can take differentiation under the integral in \eqref{e_hlambdadef} to obtain
\begin{align} \label{e_dtfracLapbd2}
(\partial_t -\Delta_\alpha)h_k(t,x) = k \int (\partial_t - \Delta_\alpha)w_t(x-y) d\mu(y).
\end{align}
Recall that $d(x,\cS)$ denotes the distance from $x \in \R^d$ to the set $\cS$. If $t^{-\frac 1 \alpha} d(x,\cS) \geq R_1$, from \eqref{e_dtfracLapbd1} the integrand in \eqref{e_dtfracLapbd2} is positive for all $y \in \cS$, i.e. for all $y \in \text{supp}(\mu)$, and hence
\begin{equation} \label{e_dtfracLapbd22}
(\partial_t -\Delta_\alpha) h_k(t,x) \geq 0 \,\, \text{ for all $(t,x)$ satisfying $t^{-\frac 1 \alpha} d(x,\cS) \geq R_1$.} 
\end{equation}
The condition on $(t,x)$ that $t^{-\frac 1 \alpha} d(x,\cS) \geq R_1$ will be important, so we introduce
\[Q^{\geq R_1} =  \{(t,x) \in Q: t^{-\frac 1 \alpha}d(x,\cS) \geq R_1\}.\]
The statement \eqref{e_dtfracLapbd22} then reads that
\begin{equation} \label{e_dtfracLapbd3}
(\partial_t - \Delta_\alpha) h_k(t,x) \geq 0 \,\, \text{ for all $(t,x) \in Q^{\geq R_1}$.} 
\end{equation}
We also introduce
\[Q^{<R_1} = \{(t,x) \in Q: t^{-\frac 1 \alpha}d(x,\cS) < R_1\},\]
and now consider the behaviour of $(\partial_t - \Delta_\alpha) h_k$ on $Q^{< R_1}$. We can apply \eqref{e_dtfracLapbd1} to the integrand in \eqref{e_dtfracLapbd2} to obtain that
\begin{align} \label{e_dtfracLapbd4}
(\partial_t - \Delta_\alpha)h_k(t,x) \geq k \int_{B(0,t^{\frac 1 \alpha} R_1)} (\partial_t - \Delta_\alpha)w_t(y) d\mu(y-x).
\end{align}
By \eqref{e_dt1} and \eqref{e_fracLap0}, for $u = t^{-\frac 1 \alpha}x$ we have
\begin{align}
(\partial_t - \Delta_\alpha)w_t(y) &= t^{-\frac 1 \beta -\frac s \alpha -1} \left[ \left(-\frac 1 \beta - \frac s \alpha \right)W(|u|) - \frac 1 \alpha |u|W'(|u|) - \Delta_\alpha V(u)\right],  \nonumber
\\ &\hspace{4 mm} +t^{-\frac 1 \beta -1} \left[-\frac 1 \beta W(|u|) - \frac 1 \alpha |u|W'(|u|) - \Delta_\alpha V(u)\right]. \nonumber
\end{align}
Since all the terms in above are continuous functions of $u$ and $|u| \leq R_1$ on $\{y : |y| \leq t^{\frac 1 \alpha}R_1\}$, the square-bracketed terms above are bounded on this set. Thus
\begin{equation} 
\sup_{y:|y|\leq t^{\frac 1 \alpha}R_1} |(\partial_t - \Delta_\alpha w_t(y)| \leq c_3 t^{-\frac 1 \beta - 1} [t^{-\frac s \alpha} \vee 1] \nonumber
\end{equation} 
for some $0< c_3 < \infty$. Using this in \eqref{e_dtfracLapbd4} we obtain that
\begin{align} \label{e_smallvalworstcase}
(\partial_t - \Delta_\alpha)h_k(t,x) &\geq - k c_3 t^{-\frac 1 \beta - 1} \left[ \frac{\mu(B(x,t^{\frac 1 \alpha}R_1))}{t^{\frac s \alpha} \wedge 1} \right]
\end{align}
for all $(t,x) \in Q$. We now must show that the non-linear term in \eqref{e_pde}, given by $h_k(t,x)^{1+\beta}$, is sufficiently large on $Q^{<R_1}$ so that $h_k(t,x)$ is a super-solution to \eqref{e_pde} even in the case of the worst-case bound given in \eqref{e_smallvalworstcase}. From \eqref{e_wtdef} and \eqref{e_hlambdadef}, we have
\begin{align}
\frac{1}{k^{1+\beta}}h_k(t,x)^{1+\beta} &= \left[\int w_t(x-y) d\mu(y) \right]^{1+\beta} \nonumber
\\ &= \left[t^{-\frac 1 \beta}(1+ t^{ - \frac s \alpha}) \int V(t^{-\frac 1 \alpha}(x-y)) d\mu(y) \right]^{1+\beta} \nonumber
\\ &\geq  \left[ (t^{-\frac 1 \beta  - \frac s \alpha} \vee t^{-\frac 1 \beta} )\int_{B(x,t^{\frac 1 \alpha}(R_1+1))} V(t^{-\frac 1 \alpha}(x-y)) d\mu(y) \right]^{1+\beta} \nonumber
\end{align}
where the final inequality holds because $V \geq 0$. Restricted to $B(x,t^{\frac 1 \alpha}(R_1+1))$, $V(t^{-\frac 1 \alpha}(x-y))$ is bounded below by $W(R_1 + 1)$ by \eqref{e_Wweakdec0}. This implies that
\begin{align} \label{e_smallvalNL} 
\frac{1}{k^{1+\beta}} h_k(t,x)^{1+\beta} &\geq c_4 t^{-\frac 1 \beta - 1} \left[ \frac{\mu(B(x,t^{\frac 1 \alpha}(R_1 + 1)))}{t^{\frac s \alpha} \wedge 1} \right]^{1+\beta} 
\end{align}
on $Q^{<R_1}$, where $c_4 = W(R+1)^{1+\beta}$. Combining \eqref{e_smallvalworstcase} and \eqref{e_smallvalNL}, we obtain that for $x \in Q^{<R_1}_T$,
\begin{align} \label{e_smallvalshowdown1}
&(\partial_t - \Delta_\alpha) h_k(t,x) + h_k(t,x)^{1+\beta} \nonumber
\\ &\hspace{10 mm} \geq k t^{-\frac 1 \beta - 1} \left( c_4 k^\beta \left[ \frac{\mu(B(x,t^{\frac 1 \alpha}(R_1 + 1)))}{t^{\frac s \alpha} \wedge 1} \right]^{1+\beta}   - c_3 \left[ \frac{\mu(B(x,t^{\frac 1 \alpha}R_1))}{t^{\frac s \alpha} \wedge 1} \right] \right) \nonumber
\\ &\hspace{10 mm} \geq k t^{-\frac 1 \beta - 1} \left[ \frac{\mu(B(x,t^{\frac 1 \alpha}R_1))}{t^{\frac s \alpha} \wedge 1} \right] \left( c_4 k^\beta \left[ \frac{\mu(B(x,t^{\frac 1 \alpha}(R_1 + 1)))}{t^{\frac s \alpha} \wedge 1} \right]^{\beta}   - c_3\right). 
\end{align}
Since $(t,x) \in Q^{<R_1}$, there must be a point $y_0 \in \cS$ such that $B(y_0, t^{\frac 1 \alpha}) \subset B(x,t^{\frac 1 \alpha}(R_1 + 1))$. In particular, since $\mu$ satisfies (F2)-$s$ we have
\begin{align} 
\mu(B(x,t^{\frac 1 \alpha}(R_1 + 1))) \geq \mu(B(y_0,t^{\frac 1 \alpha})) \geq \underline{C} [t^{\frac s \alpha} \wedge 1]. \nonumber
\end{align}
The minimum above appears since for $y_0 \in \cS$ and $t \geq 1$, $\mu(B(y_0, t^{\frac 1 \alpha})) \geq \mu(B(y_0,1)) \geq \underline{C}$. Using this in \eqref{e_smallvalshowdown1}, we obtain 
\begin{align} 
(\partial_t - \Delta_\alpha) h_k(t,x) - h_k(t,x)^{1+\beta} \geq k t^{-\frac 1 \beta - 1} \left[ \frac{\mu(B(x,t^{\frac 1 \alpha}R_1))}{t^{\frac s \alpha} \wedge 1} \right]  \left( c_4' k^\beta - c_3 \right) \nonumber
\end{align}  
for all $(t,x) \in Q^{<R_1}$, where $c_4' = \underline{C}^\beta c_4$. For sufficiently large $k$ the above is non-negative, and hence $h_k(t,x)$ is a super-solution to \eqref{e_pde} on $Q^{<R_1}$. On the other hand, because $h_k(t,x)^{1+\beta} \geq 0$, \eqref{e_dtfracLapbd3} implies that $h_k(t,x)$ is a super-solution on $Q^{\geq R_1}$. Thus we have shown that for sufficiently large $k$, $h_k(t,x)$ is a super-solution to \eqref{e_pde} on $Q^{<R_1} \cup Q^{\geq R_1} = Q$.
\end{proof}

\begin{proof}[Proof of Proposition~\ref{prop_linearenvelope}(b)-(d)]
We first show part (c). For fixed $x$, since $\cS$ is closed, there is a point $y_0 \in \cS$ such that $|x-y_0| = d(x,\cS)$. Hence from \eqref{e_hlambdadef},
\begin{align*}
h_k(t,x) &\geq k \int_{B(y_0,t^{\frac 1 \alpha})} w_t(x-y) d\mu(y)
\\ & = k t^{-\frac 1 \beta}(1+t^{ - \frac s \alpha}) \int_{B(y_0,t^{\frac 1 \alpha})} W(t^{-\frac 1 \alpha}|x-y|) d\mu(y)
\\ &\geq k [t^{-\frac 1 \beta} \vee t^
{-\frac 1 \beta - \frac s \alpha}] W(t^{-\frac 1 \alpha}d(x,\cS) + 1) \mu(B(y_0,t^{\frac 1 \alpha})),
\end{align*}
where the last line has used the triangle inequality and \eqref{e_Wweakdec1}. Since $\mu$ satisfies (F2)-$s$, $\mu(B(y_0,t^{\frac 1 \alpha})) \geq \underline{C}t^{\frac s \alpha}$ for $t \leq 1$ and $\mu(B(y_0,t^{\frac 1 \alpha})) \geq \underline{C}$ for $t > 1$. This implies that above is bounded below by $k \underline{C}t^{-\frac 1 \beta } W(t^{-\frac 1 \alpha}d(x,\cS) + 1)$. The result then follows from \eqref{e_Wweakdec2}.

Next we prove part (b). The claim for $x \in \cS$ follows from part (c). Now let $x \in \R^d$. Applying \eqref{e_Wweakdec2}, we obtain that for every $y \in \cS$,
\[ w_t(x-y) \leq c_{\ref{e_Wweakdec2}} [t^{-\frac 1 \beta - \frac s \alpha} \vee t^{-\frac 1 \beta}] W(t^{-\frac 1 \alpha}d(x,\cS)). \]
One then uses this bound directly in the convolution defining $h_k(t,x)$, i.e. \eqref{e_hlambdadef}, to obtain \eqref{e_prop_linEnv_bd2}. Using formula \eqref{e_Wdef} for $W$ and the fact that $\frac 1 \beta + \frac s \alpha < \frac{d+\alpha}{\alpha}$, the uniform convergence as $t\downarrow 0$ follows.

We now prove (d). Suppose that $\nu \in \cM_F(\cS)$. (This includes the case $\nu = \mu$.) In order to show that $u^\nu(t,x) \leq h_k(t,x)$, we will need to consider a sequence of solutions corresponding to a sequence of approximations of $\nu$. Let $Z>0$ be the normalizing constant such that
\[Z^{-1} \int V(x) dx = 1.\]
Let $\tilde{V}(\cdot) = Z^{-1}V(\cdot)$ and define a sequence of $C^2$ mollifiers by $\phi_n(x) = n^{d} \tilde{V}(nx)$ for $n \geq 1$. Then it is immediate that $\nu * \phi_n \to \nu$ in the weak sense of measures as $n \to \infty$. Let $u_n(t,x) = u^{\nu * \phi_n}(t,x)$. By Lemma~\ref{lemma_stability}(a),
\begin{equation} \label{mollifystab}
\lim_{n \to \infty} u_n(t,x) = u^{\nu}(t,x)
\end{equation}
for all $(t,x) \in Q$. For $x \in \R^d$,
\begin{align} 
\nu * \phi_n(x) &= \frac 1 Z n^d \int W(n|y-x|)d\nu(y) \nonumber
\\ &\leq \frac 1 Z \nu(1) n^d \sup_{y \in \cS} W(n|x-y|) \nonumber
\end{align}
Hence from \eqref{e_Wweakdec2} we obtain that
\begin{equation} \label{comparisonprinc1}
\nu * \phi_n(x) \leq \frac{c_{\ref{e_Wweakdec2}}\nu(1)}{Z}  [n^d W(nd(x,\cS))].
\end{equation}
Let $t_n = n^{-\alpha}$. By \eqref{e_prop_linEnv_bd3}, we have
\begin{align} \label{comparisonprinc2}
h_k(t_n,x) &\geq c_{\ref{e_prop_linEnv_bd3}} k n^{\frac \alpha \beta} W(nd(x,\cS)) \nonumber
\\ &= c_{\ref{e_prop_linEnv_bd3}}k  n^{\frac \alpha \beta - d} [n^d W(nd(x,\cS))]
\end{align}
By part (a), if $k \geq \Lambda_0$, then $(t,x) \to h_{k}(t_n + t,x)$ is a super-solution to \eqref{e_pde} on $Q$ with initial value $h_{k}(t_n,\cdot)$. Since $\frac \alpha \beta > d$, it follows from \eqref{comparisonprinc1} and \eqref{comparisonprinc2} that, for sufficiently large $n$,
\[\nu * \phi_n(x) \leq h_k(t_n,x).\]
Since $u_n$ has initial data $\nu * \phi_n$ and $(t,x) \to h_{k}(t_n + t,x)$ is a super-solution, the above and the comparison principle imply that
\begin{equation}
u_n(t,x) \leq h_k(t_n+t,x) \nonumber
\end{equation}
for all $(t,x) \in Q$. Taking $n \to \infty$ on both sides and using \eqref{mollifystab}, we obtain that
\[u^\nu(t,x) \leq h_k(t,x)\]
for all $(t,x) \in Q$.
\end{proof}

The final ingredient in the proof of Theorem~\ref{thm_nonflat} is the following lower bound on $u^{\infty \mu}_t(x)$. We recall that $u^\infty_t(x) = \lim_{\lambda \to \infty} u^{\lambda \delta_0}_t(x)$.
\begin{lemma} \label{lemma_umuinf_lwrdbd} Let $\mu \in \cM_F(\R^d)$. Then for every $z \in \text{supp}(\mu)$,
\begin{equation}
u^{\infty \mu}_t(x) \geq  u^\infty_t(x-z) \geq c_{\ref{lemma_umuinf_lwrdbd}} t^{-\frac 1 \beta} p_1(t^{-\frac 1 \alpha}(x-z)) \nonumber
\end{equation}
for all $(t,x) \in Q$, where $c_{\ref{lemma_umuinf_lwrdbd}}>0$ depends only on $(\alpha,\beta,d)$.
\end{lemma}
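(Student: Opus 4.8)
The plan is to treat the two inequalities separately. The second one, $u^\infty_t(x-z) \geq c_{\ref{lemma_umuinf_lwrdbd}}\, t^{-\frac 1\beta} p_1(t^{-\frac 1\alpha}(x-z))$, is nothing but \eqref{e_uinf_lwrbound} applied at the point $x-z$, so it holds with $c_{\ref{lemma_umuinf_lwrdbd}} = c_{\ref{lemma_heatkernellowerbd}}$, and no further work is needed there. The content of the lemma is therefore the inequality $u^{\infty\mu}_t(x) \geq u^\infty_t(x-z)$ for every $z \in \text{supp}(\mu)$ and every $(t,x)\in Q$.

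For this I would use the comparison principle (Lemma~\ref{lemma_stability}(b)) together with the stability statement (Lemma~\ref{lemma_stability}(a)), approximating $\delta_z$ by normalized restrictions of $\mu$ to shrinking balls around $z$. Fix $z \in \text{supp}(\mu)$. For $\epsilon > 0$ let $m_\epsilon = \mu(B(z,\epsilon))$, which is strictly positive precisely because $z \in \text{supp}(\mu)$, let $\mu_\epsilon$ denote the restriction of $\mu$ to $B(z,\epsilon)$, and set $\nu_\epsilon = m_\epsilon^{-1}\mu_\epsilon \in \cM_F(\R^d)$, a probability measure supported in $B(z,\epsilon)$. Since $\lambda\mu \geq \lambda\mu_\epsilon = \lambda m_\epsilon\nu_\epsilon$ as measures, the comparison principle gives $u^{\lambda\mu}_t \geq u^{\lambda m_\epsilon\nu_\epsilon}_t$ on $Q$ for every $\lambda>0$.

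Now I would unwind the two limits in the correct order. Fix $\Lambda>0$ and $\epsilon>0$. For every $\lambda \geq \Lambda/m_\epsilon$ we have $\lambda m_\epsilon\nu_\epsilon \geq \Lambda\nu_\epsilon$, so $u^{\lambda\mu}_t \geq u^{\Lambda\nu_\epsilon}_t$ on $Q$; letting $\lambda\to\infty$ gives $u^{\infty\mu}_t(x) \geq u^{\Lambda\nu_\epsilon}_t(x)$ for all $(t,x)\in Q$. Next, as $\epsilon\downarrow 0$ we have $\nu_\epsilon \to \delta_z$ in $\cM_F(\R^d)$, so Lemma~\ref{lemma_stability}(a) yields $u^{\Lambda\nu_\epsilon}_t(x)\to u^{\Lambda\delta_z}_t(x)$, and by translation invariance of \eqref{e_pde_measureproblem} together with uniqueness (Proposition~\ref{prop_weaksolexist}) we have $u^{\Lambda\delta_z}_t(x) = u^\Lambda_t(x-z)$. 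Hence $u^{\infty\mu}_t(x) \geq u^\Lambda_t(x-z)$ for every $\Lambda>0$, and letting $\Lambda\to\infty$ gives $u^{\infty\mu}_t(x)\geq u^\infty_t(x-z)$. Combined with \eqref{e_uinf_lwrbound}, this is the asserted chain of inequalities.

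I do not expect a real obstacle here. The only point that requires care is the order of the limits $\lambda\to\infty$ and $\epsilon\downarrow 0$: one must first pass to $u^{\infty\mu}_t$ and only then send $\epsilon\downarrow 0$, which is exactly why the auxiliary parameter $\Lambda$ is introduced and why one should not try to apply Lemma~\ref{lemma_stability}(a) directly to $u^{\infty\nu_\epsilon}_t$. The other ingredients — that restricting $\mu$ to a ball about a point of its support leaves a nonzero measure, and the translation covariance $u^{\Lambda\delta_z}_t(x)=u^\Lambda_t(x-z)$ — are elementary.
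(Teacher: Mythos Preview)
Your proof is correct and is in fact more direct than the paper's argument. The paper proceeds differently: it first establishes that $\int_{B(z,\rho)} u^{\infty\mu}_t(x)\,dx \to \infty$ as $t\downarrow 0$ for every $\rho>0$, then for fixed $\lambda>0$ chooses $(t_n,\rho_n)\to (0,0)$ with $\int_{B(z,\rho_n)} u^{\infty\mu}_{t_n}\,dx = \lambda$, sets $\phi_n = 1_{B(z,\rho_n)} u^{\infty\mu}_{t_n}$, and uses the Markov property (semigroup comparison at time $t_n$) to get $u^{\phi_n}_t \leq u^{\infty\mu}_{t+t_n}$; taking $n\to\infty$ via stability and then $\lambda\to\infty$ finishes. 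Your approach bypasses all of this by working directly at the level of initial measures: restrict $\mu$ to $B(z,\epsilon)$, normalize, and apply the comparison principle plus Lemma~\ref{lemma_stability}(a). This is cleaner for the present lemma. The payoff of the paper's route is that it transfers verbatim to Proposition~\ref{prop_pointlowerbound}, where one starts from an arbitrary solution $u$ with a given singular set rather than from $u^{\infty\mu}$; in that setting there is no underlying measure to restrict, so your argument would not apply, while the paper's ``extract mass $\lambda$ from the singularity and run the comparison from time $t_n$'' technique still works.
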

We defer the proof to the end of the section.

\begin{proof}[Proof of Theorem~\ref{thm_nonflat}] Fix $s \in [0,\alpha)$ and let $\mu$ satisfy (F2)-$s$ and have compact support $\cS$. Let $\beta^*(\alpha,s) < \beta < \frac \alpha d$. We begin with part (b). We need to show that the upper bound in \eqref{e_thmnonflat_upperbound} holds, i.e. for some constant $C>0$,
\begin{equation} 
u^{\infty \mu}_t(x) \leq C [t^{-\frac 1 \beta - \frac s \alpha} \vee t^{-\frac 1 \beta}] W(t^{-\frac 1 \alpha}d(x,\cS)) \nonumber
\end{equation}
on $Q$. Let $\Lambda_0>0$ be as in Proposition~\ref{prop_linearenvelope}(a). Then by Proposition~\ref{prop_linearenvelope}(d), $u^{\lambda \mu}_t(x) \leq h_{\Lambda_0}(t,x)$ for every $\lambda > 0$, and hence $u^{\infty \mu}_t(x) = \lim_{\lambda \to \infty} u^{\lambda \mu}_t(x) \leq h_{\Lambda_0}(t,x)$. The above bound then follows from Proposition~\ref{prop_linearenvelope}(b) with $C = \mu(1) \Lambda_0$, which proves the upper bound in \eqref{e_thmnonflat_upperbound}. The lower bound in \eqref{e_thmnonflat_upperbound} follows from Lemma~\ref{lemma_umuinf_lwrdbd}.

Next we prove part (a). From the upper bound in \eqref{e_thmnonflat_upperbound}, it is clear that for $t>0$, $u^{\infty \mu}_t(x) \to 0$ as $d(x,\cS) \to \infty$. We can thus take $x_0 \in \R^d$ so that $u^{\infty \mu}_t(x_0) < U_t$, which is equivalent to \linebreak$\N_{x_0}(\mu(X_t) = 0 \, | \, X_t \neq 0 ) > 0$. By \eqref{clustercanon}, we have $P^X_{\delta_{x_0}}(\mu(X_t) = 0 \, | \, X_t \neq 0 ) >0$. Appealing to \eqref{mutual_absolute_continuity}, it follows that \[P^X_{X_0}(\mu(X_t) = 0 \, | \, X_t \neq 0 ) >0\] for every $X_0 \in \cM_F(\R^d)$. In particular, we can take $X_0 = \delta_x$ and use \eqref{clustercanon} again to see that $\N_x(\mu(X_t) = 0 \, | \, X_t \neq 0)>0$ for every $x \in \R^d$. This completes the proof of (a). 

To see that part (d) holds, recall that Proposition~\ref{prop_linearenvelope}(d) applies to any $\nu \in \cM_F(\cS)$. The upper bound in \eqref{e_thmnonflat_upperbound_nu} then follows by the same argument used to prove the upper bound in \eqref{e_thmnonflat_upperbound} above. The argument used to prove (a) can then be used to prove that the claims from (a) hold when $\nu$ is replaced with $\mu$. Finally, as in the proof of part (b), the lower bound in \eqref{e_thmnonflat_upperbound_nu} follows from Lemma~\ref{lemma_umuinf_lwrdbd}.

It remains to show part (c). Recall from \eqref{e_scaletinf} that
\begin{equation}
u_t^{\infty \mu}(x) = t^{-\frac 1 \beta}u^{\infty \mu(\cdot / t^{-\frac 1 \alpha})}_1(t^{-\frac 1 \alpha} x).  \nonumber
\end{equation}
If $\mu$ satisfies (F1)-$s$, for $t \leq 1$ we apply Lemma~\ref{lemma_heatlwrbd_Frost} to the right hand side of the above to obtain 
\begin{align} 
u_t^{\infty \mu}(x) &\geq c_{\ref{lemma_heatlwrbd_Frost}} t^{-\frac 1 \beta - \frac s \alpha} \int p_1(y-t^{-\frac 1 \alpha}x) d\mu \big( y /t^{-\frac 1 \alpha} \big) \nonumber
\\ &= c_{\ref{lemma_heatlwrbd_Frost}} t^{-\frac 1 \beta - \frac s \alpha} \int p_1(t^{-\frac 1 \alpha}(y-x)) d\mu(y). \nonumber
\end{align}
For every $y \in \cS$, $|y-x| \leq d(x,\cS) + \text{diam}(\cS)$, so from the above, for $x \in \R^d$ and $t \leq 1$ we have
\begin{equation}
u_t^{\infty \mu}(x)  \geq c_{\ref{lemma_heatlwrbd_Frost}} \mu(1) t^{-\frac 1 \beta - \frac s \alpha}p_1(t^{-\frac 1 \alpha}(d(x,\cS) + \text{diam}(\cS))). \nonumber
\end{equation}
The lower bound on $p_1$ from \eqref{heatkernelbds} then implies \eqref{e_thmnonflat_lowerbound}. \end{proof}

\begin{proof}[Proof of Lemma~\ref{lemma_umuinf_lwrdbd}] Let $\mu \in \cM_F(\R^d)$ and $z \in \text{supp}(\mu)$. Then $\mu(B(z,\rho)) > 0$ for every $\rho > 0$. For $k >0$ and $\rho >0$, we have
\[ \liminf_{t \to 0} \int_{B(z,\rho)} u^{k \mu}_t(x)\, dx \geq k \mu(B(z,\rho /2)).\]
Since $u^{\infty \mu}_t \geq u^{k \mu}_t$ for all $k >0$, it follows that 
\begin{equation} \label{u_muinf_sing}
\lim_{t \to 0} \int_{B(z,\rho)} u^{\infty \mu}_t(x)\, dx = + \infty \,\,\, \text{  for all $z \in \text{supp}(\mu)$ and $\rho > 0$.}
\end{equation}
We now fix $\lambda > 0$ and $z \in \text{supp}(\mu)$. By \eqref{u_muinf_sing}, there exists $(t_n,\rho_n)$ such that $t_n, \rho_n \to 0$ and
\begin{equation}
\int_{B(z,\rho_n)} u^{\infty \mu}_{t_n}(x)\,dx = \lambda \nonumber
\end{equation}
for all $n \geq 1$. Let $\phi_n(x) = u^{\infty \mu}_{t_n}(x) 1_{B(z,\rho_n)}(x)$. Note that $\phi_n \to \lambda \delta_0(\cdot - z)$ in the weak sense of measures as $n \to \infty$, so by Lemma~\ref{lemma_stability}(a), $ \lim_{n \to \infty} u^{\phi_n}_t(x) = u^{\lambda}_t(x-z)$. By \eqref{e_Lapcanon}, we have
\begin{equation} \label{muinflwr1}
u^{\phi_n}_t(x) = \N_x( 1 - \exp( X_t(\phi_n))).
\end{equation}
On the other hand, we remark that for every $k >0$, by \eqref{e_Lapcanonmeasure2} and the Markov property,
\begin{align} 
u^{k \mu}_{t+t_n}(x) &= \N_x(1 - \exp (k \mu(X_{t+t_n})) \nonumber
\\ &= \N_x(1 - \exp(X_t(u^{k \mu}_{t_n})). \nonumber
\end{align}
Taking $k \to \infty$, we obtain
\begin{equation} \label{muinflwr2}
u^{\infty \mu}_{t+t_n}(x) = \N_x(1 - \exp(X_t(u^{\infty \mu}_{t_n})).
\end{equation}
By the definition of $\phi_n$, we have $\phi_n \leq u^{\infty \mu}_{t_n}$ for all $n \in \N$. Hence, from \eqref{muinflwr1} and \eqref{muinflwr2}, we have
\[ u^{\phi_n}_t(x) \leq u^{\infty \mu}_{t + t_n}(x)\]
for all $(t,x) \in Q$ and all $n \in \N$. As noted above, the left hand side converges to $u_t^\lambda(x-z)$ as $n \to \infty$. Taking $n \to \infty$, we obtain that
\[ u^\lambda_t(x-z) \leq u^{\infty \mu}_t(x).\]
Taking $\lambda \to \infty$, we obtain $u^\infty_t(x-z) \leq u^{\infty \mu}_t(x)$. This proves the first inequality in the lemma, and the second then follows from \eqref{e_uinf_lwrbound}. \end{proof}


\section{The initial trace problem} \label{s_trace}
We now apply the results of Sections \ref{s_flat} and \ref{s_nonflat} to the initial trace theory for \eqref{e_pde}. Recall that the initial trace of a solution $u_t(x)$ to \eqref{e_pde} was defined in \eqref{e_inittrace_measure} and \eqref{e_inittrace_singular}. We restate the definition here for convenience. A pair $(\cS,\nu)$ with $\cS \subset \R^d$ a closed set and $\nu$ a Radon measure with $\nu(\cS) = 0$ is the \textit{initial trace} of $u_t(x)$ if:

\begin{itemize} 
\item For all $\xi \in  C_c(\cS^c)$.	
\begin{equation} \label{e_inittrace_measure2}
\lim_{t \to 0} \int \xi(x) u_t(x) dx = \int \xi d\nu.
\end{equation}
\item For every $z \in \cS$ and $\rho > 0$,
\begin{equation} \label{e_inittrace_singular2}
\lim_{t \to 0} \int_{B(z,\rho)} u_t(x) dx = + \infty.
\end{equation}
\end{itemize}
Our contribution is to the problem of determining when a solution with a given initial trace exists. We consider weak solutions. First, recall that we have defined a weak solution to \eqref{e_pde} in Definition~\ref{def_weaksol}. Although we only consider the problem for initial traces in which the regular component (i.e. the Radon measure) is null, our definition applies for general initial traces.

\begin{definition} \label{def_weaksoltrace}
For closed $\cS \subset \R^d$ and a Radon measure $\nu$ satisfying $\nu(\cS) = 0$, we say that $u:Q \to [0,\infty)$ is a weak solution to the initial trace problem with initial trace $(\cS,\nu)$ if:
\begin{itemize}
\item $u$ is a weak solution to \eqref{e_pde} in the sense of Definition~\ref{def_weaksol}.
\item The initial trace $(\cS,\nu)$ is attained in the sense that \eqref{e_inittrace_measure2} and \eqref{e_inittrace_singular2} hold.
\end{itemize}
\end{definition}

Our main result about the initial trace problem is Theorem~\ref{thm_inittrace}, which has two parts: non-existence and existence. Most of the work has already been carried out in Sections~\ref{s_flat} and \ref{s_nonflat}. Both proofs require the following lemma.

\begin{lemma} \label{lemma_limsol} For $\mu \in \cM_F(\R^d)$, $u^{\infty \mu}_t(x) = \lim_{\lambda \to \infty} u^{\lambda \mu}_t(x)$ is a weak solution to \eqref{e_pde}. Furthermore, for all $\rho > 0$ and $z \in \text{supp}(\mu)$,
\begin{equation} \label{e_lemma_limsol_sing}
\lim_{t \to 0} \int_{B(z,\rho)} u^{\infty \mu}_t(x) \, dx = +\infty,
\end{equation}
and hence the singular set of $u_t$ contains $\text{supp}(\mu)$.
\end{lemma}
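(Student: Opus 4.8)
The plan is to verify the two assertions separately. For the first — that $u^{\infty\mu}_t(x)$ is a weak solution to \eqref{e_pde} — I would argue by stability under increasing limits. Each $u^{\lambda\mu}$ is the weak solution to \eqref{e_pde_measureproblem} with data $\lambda\mu$ (Proposition~\ref{prop_weaksolexist}), hence satisfies the integration-by-parts identity \eqref{e_weaksolibp_compact} against every $\xi \in C^{1,2}_c(Q)$. By the comparison principle (Remark~\ref{remark_comp_prin}, or Lemma~\ref{lemma_stability}(b)) the family $\lambda \mapsto u^{\lambda\mu}_t(x)$ is nondecreasing, and by \eqref{e_uinf_bd} it is bounded above by $U_t$, so the monotone limit $u^{\infty\mu}_t(x)$ exists and is finite everywhere on $Q$, with $u^{\infty\mu}_t \le U_t$. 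On any compact $K \subset Q$ the function $U_t$ is bounded, so $u^{\lambda\mu} \to u^{\infty\mu}$ in $L^{1}_{\mathrm{loc}}$ and in $L^{1+\beta}_{\mathrm{loc}}$ by dominated convergence; passing to the limit in \eqref{e_weaksolibp_compact} gives the identity for $u^{\infty\mu}$. For continuity of $(t,x)\mapsto u^{\infty\mu}_t(x)$ — which Definition~\ref{def_weaksol} requires — I would invoke the probabilistic representation: by Lemma~\ref{lemma_measure_duality}, $u^{\lambda\mu}_t(x) = \N_x(1-\exp(-\lambda\mu(X_t)))$, so $u^{\infty\mu}_t(x) = \N_x(\mu(X_t)>0)$ (cf.\ \eqref{e_uinf_canonrep}); alternatively one can appeal to interior parabolic regularity for the fractional heat operator applied to the locally bounded weak solution. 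I would lean on whichever of these the paper has set up most cleanly; the probabilistic identity \eqref{e_uinf_canonrep} together with $L^{1+\beta}_{\mathrm{loc}}$-membership and the integral equation \eqref{e_integ_measure} (Lemma~\ref{lemma_integ_pde}) is the most self-contained route, since $u^{\infty\mu}$ inherits the mild form $u^{\infty\mu}_t = S_{t-t_0}(u^{\infty\mu}_{t_0}) - \int_{t_0}^t S_{t-r}((u^{\infty\mu}_r)^{1+\beta})\,dr$ for $t>t_0>0$ by monotone passage to the limit, and joint continuity follows from standard properties of $S_t$ and local boundedness.

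For the second assertion — the blow-up \eqref{e_lemma_limsol_sing} — the argument is elementary. Fix $z \in \mathrm{supp}(\mu)$ and $\rho>0$; then $\mu(B(z,\rho/2)) > 0$ since $z$ is in the support. For each fixed $k>0$, the weak solution $u^{k\mu}$ satisfies $u^{k\mu}_t \to k\mu$ weakly as $t\downarrow 0$ (Definition~\ref{def_weaksol_IVP}), so testing against a continuous function supported in $B(z,\rho)$ that equals $1$ on $B(z,\rho/2)$ and lies in $[0,1]$ yields $\liminf_{t\downarrow 0}\int_{B(z,\rho)} u^{k\mu}_t(x)\,dx \ge k\,\mu(B(z,\rho/2))$. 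By the comparison principle $u^{\infty\mu}_t \ge u^{k\mu}_t$ pointwise, hence $\liminf_{t\downarrow 0}\int_{B(z,\rho)} u^{\infty\mu}_t(x)\,dx \ge k\,\mu(B(z,\rho/2))$ for every $k>0$; letting $k\to\infty$ gives the limit $+\infty$. (This is exactly the computation already performed in the proof of Lemma~\ref{lemma_umuinf_lwrdbd}, so I would either reference it or repeat the two lines.) That $\mathrm{supp}(\mu)$ is contained in the singular set of $u^{\infty\mu}$ is then immediate from the defining property \eqref{e_inittrace_singular2}.

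The only genuinely delicate point is the continuity/regularity claim needed to call $u^{\infty\mu}$ a weak solution in the precise sense of Definition~\ref{def_weaksol}: the monotone limit of continuous functions need not be continuous a priori. I expect this to be the main obstacle, and I would resolve it via the mild formulation on $(t_0,\infty)\times\R^d$ for each $t_0>0$ — since $u^{\infty\mu}_{t_0} \in \cB^+_b$ (it is bounded by $U_{t_0}$) and $u^{\infty\mu}$ solves \eqref{e_integ_measure} with initial datum $u^{\infty\mu}_{t_0}$ by passing to the increasing limit in the mild equation for $u^{k\mu}$, joint continuity on $(t_0,\infty)\times\R^d$ follows from continuity of $(t,x)\mapsto S_t\phi(x)$ for bounded $\phi$ and a routine estimate on the Duhamel term using \eqref{heatkernelbds2} and $u^{\infty\mu}\le U$. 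Since $t_0>0$ is arbitrary, $u^{\infty\mu}$ is continuous on all of $Q$, and the $L^{1+\beta}_{\mathrm{loc}}(Q)$ bound is clear from $u^{\infty\mu}\le U$ and local integrability of $t^{-1/\beta}$ near $t=0$ (using $\beta<\alpha/d<1$ when $d\ge 1$, or more simply that local integrability in $Q$ excludes a neighbourhood of $t=0$). Everything else is bookkeeping.
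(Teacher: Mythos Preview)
Your proposal is correct and follows essentially the same approach as the paper: dominated convergence against compactly supported test functions (using $u^{\lambda\mu}\le U_t$) to pass the weak formulation to the limit, continuity via the integral equation on $(t_0,\infty)\times\R^d$ with bounded initial data $u^{\infty\mu}_{t_0}$ (the paper phrases this through Remark~\ref{remark_integbounded} rather than passing to the limit directly in the mild equation, but the content is the same), and the blow-up argument by comparison with $u^{k\mu}$ exactly as in \eqref{u_muinf_sing}. One minor slip: your remark that $\beta<\alpha/d<1$ need not hold (e.g.\ $d=1$, $\alpha>1$), but your ``or more simply'' observation that compact subsets of $Q$ are bounded away from $t=0$ is the correct justification and is all that is needed.
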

\begin{proof}
Let $\mu \in \cM_F(\R^d)$. We must show that $u^{\infty \mu}(t,x)$ satisfies \eqref{e_weaksolibp_compact} for every $\xi \in C_c^{1,2}(Q)$. For $\lambda > 0$, $u^{\lambda \mu}(t,x)$ is a solution to the problem \eqref{e_pde_measureproblem} with $u_0 = \lambda \mu$, so by definition we have
\begin{align} 
&\int_{Q} (u^{\lambda \mu}(t,x)[-\partial_t \xi(t,x) - \Delta_\alpha \xi(t,x)]) + u^{\lambda \mu}(t,x)^{1+\beta} \xi(t,x) \,dxdt = 0. \nonumber
\end{align}
Since $\xi$ has compact support, by \eqref{e_uinf_bd} the bound $u^{\lambda \mu}(t,x) \leq u^{\infty \mu}(t,x) \leq U_t$ allows us to apply Dominated Convergence and conclude that $u^{\infty \mu}(t,x)$ satisfies \eqref{e_weaksolibp_compact} for all $\xi \in C^{1,2}_c(Q)$. Similarly, $u^{\infty \mu}(t,x) \leq U_t$ implies that $u^{\infty \mu}(t,x)$ is bounded on $(\epsilon,\infty) \times \R^d$ for all $\epsilon > 0$ and hence $u^{\infty \mu} \in L^{1+\beta}_{\text{loc}}(Q)$. To see the $u^{\infty \mu}_t(x)$ is continuous, we note that for any $t_0 >0$, $(t,x) \to u^{\infty \mu}(t_0 + t,x)$ is a weak solution to \eqref{e_pde} which is globally bounded by $U_{t_0}$. In particular, (recall Remark~\ref{remark_integbounded}) it is a solution to the integral equation \eqref{e_evol} with initial data $u_{t_0}^{\infty \mu} \in \cB^+_b$ and hence is continuous. It follows that $u^{\infty \mu}_t(x)$ is continuous on $[t_0,\infty) \times \R^d$ for all $t_0>0$ and hence on $Q$. Thus $u^{\infty \mu}_t(x)$ is a weak solution to \eqref{e_pde}.

The fact that \eqref{e_lemma_limsol_sing} holds has already been shown in the proof of Lemma~\ref{lemma_umuinf_lwrdbd}; see \eqref{u_muinf_sing}.
\end{proof}

Our main result concerning flatness and non-existence is the following. We recall from \eqref{def_calU} the space $\cU$ of positive functions on $Q$ bounded above by $U_t$, and that we have restricted our attention to solutions in $\cU$. 

\begin{theorem} \label{thm_nonexistence}
Suppose that $\cS \subset \R^d$ is closed and supports a measure $\mu \in \cM_F(\R^d)$ for which $\lim_{\lambda \to \infty}u^{\lambda \mu}_t = U_t$. If $\cS$ is contained in the singular set of a solution $u$ to \eqref{e_pde} in $\, \cU$, then $u_t = U_t$. In particular, if $\cS \neq \R^d$ there is no solution to \eqref{e_pde} in $\, \cU$ with singular set $\cS$.
\end{theorem}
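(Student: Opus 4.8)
\textbf{Proof proposal for Theorem~\ref{thm_nonexistence}.}
The goal is $u_t = U_t$ on $Q$, and since $u \in \cU$ already gives $u_t \le U_t$, it suffices to prove the reverse inequality. By hypothesis $U_t = \lim_{\lambda\to\infty}u^{\lambda\mu}_t = \sup_{\lambda>0}u^{\lambda\mu}_t$, so it is enough to establish $u^{\lambda\mu}_t(x)\le u_t(x)$ for every $\lambda>0$ and $(t,x)\in Q$. First I would reduce to the case that $\mathcal S_0 := \text{supp}(\mu)$ is compact: writing $\mu_R$ for the restriction of $\mu$ to $B_R$, the set $\text{supp}(\mu_R)\subseteq\text{supp}(\mu)$ is still contained in the singular set of $u$, and $u^{\lambda\mu_R}_t$ is increasing in both $\lambda$ and $R$; so once we know $u^{\lambda\mu_R}_t\le u_t$ for all $\lambda,R$, taking iterated suprema and using $\mu_R\to\mu$ in $\cM_F(\R^d)$ together with the stability Lemma~\ref{lemma_stability}(a) yields $u^{\infty\mu}_t = \sup_{\lambda,R}u^{\lambda\mu_R}_t \le u_t$. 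Hence assume from now on that $\mathcal S_0$ is compact.

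The mechanism that converts the singular behaviour of $u$ near $\mathcal S_0$ into a comparison argument is the following. For any $\tau>0$, the time shift $(t,x)\mapsto u(\tau+t,x)$ is again a weak solution of \eqref{e_pde}, is bounded (by $U_\tau$, since $u\in\cU$), and converges as $t\downarrow 0$ to $u_\tau\in\cB^+_b$ by continuity of $u$, which is part of Definition~\ref{def_weaksol}; hence by Remark~\ref{remark_integbounded} we have $u(\tau+t,x)=u^{u_\tau}_t(x)$. Consequently, if $\phi\in\cB^+_b$ satisfies $\phi\le u_\tau$, the comparison principle (Remark~\ref{remark_comp_prin}) gives $u^\phi_t(x)\le u(\tau+t,x)$ on $Q$. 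The plan is therefore: fix $\lambda>0$, produce $\tau_m\downarrow 0$ and $\phi_m\in\cB^+_b$ with $\phi_m\le u_{\tau_m}$ and $\phi_m\,dx\to\lambda\mu$ in $\cM_F(\R^d)$; then $u^{\phi_m}_t(x)\le u(\tau_m+t,x)$, and letting $m\to\infty$ the left side tends to $u^{\lambda\mu}_t(x)$ by Lemma~\ref{lemma_stability}(a) while the right side tends to $u_t(x)$ by continuity of $u$, giving $u^{\lambda\mu}_t\le u_t$.

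The construction of $\phi_m$ is the heart of the matter and adapts the one-point construction in the proof of Lemma~\ref{lemma_umuinf_lwrdbd} to a compactly supported measure. Fix $\lambda>0$ and $m\in\N$, partition $\R^d$ into half-open cubes $\{Q_i\}$ of side $1/m$, and set $I_m=\{i:\mu(Q_i)>0\}$, which is finite since $\mathcal S_0$ is compact. For $i\in I_m$ pick $z_i\in\overline{Q_i}\cap\mathcal S_0$; then $B(z_i,1/2m)$ is contained in the concentric doubled cube $2Q_i$ (of side $2/m$), and since $z_i$ lies in the singular set of $u$, \eqref{e_inittrace_singular2} gives $\int_{2Q_i}u_\tau(x)\,dx\ge\int_{B(z_i,1/2m)}u_\tau(x)\,dx\to\infty$ as $\tau\downarrow 0$. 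As $I_m$ is finite, choose $\tau_m\in(0,1/m)$ small enough that $\int_{2Q_i}u_{\tau_m}(x)\,dx>\lambda\mu(Q_i)+\lambda\mu(\R^d)$ for every $i\in I_m$. Enumerating $I_m$ and processing the cubes in turn, greedily select pairwise disjoint measurable $A_i\subseteq 2Q_i$ with $\int_{A_i}u_{\tau_m}(x)\,dx=\lambda\mu(Q_i)$, at each stage only avoiding the previously chosen sets; this is feasible because the doubled cubes have overlap multiplicity at most $2^d$ and the mass already removed from $2Q_i$ is at most $\lambda\mu(\R^d)$ (the $Q_j$ being disjoint). Put $\phi_m=\sum_{i\in I_m}u_{\tau_m}1_{A_i}$: then $\phi_m\le u_{\tau_m}$ by disjointness, $\phi_m$ is bounded (by $U_{\tau_m}$) and compactly supported so $\phi_m\in\cB^+_b$, $\phi_m\,dx$ has total mass $\lambda\mu(\R^d)$, and since each $A_i$ lies within sup-distance $1/m$ of $Q_i$ one checks $\phi_m\,dx\to\lambda\mu$ weakly as $m\to\infty$. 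This supplies the hypotheses of the previous paragraph, and combining with the reduction we get $u^{\infty\mu}_t\le u_t$, hence $u_t=U_t$ on $Q$.

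For the final assertion: if $\mathcal S\neq\R^d$ and $u\in\cU$ solved \eqref{e_pde} with singular set $\mathcal S$, then, since $\text{supp}(\mu)\subseteq\mathcal S$ is contained in the singular set, the first part forces $u_t=U_t$; but $\int_{B(z,\rho)}U_t\,dx=|B(z,\rho)|(\beta t)^{-1/\beta}\to\infty$ as $t\downarrow 0$ for \emph{every} $z\in\R^d$, so the singular set of $U_t$ is all of $\R^d\neq\mathcal S$, a contradiction. I expect the main obstacle to be precisely the construction of $\phi_m$ — keeping the extracted pieces $A_i$ simultaneously localized (so $\phi_m\,dx\to\lambda\mu$) and disjoint (so $\phi_m\le u_{\tau_m}$), with the choice of $\tau_m$ uniform over the relevant cubes; the compact-support reduction at the outset is what makes that uniformity routine.
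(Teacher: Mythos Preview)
Your proof is correct but takes a genuinely different route from the paper's. The paper argues as follows: by Proposition~\ref{prop_pointlowerbound} (itself built on the heat-kernel lower bound of Lemma~\ref{lemma_heatkernellowerbd}), one has $u_t(x)\ge c\,t^{-1/\beta}p_1(t^{-1/\alpha}d(x,\cS))$; on the other hand, \eqref{e_heatcomparison} gives $u^{\lambda\mu}_t(x)\le \lambda\mu(1)\,t^{-d/\alpha}p_1(t^{-1/\alpha}d(x,\cS))$. Since $\beta<\alpha/d$, the exponent $-1/\beta$ beats $-d/\alpha$ as $t\downarrow 0$, so for each $\lambda$ there is $t_0(\lambda)$ with $u^{\lambda\mu}_{t_0}\le u_{t_0}$ pointwise; comparison from time $t_0$ and then $\lambda\to\infty$ finishes. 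Your approach instead bypasses Proposition~\ref{prop_pointlowerbound} entirely: you use only the raw definition of the singular set to manufacture bounded data $\phi_m\le u_{\tau_m}$ with $\phi_m\,dx\to\lambda\mu$, via a cube decomposition and a greedy disjoint selection (the bound on removed mass by $\lambda\mu(\R^d)$ is the clean step that makes this work without tracking overlap multiplicities). The paper's route is shorter and makes the role of the standing assumption $\beta<\alpha/d$ explicit; your route is more elementary in that it needs no quantitative heat-kernel estimates and no pointwise lower bound on $u$, only stability (Lemma~\ref{lemma_stability}) and comparison (Remark~\ref{remark_comp_prin}). Both yield the same conclusion, and your compact-support reduction at the outset is a nice touch that the paper does not need because its pointwise bounds are already uniform in $x$.
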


We obtain Theorem~\ref{thm_flatFrost}(b) and Theorem~\ref{thm_inittrace}(a) as corollaries as follows.

\begin{proof}[Proof of Theorem~\ref{thm_flatFrost}(b)]
Let $s \in [0,d]$, $\beta \leq \beta^*(\alpha,s)$ and suppose that $\mu\in \cM_F(\R^d)$ satisfies (F1)-$s$. Then $u^{\infty \mu}_t(x) = U_t$ by Theorem~\ref{thm_flatFrost}(a). Hence $\cS = \text{supp}(\mu)$ supports a measure $\mu$ for which $\lim_{\lambda \to \infty} u^{\lambda \mu}_t(x) = U_t$. Now let $\nu \in \cM_F(\R^d)$ be any measure such that $\text{supp}(\mu) \subseteq \text{supp}(\nu)$. By Lemma~\ref{lemma_limsol}, $u^{\infty \nu}_t$ is a weak solution to \eqref{e_pde} whose singular set contains $\text{supp}(\nu)$, which itself contains $\cS$. The result follows from Theorem~\ref{thm_nonexistence}.
\end{proof}

\begin{proof}[Proof of Theorem~\ref{thm_inittrace}(a)] If $\cS \subset \R^d$ is closed with $\cH^{d_{\text{sat}}} (\cS) > 0$, then by Frostman's Lemma there exists $\mu \in \cM_F(\cS)$ which satisfies (F1)-$d_{\text{sat}}$. By Theorem~\ref{thm_flatFrost}(a), $u^{\infty \mu}_t = U_t$, and the result follows from Theorem~\ref{thm_nonexistence}. \end{proof}

The proof of Theorem~\ref{thm_nonexistence} requires the following pointwise estimate for solutions with a given singular set. A more general version of this result proved for classical solutions in \cite{CV2019}, where it was called Theorem C. We include a short proof for the sake of completeness. The argument is essentially the same as the argument in the proof of Lemma~\ref{lemma_umuinf_lwrdbd}. Recall that $u^\infty_t(x) = \lim_{\lambda \to \infty} u^{\lambda \delta_0}_t(x)$.
\begin{proposition}  \label{prop_pointlowerbound}
Suppose that $u(t,x)$ is a weak solution to \eqref{e_pde} in $\,\cU$ whose singular set contains $\cS \subset \R^d$. Then for every $z \in \cS$, 
\[u(t,x) \geq u^\infty(t,x-z) \geq c_{\ref{prop_pointlowerbound}} t^{-\frac 1 \beta}p_1(t^{-\frac 1 \alpha}(x-z))\]
for all $(t,x) \in Q$, where $c_{\ref{prop_pointlowerbound}} > 0$ depends only on $(\alpha,\beta,d)$.
\end{proposition}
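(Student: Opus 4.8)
The plan is to follow the proof of Lemma~\ref{lemma_umuinf_lwrdbd} almost verbatim, with the divergence \eqref{e_inittrace_singular2} of the local integrals of $u$ on $\cS$ playing the role that the corresponding property of $u^{\infty\mu}_t$ played there. First I would fix $z \in \cS$ and $\lambda > 0$ and construct approximating measures that concentrate at $z$: since $z$ lies in the singular set of $u$, $\lim_{t\downarrow 0}\int_{B(z,1/m)} u_t(x)\,dx = +\infty$ for each $m$, so I can pick $t_n \downarrow 0$ fast enough that $\int_{B(z,1/n)}u_{t_n}(x)\,dx > \lambda$, and then, using that $\rho \mapsto \int_{B(z,\rho)}u_{t_n}(x)\,dx$ is continuous and vanishes at $\rho = 0$, choose $\rho_n \in (0,1/n)$ with $\int_{B(z,\rho_n)}u_{t_n}(x)\,dx = \lambda$. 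Setting $\phi_n := u_{t_n}\,1_{B(z,\rho_n)}$, the hypothesis $u \in \cU$ makes this a bounded measurable function, $\phi_n\,dx$ is a finite measure of total mass $\lambda$ supported in $B(z,\rho_n)$, and $\phi_n\,dx \to \lambda\,\delta_z$ weakly because $\rho_n \to 0$.

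Next I would run the comparison argument. For each $n$ the time-shift $v^{(n)}(t,x) := u(t_n+t,x)$ is a weak solution of \eqref{e_pde} on $Q$, bounded (by $U_{t_n}$, since $U_t$ is decreasing in $t$ and $u \in \cU$), and converging to $u_{t_n} \in \cB^+_b$ as $t \downarrow 0$ by continuity of $u$; hence $v^{(n)} = u^{u_{t_n}}$ by Remark~\ref{remark_integbounded}. Since $\phi_n \le u_{t_n}$ pointwise, the comparison principle for bounded weak solutions with $\cB^+_b$ initial data (Remark~\ref{remark_comp_prin}) gives $u^{\phi_n}_t(x) \le u(t_n+t,x)$ for all $(t,x) \in Q$, where $u^{\phi_n}$ is the solution of \eqref{e_evol} with data $\phi_n$ — equivalently the measure-data solution with initial condition $\phi_n\,dx$, by Lemma~\ref{lemma_integ_pde}(b). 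Then I would let $n \to \infty$: Lemma~\ref{lemma_stability}(a) together with translation invariance of \eqref{e_pde} gives $u^{\phi_n}_t(x) \to u^{\lambda\delta_z}_t(x) = u^\lambda_t(x-z)$ (locally uniformly), while $u(t_n+t,x) \to u(t,x)$ by continuity, so $u^\lambda_t(x-z) \le u(t,x)$ on $Q$. Sending $\lambda \to \infty$ yields $u^\infty_t(x-z) \le u(t,x)$, and the heat-kernel lower bound \eqref{e_uinf_lwrbound} then finishes the proof with $c_{\ref{prop_pointlowerbound}} = c_{\ref{lemma_heatkernellowerbd}}$.

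I expect the only genuinely delicate points to be (i) arranging $\rho_n \to 0$, so that the approximating measures concentrate at the single point $z$ rather than spreading over the rest of the (possibly large) singular set of $u$, and (ii) justifying the comparison step for the merely measurable — not continuous — initial datum $\phi_n$; both are dispatched by Remark~\ref{remark_integbounded} and Remark~\ref{remark_comp_prin}. Everything else is bookkeeping essentially identical to Lemma~\ref{lemma_umuinf_lwrdbd}.
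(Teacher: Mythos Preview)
Your proposal is correct and follows essentially the same approach as the paper's proof: construct $\phi_n = u_{t_n}1_{B(z,\rho_n)}$ with $\int\phi_n = \lambda$ and $(t_n,\rho_n) \to (0,0)$, use the comparison principle for bounded initial data to get $u^{\phi_n}_t(x) \le u(t_n+t,x)$, pass to the limit via Lemma~\ref{lemma_stability}(a), and then let $\lambda \to \infty$ and invoke \eqref{e_uinf_lwrbound}. You are in fact slightly more explicit than the paper about why $\rho_n \to 0$ can be arranged and about why the time-shift $u(t_n+\cdot,\cdot)$ coincides with the $\cB^+_b$-data solution so that Remark~\ref{remark_comp_prin} applies.
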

\begin{proof} Suppose that $z \in \cS$, the singular set of $u(t,x)$. Let $\lambda > 0$. Since \eqref{e_inittrace_singular2} holds, there must be sequences $(t_n)_{n \geq 1}$ and $(\rho_n)_{n \geq 1}$ such that $t_n, \rho_n \to 0$ and 
\begin{equation}
\int_{B(z,\rho_n)} u(t_n,x) \, dx = \lambda. \nonumber
\end{equation}
Let $\phi_n(x) = 1_{B(z,\rho_n)}(x) u(t_n,x)$. We then have
\[ u(t_n,x) \geq \phi_n(x)\]
for all $x \in \R^d$. Since $u(t_n,\cdot)$ and $\phi_n$ are both bounded (because $u \in \cU$), it follows from the comparison principle (recall Remark~\ref{remark_comp_prin}) that
\begin{equation} \label{e_tracemono}
u(t_n + t,x) \geq u^{\phi_n}(t,x)
\end{equation}
for all $(t,x) \in Q$. Note that $\phi_n \to \lambda\delta_z$ in $\cM_F(\R^d)$, so by Lemma~\ref{lemma_stability}(a) and translation invariance, $u^{\phi_n}(t,x) \to u^\lambda(t,x-z)$. (Recall that $u^\lambda_t(x) = u^{\lambda \delta_0}(t,x)$.) Of course, $\lim_{n \to \infty} u(t_n + t,x) = u(t,x)$, and so taking $n \to \infty$ in \eqref{e_tracemono} we obtain
\[ u(t,x) \geq u^\lambda(t,x-z).\]
Since this holds for all $\lambda >0$, it follows that $u(t,x) \geq u^{\infty}(t,x-z)$. The second inequality in the result then follows from \eqref{e_uinf_lwrbound}.
\end{proof}

\begin{proof}[Proof of Theorem~\ref{thm_nonexistence}]
Let $\cS$ and $\mu$ be as in the statement of the theorem. Suppose that $u_t(x)$ is a solution to \eqref{e_pde} in $\cU$ whose singular set contains $\cS$. By Proposition~\ref{prop_pointlowerbound}, for some constant $c>0$ we have
\begin{align} \label{e_nonex1}
u_t(x) &\geq \sup_{z \in \cS} c t^{-\frac 1 \beta} p_1(t^{-\frac 1 \alpha}(x-z)) \nonumber
\\  &= c t^{-\frac 1 \beta} p_1(t^{-\frac 1 \alpha}d(x,\cS)), 
\end{align}
where the second equality holds because $\cS$ is closed and $p_1$ is continuous and radially decreasing. By assumption, $\cS$ supports a finite measure $\mu$ such that $\lim_{\lambda \to \infty} u^{\lambda \mu}_t = U_t$. Fix $\lambda > 0$. By \eqref{e_heatcomparison},
\begin{align} \label{e_nonex2}
u^{\lambda \mu}_t(x) &\leq \lambda S_t \mu (x)\nonumber
\\ & \leq \lambda \mu(1) p_t(d(x,\cS)) \nonumber
\\ &= \lambda \mu(1) t^{-\frac d \alpha} p_1(t^{-\frac 1 \alpha}d(x,\cS)).
\end{align}
Since $\beta < \frac \alpha d$, \eqref{e_nonex1} and \eqref{e_nonex2} imply that there exists $t_0(\lambda) > 0$ such that 
\begin{equation}
u^{\lambda \mu}_t(x) \leq u_t(x) \,\, \text{  for all } x \in \R^d \text{ and } t \in (0, t_0(\lambda)].
\end{equation}
Applying the comparison principle (as in Remark~\ref{remark_comp_prin}) at time $t_0(\lambda)$, it follows that 
\begin{equation}
u^{\lambda \mu}_t(x) \leq u_t(x) \,\, \text{  for all } (t,x) \in Q.
\end{equation}
The above holds for all $\lambda > 0$. Since $\lim_{\lambda \to \infty} u^{\lambda \mu}_t = U_t$ and $u \in \cU$, it follows that $u_t(x) = U_t$.\end{proof}


It remains to prove our existence result, Theorem~\ref{thm_inittrace}(b), which states that if $\cS \subset \R^d$ is compact and $\cS = \text{supp}(\mu)$ for $\mu \in \cM_F(\R^d)$ satisfying (F2)-$s$ for some $s < d_{\text{sat}}$, then there exists a weak solution of \eqref{e_pde} with initial trace $(\cS,0)$ if $\beta^*(\alpha,s) < \beta < \frac \alpha d$. This solution is $u^{\infty \mu}_t$.

\begin{proof}[Proof of Theorem~\ref{thm_inittrace}(b)] Let $\mu \in \cM_F(\R^d)$ satisfy (F2)-$s$ and have compact support $\cS$. By Lemma~\ref{lemma_limsol}, $u^{\infty \mu}_t(x)$ is a weak solution to \eqref{e_pde} and $\cS$ is contained in the singular set of $u^{\infty \mu}_t$. By \eqref{e_thmnonflat_upperbound}, $u^{\infty \mu}_t(x)$ vanishes uniformly on $\{ x : d(x,\cS) \geq \epsilon \}$ as $t \downarrow 0$ for any $\epsilon > 0$. It follows that for any $\xi \in C_c(\cS^c)$, 
\[ \lim_{t\to 0} \int \xi(x) u^{\infty \mu}_t(x) dx = 0.\] 
Hence the singular set of $u^{\infty \mu}_t$ is no larger than $\cS$ and \eqref{e_inittrace_measure} holds with measure $\nu = 0$, which implies that $u^{\infty \mu}_t$ has initial trace $(\cS,0)$. \end{proof}

\vspace{0.5 cm}

\noindent \textbf{Acknowledgements.} This work is part of the author's PhD dissertation that the University of British Columbia. The author thanks his supervisor, Ed Perkins, for many useful discussions, including suggestions which led to the proof of Theorem~\ref{thm_positiveEverwhere}, and for giving close readings of the manuscript. The author also thanks Leonid Mytnik for encouraging him to read \cite{CVW2016}, which was the starting point for this work.

\end{document}